\newcounter{mypcounter}
\numberwithin{equation}{section}
\begin{document}

\theoremstyle{plain} 
\newtheorem{theorem}{Theorem}[section]
\newtheorem{proposition}[theorem]{Proposition}
\newtheorem{lemma}[theorem]{Lemma}
\newtheorem{corollary}[theorem]{Corollary}

\renewcommand{\themypcounter}{\Roman{mypcounter}}
\newtheorem{problem}[mypcounter]{Problem}
\theoremstyle{definition} 
\newtheorem{definition}[theorem]{Definition}

\theoremstyle{remark} 
\newtheorem{remark}[theorem]{Remark}
\newtheorem*{example}{Example}

\renewcommand{\thesection}{\arabic{section}}

\newcommand{\mi}{\ensuremath{\mathrm{i}}}
\newcommand{\me}{\ensuremath{\mathrm{e}}}
\newcommand{\mPS}{\ensuremath{(\Omega,\mathcal{F},P)}}
\newcommand{\abs}[1]{\left| #1 \right|}
\newcommand{\id}{\operatorname{I}}

\newcommand{\Ito}{It\^o}
\newcommand{\cadlag}{c\`adl\`ag}
\newcommand{\Levy}{L\'evy}
\newcommand{\Kac}{Kac}
\newcommand{\Oksendal}{\O{}ksendal}
\newcommand{\Lebesgue}{Lebesgue}
\newcommand{\Fatou}{Fatou}
\newcommand{\Erdelyi}{Erd\'elyi}
\newcommand{\norm}[1]{\lVert #1 \rVert}
\newcommand{\diff}[1]{\operatorname{d}\ifthenelse{\equal{#1}{}}{\,}{\!#1}}
\newcommand{\E}{\operatorname{E}}
\newcommand{\charop}{\operatorname{\mathbb{L}}} 
\newcommand{\KummerM}{\operatorname{M}}
\newcommand{\KummerU}{\operatorname{U}}
\newcommand{\WhittakerM}{\operatorname{M}}
\newcommand{\WhittakerW}{\operatorname{W}}
\newcommand{\WeberD}{\operatorname{D}}
\newcommand{\HermiteH}{\operatorname{H}}
\newcommand{\cP}[2]{\ensuremath{P \left( #1 \mid #2 \right)}}
\newcommand{\cE}[2]{\ensuremath{\operatorname{E} \left( #1 \mid #2
    \right)}}
\newcommand{\cov}{\operatorname{Cov}}
\newcommand{\ind}{1{\hskip -2.5 pt}\hbox{\textnormal{I}}}
\newcommand{\on}[1]{\operatorname{#1} }

\newcommand{\C}{\mathbb{C}} 
\newcommand{\R}{\mathbb{R}}
\newcommand{\Q}{\mathbb{Q}} 
\newcommand{\N}{\mathbb{N}}

\title[Optimal Convergence Rates and One-Term Edgeworth Expansions]{Optimal Convergence Rates and One-Term Edgeworth Expansions for
  Multidimensional Functionals of Gaussian Fields}
\author{Simon Campese}
\date{\today}
\address{Universit\'e du Luxembourg\\ Facult\'e des Sciences, de la Technologie
  et de la Communication\\ Unit\'e de Recherche en Math\'emathiques\\ \\ 6, rue
  Coudenhove Kalergi \\ 1359 Luxembourg}
\email{simon.campese@uni.lu}
 \thanks{Part of this research was supported by the Fonds National de
    la Recherche Luxembourg under the accompanying measure AM2b}
  \keywords{Malliavin calculus, Stein's method, Gaussian approximation,
    Edgeworth expansion, optimality, multiple integral, contraction} 
  \subjclass[2010]{60F05, 62E20, 60H07}
\maketitle
\nobibliography*

\begin{abstract}
  We develop techniques for determining the exact asymptotic speed
  of convergence in the multidimensional normal approximation of
  smooth functions of Gaussian fields. As a by-product, our findings
  yield exact limits and often give rise to one-term generalized
  Edgeworth expansions increasing   the speed of convergence. Our main
  mathematical tools are Malliavin calculus, 
  Stein's method and the Fourth Moment Theorem. 
  This work can be seen as an extension of the results
  of~\cite{1196.60034} to the multi-dimensional case, with the notable 
  difference that in our framework covariances are allowed to
  fluctuate. We apply our findings to exploding functionals of
  Brownian sheets, vectors of Toeplitz quadratic functionals and the
  Breuer-Major Theorem.
\end{abstract}

\section{Introduction}
\label{s-2}

Let $X$ be an isonormal Gaussian process on some real, separable
Hilbert space $\mathcal{H}$ and $(F_n)$ be a sequence of centered,
real-valued functionals of $X$ with converging 
covariances. Moreover, assume that $F_n \xrightarrow{\mathcal{L}} Z$,
where $Z$ is a centered Gaussian random variable and
$\xrightarrow{\mathcal{L}}$ denotes convergence in
law. In~\cite{MR2520122}, Nourdin and Peccati
used a combination of Stein's  method (see~\cite{pecnour_mall},  \cite{MR2732624}, \cite{MR2235448},
\cite{MR2235451}, \cite{MR882007}, \cite{MR0402873}) and Malliavin
calculus (see~\cite{pecnour_mall}, \cite{MR2200233}, \cite{MR1474726})
to derive the bound
\begin{equation}
  \label{eq:15}
  \abs{
    \E \left[ g(F_n) \right]
    -
    \E \left[ g(Z) \right]
  }
  \leq
  M(g)
  \,
  \varphi(F_n)
\end{equation}
and used it to prove estimates for several probabilistic distances
$d(F_n,Z)$ (among them the Fortet-Mourier, Kolmogorov and Wasserstein
distances). The quantity $\varphi(F_n)$ in the bound~\eqref{eq:15} is defined by 
\begin{equation*}
  \varphi(F_n) =
    \sqrt{
      \on{Var}
      \left\langle DF_n,-DL^{-1}F_n
      \right\rangle_{\mathfrak{H}} 
    }
    +
    \abs{
      \E \left[ F_n^2 \right] - \E \left[ Z^2 \right]
      },
\end{equation*}
where $g$ is a  sufficiently smooth function, $M(g)$ is a
constant depending on $g$ and the random variable
$\left\langle DF_n,-DL^{-1}F_n \right\rangle_{\mathfrak{H}}$ involves the
Malliavin derivative operator $D$  and the pseudo-inverse $L^{-1}$ of
the Ornstein Uhlenbeck generator $L$ (see for
example~\cite{pecnour_mall} or~\cite{MR2200233} for definitions).  

This approach was pushed further by the same two authors
in~\cite{1196.60034}. Disregarding technicalities, they showed that if
\begin{equation*}
  \left(
    F_n
    ,
    \frac{\left\langle DF_n,-DL^{-1}F_n
      \right\rangle_{\mathfrak{H}}
    -
    \E
    \left\langle DF_n,-DL^{-1}F_n
      \right\rangle_{\mathfrak{H}}
   }{\sqrt{\on{Var} \left\langle DF_n,-DL^{-1} F_n \right\rangle_{\mathfrak{H}}}}
  \right)
\end{equation*}
jointly converges in law to a Gaussian random vector $\left( Z_1,Z_2 \right)$,
it holds that 
\begin{equation}
  \label{eq:22}
  \frac{P(F_n \leq z) - \Phi(z)}{\varphi(F_n)}
  =
  \frac{\E \left[ 1_{(-\infty,z]}(F_n) \right] - \Phi(z)}{\varphi(F_n)} \to \frac{\rho}{3} \Phi^{(3)}(z),
\end{equation}
where $\rho = \E \left[ Z_1 Z_2 \right]$ and $\Phi$ is
the cumulative distribution function of the Gaussian random variable
$Z$ and $\Phi^{(3)}$ denotes its third derivative, thus providing exact asymptotics for the difference $P(F_n \leq z) - \Phi(z)$.

Recently, Nourdin, Peccati and R\'eveillac showed in~\cite{1196.60035}
that the bound~\eqref{eq:15} also has a multidimensional version. It
can still be written as
\begin{equation}
  \label{eq:13}
  \abs{
    \E \left[ g(F_n) \right]
    -
    \E \left[ g(Z) \right]
  }
  \leq
  M(g)
  \,
  \varphi(F_n)
\end{equation}
but now the functionals $F_n$ and the normal $Z$ are $\R^d$-valued and
the function $g$ has to be in $\mathcal{C}^2(\R^d)$ with bounded
first and second derivatives. Moreover, the quantities
$\varphi(F_n)$ are now given by
$\varphi(F_n)=\Delta_{\Gamma}(F_n)+\Delta_C(F_n)$, where
\begin{equation*}
  \Delta_{\Gamma}(F_n)
  =
  \sqrt{\sum_{i,j=1}^d \on{Var} \Gamma_{ij}(F_n)},
\end{equation*}
\begin{equation*}
  \Delta_{C}(F_n)
  =
  \sqrt{\sum_{i,j=1}^d \left( \E \left[ F_{i,n} F_{j,n} \right] - \E
      \left[ Z_i Z_j \right] \right)^2}
\end{equation*}
and $\Gamma_{ij}(F_n) = \left\langle DF_{i,n},-DL^{-1}
  F_{j,n} \right\rangle_{\mathfrak{H}}$.
As every Lipschitz function can be approximated by $\mathcal{C}^2$
functions with bounded derivatives up to order two, \eqref{eq:13}
yields an upper bound for the Wasserstein-distance
(see~\cite{1196.60035}), which, to the knowledge of the author, is the strongest
distance that has been achieved via an approach based on Stein's method (see the
discussion before Theorem~4 in~\cite{MR2453473}). One should note that, using
methods of Malliavin calculus, it is 
however possible to prove that, in several cases, the central limit
theorems implied by the bound~\eqref{eq:13} take place in the total
variation distance (see~\cite[Theorem
5.2]{nourdin_convergence_2012}).
One should also note that another bound for the difference on the left
hand side of~\eqref{eq:15} is given by the maximum of the third and 
fourth cumulants of $F_n$ (see~\cite{bierme_bonami_nourdin_peccati}) and that
this bound is in fact optimal in total variation distance, if the sequence
$(F_n)$ lives in a fixed Wiener chaos (see~\cite{nourdin2013optimal}). 

The main result of this paper is Theorem~\ref{thm:2}, which provides
exact asymptotics for the difference
\begin{equation}
  \label{eq:49}
  \frac{
    \E \left[ g(F_n) \right] - \E \left[ g(Z_n) \right]
  }{
    \varphi(F_n)
  },
\end{equation}
where $(Z_n)$ is a sequence of Gaussian random vectors that has 
the same covariance structure as $(F_n)$. Analogously 
to the one-dimensional case, the random sequences $\left(
  F_n,\widetilde{\Gamma}_{ij}(F_n) \right)_{n \geq 1}$, where
$\widetilde{\Gamma}_{ij}(F_n)$ is a normalized version of
$\Gamma_{ij}(F_n)$, will play a crucial role.

Assuming converging covariances, we are able to obtain an exact and
explicit limit for the quantity~\eqref{eq:49}, where the Gaussian
sequence $(Z_n)$ is replaced by a single Gaussian vector $Z$. This is
Theorem~\ref{thm:5}, which can be seen as a multi-dimensional analogue
to~\eqref{eq:22}. As a by-product, we obtain the optimality  of
$\varphi(F_n)$ for  the Wasserstein  distance $d_W$, by   which we
mean the existence of positive constants~$c_1$ and $c_2$ such
that  
\begin{equation*} 
  c_1 \leq \frac{d_W(F_n,Z)}{\varphi(F_n)} \leq c_2
\end{equation*}
for $n \geq n_0$.
Note that the mere existence of these constants is not hard to
prove. Indeed, a suitable upper bound $c_2$ can always obtained
from~\eqref{eq:13} and by choosing $g$ in~\eqref{eq:13} to depend only
on one coordinate, the problem of finding lower bounds can essentially
be reduced to the one-dimensional findings of~\cite{1196.60034}.

Taking these results a step further, we provide one-term
generalized Edgeworth expansions that speed up the convergence of $(\E
\left[   g(F_n) \right] - \E \left[ g(Z_n) \right])$ (or the
respective sequence with $Z_n$ replaced by $Z$ in the converging
variances case) towards zero. 

As an important special case, we apply Theorems~\ref{thm:2}
and~\ref{thm:5} and their implications to random sequences $(F_n)$,
whose components are elements of some Wiener chaos (that can vary by
component). In this case, the sufficient conditions for our results
simplify substantially and can exclusively be expressed in terms of
contractions of the respective kernels (or even cumulants in the case
of the second chaos). In many cases, the only contractions
one has to look at are those where all kernels are taken from the 
same component of $F_n$, in the spirit of part (B) of the Fourth
Moment Theorem~\ref{thm:7}.

The remainder of the paper is organized as follows. In the preliminary
Section~\ref{s-7}, we introduce the necessary mathematical theory
and gather some results from the existing literature. 
Our main results in a general framemork are presented in
Section~\ref{s-3}. In the following Section~\ref{s-6}, these results
are then specialized to the case where all components of $(F_n)$ are
multiple integrals. We conclude by applying our methods to several
examples, namely step functions, exploding integrals of Brownian
sheets, continuous time Toeplitz quadratic functionals and the
Breuer-Major Theorem.

\section{Preliminaries}
\label{s-7}

\subsection{Metrics for probability measures and asymptotic normality}
\label{s-13}

We fix a positive integer $d$ and denote by $\mathcal{P}(\R^d)$ the
set of all probability measures on $\R^d$. If $X$ is a $\R^d$-valued
random vector, we denote its law by $P_X$.
If $(P_n) \subset \mathcal{P}(\R^d)$ is a sequence of probability
measures, weakly converging to some limit $P$, we can always find an
almost surely converging sequence $(X_n)$ of $\R^{d}$-valued random
vectors, such that $X_n$ has law $P_n$. This is the well-known
Skorokhod representation theorem, which we will state here for
convenience.

\begin{theorem}[Skorokhod representation theorem, \cite{MR0084897}]
  \label{thm:12}
  Let $(P_n)_{n \geq 0} \subset \mathcal{P}(\R^d)$ be a a sequence of
  probability 
  measures such that $P_n \xrightarrow{\mathcal{L}} P_0$. Then there
  exists a sequence $(X_n)_{n \geq 0}$ of $\R^d$-valued random
  vectors, defined on some common probability space
  $(\Omega^{\ast},\mathcal{F}^{\ast},P^{\ast})$, such that $P_{X_n} =  P_n$ and
  $X_n \to X$ $P$-almost surely.
\end{theorem}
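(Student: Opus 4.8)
The plan is to realize every $P_n$ and the limit on one probability space by a coupling built from nested partitions of $\R^d$, following the classical successive-refinement argument. In dimension one there is a shortcut worth recording: on $([0,1),\mathcal{B}([0,1)),\lambda)$ with $\lambda$ Lebesgue measure and $U$ the identity, set $X_n = F_n^{-1}(U)$ and $X = F_0^{-1}(U)$, where $F_n^{-1}(u) = \inf\{x : F_n(x) \ge u\}$ is the quantile function of $P_n$; then $X_n$ has law $P_n$, and since $F_n \to F_0$ at every continuity point of $F_0$ one checks $F_n^{-1}(u) \to F_0^{-1}(u)$ for all but countably many $u$, giving $P^{\ast}$-almost sure convergence. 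For general $d$ there is no single quantile function, so I would instead use the partition coupling below, which works in any separable metric space and in particular on $\R^d$.

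First I would fix, for each resolution level $m \in \N$, a countable Borel partition $\{A^m_i\}_i$ of $\R^d$ into sets of diameter at most $2^{-m}$, chosen so that each $A^m_i$ is a $P_0$-continuity set, i.e. $P_0(\partial A^m_i) = 0$, and so that the level-$(m+1)$ partition refines the level-$m$ partition. Such partitions exist: cover $\R^d$ by a dyadic grid of half-open cubes of side $2^{-m}/\sqrt d$ and translate the grid by a generic vector, which makes every cube face $P_0$-null because, in each coordinate, only countably many hyperplane positions can carry positive $P_0$-mass. Discarding empty cells and relabelling yields the required nested sequence of partitions into small $P_0$-continuity sets.

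Next I would invoke the Portmanteau theorem: since $P_n \xrightarrow{\mathcal{L}} P_0$ and each $A^m_i$ is a $P_0$-continuity set, $P_n(A^m_i) \to P_0(A^m_i)$ for every $m$ and $i$. I would then build the coupling on $(\Omega^{\ast},\mathcal{F}^{\ast},P^{\ast}) = ([0,1),\mathcal{B}([0,1)),\lambda)$. Fixing an ordering of the cells at each level in which the children of a given level-$m$ cell are listed consecutively, assign to the $i$-th cell of level $m$ a subinterval $I^m_i$ of length $P_0(A^m_i)$; the consecutive-children ordering forces $I^{m+1} \subset I^m$, so the nesting of intervals mirrors the refinement of cells. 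For $u \in [0,1)$ the cells containing $u$ shrink to a single point, defining $X(u)$, whose pushforward assigns each $A^m_i$ mass $P_0(A^m_i)$ and hence has law $P_0$. Repeating the construction with the masses $P_n(A^m_i)$, and the same cell ordering, produces intervals $J^m_i$ and a map $X_n$ with law $P_n$.

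For convergence, fix $m$ and a cell index $i$: the endpoints of $J^m_i$ are finite partial sums of $\{P_n(A^m_j)\}$, so they converge to the endpoints of $I^m_i$ as $n \to \infty$. Thus for every $u$ outside the countable set $E_m$ of level-$m$ interval endpoints, $u$ lies in the interior of some $I^m_i$ and, for all large $n$, also in $J^m_i$, whence $X_n(u)$ and $X(u)$ share the cell $A^m_i$ and $\abs{X_n(u) - X(u)} \le 2^{-m}$; taking $E = \bigcup_m E_m$, which is null, we get $X_n(u) \to X(u)$ for all $u \notin E$, i.e. $P^{\ast}$-almost surely. The main obstacle is the partition construction: the sets must \emph{simultaneously} be nested, have vanishing diameter, and be $P_0$-continuity sets, so that Portmanteau delivers convergence of their masses. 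Everything else — the nesting of the assigned intervals, the identification of the pushforward law via the cells generating the Borel $\sigma$-algebra, and the telescoping bound above — is routine bookkeeping once the partitions are in hand.
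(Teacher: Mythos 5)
The paper does not prove this statement at all: Theorem~\ref{thm:12} is quoted verbatim from the literature (it is Skorokhod's theorem, cited as \cite{MR0084897}) and serves only as background for Theorem~\ref{thm:10} and Definition~\ref{def:1}. So there is no internal proof to compare against; your proposal has to be judged on its own. On that basis it is essentially correct: it is the classical nested-partition coupling (the Billingsley-style proof, valid in any Polish space), and all the main ingredients are in place --- the translated dyadic grids giving nested partitions into $P_0$-continuity sets of diameter at most $2^{-m}$, the Portmanteau theorem giving $P_n(A^m_i) \to P_0(A^m_i)$, the hierarchical interval assignment on $([0,1),\lambda)$, and the telescoping bound $\abs{X_n(u)-X(u)} \le 2^{-m}$ off the countable set of interval endpoints.

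One step you dismiss as ``routine bookkeeping'' genuinely needs an argument: the identification of the laws of $X$ and $X_n$. Since the cells are half-open, the point $X(u)$ defined by the shrinking cells containing $u$ lies only in the \emph{closure} $\overline{A^m_{i}}$ of each such cell, and it can land on the boundary, hence outside the cell itself (in dimension one, take $u$ whose cells are $[\tfrac12 - 2^{-m},\tfrac12)$; then $X(u)=\tfrac12$ belongs to the neighbouring cell). So ``the pushforward assigns each $A^m_i$ mass $P_0(A^m_i)$'' is not immediate. The standard repair: for every closed $F$, the set $\{u : X(u)\in F\}$ is covered by the intervals of cells whose closures meet $F$, giving $Q(F) \le P_0(\overline{F^{2^{-m}}})$ and, letting $m\to\infty$, $Q(F)\le P_0(F)$, where $Q$ is the law of $X$; this forces $Q(\partial A^m_i)\le P_0(\partial A^m_i)=0$, so no mass actually leaks and $Q(A^m_i)=\lambda(I^m_i)=P_0(A^m_i)$ after all, whence $Q=P_0$ on a generating $\pi$-system. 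The same argument is needed for $X_n$, where there is an additional wrinkle: the cells are $P_0$-continuity sets but in general \emph{not} $P_n$-continuity sets, so one concludes via $Q_n(F)\le P_n(F)$ for all closed $F$ and agreement on closed $P_n$-continuity sets, rather than via the cells directly. With this supplement your proof is complete; the rest (existence of the generic translation, convergence of the finitely many partial sums forming the interval endpoints, and the final diagonal bound) is correct as written.
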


Given a metric $\gamma$ on $\mathcal{P}(\R^d)$, we say that $\gamma$
metrizes the weak convergence on $\mathcal{P}(\R^d)$, if for all $P \in
\mathcal{P}(\R^d)$ and sequences $(P_n) \subseteq \mathcal{P}(\R^d)$
the following equivalence holds:
\begin{equation*}
  \gamma(P_n,P) \to 0
  \quad \Leftrightarrow \quad
  P_n \xrightarrow{\mathcal{L}} P.
\end{equation*}
Two prominent examples are the
Prokhorov metric $\rho$ 
and the Fortet-Mourier metric $\beta$, defined by 
\begin{equation*}
  \rho(P,Q) =
  \inf \left\{
    \varepsilon > 0 \colon P(A) \leq Q(A^{\varepsilon}) + \varepsilon
    \quad \text{for every Borel set $A \subseteq \R^d$}
  \right\},
\end{equation*}
 and
\begin{equation*}
  \beta(P,Q) =
  \sup \left\{ \,
    \abs{
      \int f \diff{\, (P-Q)}
    }
    \colon
    \norm{f}_{\infty} + \norm{f}_L \leq 1
    \,
  \right\}.
\end{equation*}
Here, $A^{\varepsilon} = \{ x \colon
\norm{x-y} \leq \varepsilon$ for some $y \in A \}$, $\norm{\cdot}$ is the
$\varepsilon$-hull with respect to the Euclidean norm and $\norm{\cdot}_L$
denotes the Lipschitz seminorm. 
For double sequences of probability measures whose elements are asymptotically close
with respect to one of 
these two metrics, a result similar to the Skorokhod
Representation Theorem~\ref{thm:12} holds.

\begin{theorem}[\cite{MR1932358}, Th. 11.7.1]
  \label{thm:10}
  Let $(P_n)_{n \geq 1}$, $(Q_n)_{n \geq 1} \subseteq
  \mathcal{P}(\R^d)$ be two sequences of probability measures. Then
  the following three conditions are equivalent.
  \begin{enumerate}[a)]
  \item $
    \beta \left( P_n,Q_n \right) \to 0
    $
  \item $
    \rho \left( P_n,Q_n \right) \to 0
    $
  \item There exist two sequences $(X_n)$ and
    $(Y_n)$ of $\R^d$-valued random vectors, defined
    on some common probabilty space
    $(\Omega^{\ast},\mathcal{F}^{\ast},P^{\ast})$, such that
    $P_{X_n} = P_n$ and $P_{Y_n} = Q_n$ for $n
    \geq 1$ and $X_n - Y_n \to 0$ $P$-almost surely.
  \end{enumerate}
\end{theorem}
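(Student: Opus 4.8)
The plan is to establish the substantive equivalence (b) $\Leftrightarrow$ (c) directly, and then to recover (a) $\Leftrightarrow$ (b) from the classical two-sided comparison between the Prokhorov and Fortet--Mourier metrics. Throughout one works on the Polish space $\R^d$, so that regular conditional distributions exist and every law on $\R^d$ is the image of Lebesgue measure on $[0,1]$ under a measurable map; this is what makes the explicit couplings below possible.

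The two implications out of (c) are the routine ones. Suppose $(X_n)$ and $(Y_n)$ are couplings on a common space with $X_n - Y_n \to 0$ $P^{\ast}$-almost surely, so that $\norm{X_n - Y_n} \to 0$ in probability. For (c) $\Rightarrow$ (a), any $f$ with $\norm{f}_{\infty} + \norm{f}_{L} \leq 1$ obeys $\abs{f(x) - f(y)} \leq \min(\norm{x-y},2)$, whence $\abs{\E f(X_n) - \E f(Y_n)} \leq \E[\min(\norm{X_n - Y_n},2)]$; the right-hand side tends to $0$ by dominated convergence and does not depend on $f$, so taking the supremum gives $\beta(P_n,Q_n) \to 0$. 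For (c) $\Rightarrow$ (b), introduce the Ky Fan quantity $\alpha(X,Y) = \inf\{\varepsilon > 0 \colon P^{\ast}(\norm{X-Y} > \varepsilon) \leq \varepsilon\}$; convergence in probability forces $\alpha(X_n,Y_n) \to 0$, and for closed $A$ and $\varepsilon = \alpha(X_n,Y_n)$ the inclusion $\{X_n \in A\} \subseteq \{Y_n \in A^{\varepsilon}\} \cup \{\norm{X_n - Y_n} > \varepsilon\}$ yields $P_n(A) \leq Q_n(A^{\varepsilon}) + \varepsilon$, that is, $\rho(P_n,Q_n) \leq \alpha(X_n,Y_n) \to 0$.

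The heart of the matter is (b) $\Rightarrow$ (c). Put $\varepsilon_n = \rho(P_n,Q_n) + 1/n \to 0$. Strassen's theorem turns the metric bound into a coupling statement: for each $n$ there is a law $\mu_n$ on $\R^d \times \R^d$ with marginals $P_n$ and $Q_n$ such that $p_n := \mu_n(B_n) \leq \varepsilon_n$, where $B_n = \{(x,y) \colon \norm{x-y} > \varepsilon_n\}$. It remains to realise all the $\mu_n$ simultaneously on one probability space in such a way that the in-probability control becomes almost sure convergence. The naive independent (product) realisation is doomed, since $\sum_n p_n$ may diverge and the second Borel--Cantelli lemma would then make the exceptional events occur infinitely often. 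The device that repairs this is to drive all exceptional events by a single source of randomness: on $\Omega^{\ast} = [0,1] \times [0,1]^{\N}$ equipped with the product of Lebesgue measures, carry one uniform variable $U$ together with independent uniforms $(V_n)$, call index $n$ exceptional on the event $\{U > 1 - p_n\}$, and on the exceptional (respectively ordinary) event generate $(X_n,Y_n)$ from the regular conditional law $\mu_n(\,\cdot \mid B_n)$ (respectively $\mu_n(\,\cdot \mid B_n^{c})$) as a measurable function of $V_n$. By construction each pair $(X_n,Y_n)$ has law exactly $\mu_n$, so all marginals are correct.

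The payoff of the common variable $U$ is that, for a fixed $u < 1$, index $n$ is exceptional only when $p_n > 1 - u$; as $p_n \leq \varepsilon_n \to 0$, this occurs for at most finitely many $n$. Hence almost surely all but finitely many indices are ordinary, and on ordinary indices $\norm{X_n - Y_n} \leq \varepsilon_n \to 0$, so $X_n - Y_n \to 0$ $P^{\ast}$-almost surely, which is precisely (c). I expect this step---replacing independence of the exceptional events by their maximal positive coupling through $U$---to be the only genuinely delicate point; Strassen's theorem and the existence of measurable representations are standard inputs, and everything else is bookkeeping. To close the circle it remains to note (a) $\Leftrightarrow$ (b), which is immediate from comparison inequalities of the form $\beta \leq 2\rho$ and $\rho \leq c\,\sqrt{\beta}$ relating the Fortet--Mourier and Prokhorov metrics: these hold uniformly in the measures, so convergence to zero in one metric forces it in the other.
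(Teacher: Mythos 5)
The paper offers no proof of this statement; it is quoted as a known result from Dudley's book (Theorem 11.7.1 of the cited reference), so the only meaningful comparison is with the proof given there. Your argument is correct and is essentially that standard proof: the two implications out of (c) are the routine Lipschitz/Ky Fan estimates, the equivalence of $\beta$ and $\rho$ follows from the uniform two-sided comparison inequalities, and the substantive step (b) $\Rightarrow$ (c) is handled exactly as in the cited source --- Strassen's theorem produces couplings $\mu_n$ with $\mu_n\left(\left\{ \lVert x-y\rVert > \varepsilon_n \right\}\right) \leq \varepsilon_n$, and the crucial device of generating all exceptional events from a single uniform variable $U$ (so that they are nested intervals $\left(1-p_n,1\right]$ whose limsup is null, rather than independent events to which the second Borel--Cantelli lemma would apply) upgrades the in-probability control to almost sure convergence on one common space.
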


Note that the Skorokhod Representation Theorem~\ref{thm:12} is not a
simple corollary of Theorem~\ref{thm:10}. Also, the distances $\beta$
and $\rho$ can not easily be replaced by other metrics
(see~\cite[p.418]{MR1932358} for details and counterexamples). Theorem~\ref{thm:10} is the 
motivation for our following definition of asymptotically close
normality. 

\begin{definition}
  \label{def:1}
  Let $(X_n)$ be a sequence of $\R^d$-valued
  random vectors with finite first and second moments. We 
  say that $(X_n)$ is \emph{asymptotically close to
    normal} (in short: \emph{ACN}), if 
  \begin{equation*}
      \beta(P_{X_n},P_{Z_n}) \to 0,
  \end{equation*}
  where the probabilty measures $P_{Z_n}$ are laws of $d$-dimensional
  Gaussian random variables $Z_n$, 
  whose first and second moments coincide with those of $X_n$.
\end{definition}

Note that we consider (almost
surely) constant random vectors as being ``degenerated''
Gaussians. Thus, by the above definition, all sequences of random
vectors whose second moments eventually vanish are ACN.
By Theorem~\ref{thm:10}, we could of course replace the Fortet-Mourier
metric $\beta$ with the Prokhorov metric $\rho$. It is clear that if
$(X_n)$ is ACN, the same is true for any of its components
$(X_{i,n})$. Furthermore, if all first and second moments of $(X_n)$
converge (or, as a special case, are equal), being ACN is equivalent to
converging in law to a Gaussian random variable $Z$ (with the limiting moments
as parameters). Indeed, the 
triangle inequality gives
\begin{equation*}
  \rho(P_{X_n},P_Z) \leq \rho(P_{X_n},P_{Z_n}) + \rho(P_{Z_n},P_Z).
\end{equation*}

We will use the following asymptotic notation for two positive
sequences $(a_n)$ and $(b_n)$ throughout the text. We write$(a_n)  \preccurlyeq
(b_n)$, if there exists a positive constant $c$ such that $a_n \leq c \, b_n$
for $n \geq n_0$ and 
$(a_n) \asymp (b_n)$, if $(a_n) 
\preccurlyeq (b_n)$ and $(b_n) \preccurlyeq (a_n)$ holds. For brevity, we often  
drop the braces and just write $a_n \preccurlyeq b_n$, $a_n \asymp b_n$, etc.  

\subsection{Hermite polynomials, integration by parts and the transformation $U_{g,C}$}
\label{s-4}

Fix a positive integer $d$. Elements of the set $\N_0^d$, where $\N_0 = \N
\cup \{ 0 \}$, will be called ($d$-dimensional) multi-indices. We
define $\abs{\alpha}=\sum_{i=1}^d \alpha_i$ and call this sum the
order of $\alpha$. For a $d$-dimensional vector $x = (x_1,\ldots,x_d)
\in \R^d$, we define $x^{\alpha} := \prod_{i=1}^d x_i^{\alpha_i}$. 
Multi-indices of order one will sometimes be denoted by $e_i$,
where the index $i$ marks the position of the non-zero 
entry. Thus, for example, $x^{e_i}=x_i$.
It is 
clear that 
every multi-index $\alpha$ can be written as a sum of $\abs{\alpha}$
multi-indices $l_1,\ldots,l_{\abs{\alpha}}$ of order one, and that
this sum is unique up to the order of the summands. We will call the
set $\{ l_1,\ldots,l_{\abs{\alpha}}\}$  of these multi-indices the
\emph{elementary decomposition} of $\alpha$. 
For example, the elementary decomposition for the multi-index
$(2,0,1)$ is $\{ (1,0,0), (1,0,0), (0,0,1) \}$.

For any multi-index $\alpha$, the multidimensional \emph{Hermite
  polynomials} $H_{\alpha}(x,\mu,C)$ are defined by
\begin{equation}
  \label{eq:61}
  H_{\alpha}(x,\mu,C)
  =
  \frac{
    (-1)^{\abs{\alpha}}
    \partial_{\alpha} \phi_d(x,\mu,C)
  }{
    \phi_d(x,\mu,C)
  },
\end{equation}
where $\phi_d(x,\mu,C)$ denotes the density of a $d$-dimensional
Gaussian random variable with mean vector $\mu$ and positive definite
covariance matrix $C$ (see for example~\cite[Section 5.4]{MR907286}). Note that in
the case $\mu=0$ and $d=C=1$, this
definition yields the well known one-dimensional Hermite polynomials.
The first few multidimensional Hermite polynomials are given by
$H_0(x,\mu,C) = 1$, 
\begin{align*}
  H_{e_i}(x,\mu,C) &= \sum_{k=1}^d c_{ik} \, (x_k-\mu_k)
  \intertext{and}
  H_{e_i+e_j}(x,\mu,C) &= H_{e_i}(x,\mu,C) \, H_{e_j}(x,\mu,C) - c_{ij},
\end{align*}
where $1 \leq i,j \leq d$ and $C^{-1} = (c_{ij})_{1 \leq i,j \leq d}$ denotes
the inverse of $C$. 

The polynomial $H_{\alpha}(x,\mu,C)$ is of order
$\abs{\alpha}$ and one can show that for fixed $\mu$ and $C$, the
family~$\{ H_{\alpha}(x,\mu,C) \colon \alpha \in \N_0^d \}$ is
orthogonal in $L^2(\R^d,\phi_d(x,\mu,C))$. Furthermore, by integration
by parts, we obtain the identity
\begin{equation}
  \label{eq:64}
    \E \left[
      \partial_{i} f(Z) \, H_{\alpha}(Z,\mu,C)
    \right]
    =
    \E \left[
      f(Z)
      \,
      H_{\alpha+e_i}(Z,\mu,C)
    \right],
\end{equation}
where $1 \leq i \leq d$, $\alpha \in \N_0^d$, $f \in \on{Lip}(\R^d)$
with at most polynomial growth and $Z$ is a Gaussian random 
variable with mean $\mu$ and covariance matrix $C$. Note that the left
hand side of~\eqref{eq:64} is well-defined by Rademacher's
theorem, which guarantees the differentiability of the Lipschitz continuous
function $f$ almost everywhere.
We will also make use of another integration by parts formula, which
can be verified by direct calculation, namely
\begin{equation}
  \label{eq:10}
  \E \left[ f(Z) \, Z_i \right]
  =
  \sum_{j=1}^d \E \left[ Z_i Z_j \right] \E \left[ \partial_j f(Z) \right],
\end{equation}
where $1 \leq i \leq d$, $f$ as above and $Z$ a $d$-dimensional
Gaussian random variable (with possibly singular covariance
matrix). 

For a given Lipschitz function $g \colon \R^d \to \R$ and a positive
semi-definite and symmetric matrix $C$ of dimension $d \times d$, we
define $U_{g,C} \colon \R^d \to \R$ by
\begin{equation}
  \label{eq:9}
  U_{g,C}(x)
  =
  \int_a^b
  \frac{\upsilon'(t)}{\upsilon(t)}
  \left(
  \E \left[
    g(N)
    \right]
    -
    \E \left[ 
    g \left(
      \upsilon(t) x + \sqrt{1-\upsilon^2(t)} N
    \right)
  \right]
  \right)
  \diff{t},  
\end{equation}
where $N$ is a $d$-dimensional centered Gaussian random variable with covariance 
$C$, $-\infty \leq a < b \leq \infty$ and $\upsilon \colon (a,b) \to
(0,1)$ is a diffeomorphism with $\lim_{t \to a+} \upsilon(t) = 0$ (and
therefore $\lim_{t \to b-} \upsilon(t) = 1$). From the change of variables
$\upsilon(t)=s$,
we see that $U_{g,C}$ does not depend on the particular choice of  
$\upsilon$ and by choosing $\upsilon(t)=\me^{-t}$ on the interval $(0,\infty)$, we can write
\begin{equation*}
  U_{g,C}(x) = \int_0^{\infty} \big( P_tg(x) - P_{\infty}g(x) \big) \diff{t},
\end{equation*}
where $P_t g(x) = \E \left[ g \left( \me^{-t}x + \sqrt{1-\me^{-2t}} N
  \right) \right]$, $P_{\infty} g(x) := \lim_{t \to \infty} P_t
g(x) = \E \left[ g(N) \right]$ and $N$ is defined as above. The
operators $P_t$ form the well-known Ornstein-Uhlenbeck semigroup on $\R^d$
(see~\cite[Chapter 1]{pecnour_mall} for details).

Before stating some properties of $U_{g,C}$, let us introduce some
more notation. If $f
\in C^k(\mathbb{R}^d)$ and $\alpha \in \N_0^d$ is a multi-index with
elementary decomposition $\{ l_1,l_2,\ldots,l_{\abs{\alpha}} \}$, we
write  $\partial_{\alpha} f$ or $\partial_{l_1l_2\cdots l_{\abs{\alpha}}} f$ instead
of the more cumbersome $\frac{\partial^{\abs{\alpha}} f}{\partial x_{l_1} \partial x_{l_2}
\cdots \partial x_{l_{\abs{\alpha}}}}$.

\begin{lemma}
  \label{lem:10}
  Let $g \colon \R^d \to \R$ be a Lipschitz-function with at most
  polynomial growth. Furthermore, let $Z$ be a centered,
  $d$-dimensional Gaussian random variable with covariance matrix $C$
  and define $U_{g,C}$  via~\eqref{eq:9}.  Then the following is true.
\begin{enumerate}[a)]
\item The function $U_{g,C}$ satisfies the multidimensional Stein 
equation
\begin{equation*}
  \left\langle C, \on{Hess} U_{g,C}(x) \right\rangle_{\text{H.S.}}
  -
  \left\langle
    x, \nabla U_{g,C}(x)
  \right\rangle_{\R^d}
  =
  g(x)
  -
  \E \left[ g(Z) \right].
\end{equation*}
\item If $g$ is $k$-times differentiable with bounded
  derivatives up to order $k$, the same is true for $U_{g,C}$. In this
  case, for any $\alpha \in \N_0^d$ with $\abs{\alpha}\leq k$, the
  derivatives are given by 
  \begin{equation}
    \label{eq:65}
    \partial_{\alpha} U_{g,C}(x) =
    \int_a^b
    \upsilon'(t) \, \upsilon^{\abs{\alpha}-1}(t)
    \,
    \E \left[
      \partial_{\alpha} g \left(
        \upsilon(t) x
        +
        \sqrt{1-\upsilon^2(t)} N
      \right)
    \right]
    \diff{t}
  \end{equation}
  and it holds that
  \begin{equation}
  \label{eq:76}
  \abs{\partial_{\alpha} U_{g,C}(x)}
  \leq
  \frac{
      \norm{\partial_{\alpha} \, g}_{\infty}
    }{\abs{\alpha}}
  \end{equation}
  and
    \begin{equation}
    \label{eq:54}
    \E \left[ \partial_{\alpha} U_{g,C}(Z) \right]
    =
    \frac{1}{\abs{\alpha}}
    \E \left[ \partial_{\alpha} g(Z) \right].
  \end{equation}
\end{enumerate}  
\end{lemma}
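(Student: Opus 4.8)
The plan is to pass to the $\upsilon$-independent representation obtained from the change of variables $s=\upsilon(t)$, under which $U_{g,C}(x)=\int_0^1 s^{-1}\bigl(\E[g(sx+\sqrt{1-s^2}N)]-\E[g(N)]\bigr)\diff{s}$ (equivalently the Ornstein--Uhlenbeck form $\int_0^\infty(P_t g-P_\infty g)\diff{t}$), and to read off the whole of part~b) from a single differentiation under the integral. Writing $Y_s=sx+\sqrt{1-s^2}N$ and applying $\partial_\alpha$ to the integrand, each differentiation pulls a factor $s$ down from the argument $sx$, so for $\abs{\alpha}\geq 1$ the $x$-free term $\E[g(N)]$ drops out and one obtains $\partial_\alpha U_{g,C}(x)=\int_0^1 s^{\abs{\alpha}-1}\E[\partial_\alpha g(Y_s)]\diff{s}$, which is~\eqref{eq:65} after undoing the substitution. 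The interchange of $\partial_\alpha$ with the integral is justified by dominated convergence, the bounded derivative $\partial_\alpha g$ supplying the dominating function; this is also where the apparent singularity of the weight $s^{-1}$ at $s=0$ is seen to be absorbed by the factor $s^{\abs{\alpha}}$ produced by the differentiation.

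From this formula the two remaining assertions of part~b) are immediate. For the bound~\eqref{eq:76} I would pull $\norm{\partial_\alpha g}_\infty$ out of the integral and use $\int_0^1 s^{\abs{\alpha}-1}\diff{s}=1/\abs{\alpha}$. For~\eqref{eq:54}, the point is that if $Z$ has covariance $C$ and is taken independent of $N$, then $sZ+\sqrt{1-s^2}N$ is centered Gaussian with covariance $s^2 C+(1-s^2)C=C$, hence equal in law to $Z$; consequently $\E[\partial_\alpha g(sZ+\sqrt{1-s^2}N)]=\E[\partial_\alpha g(Z)]$ is independent of $s$ and factors out of $\int_0^1 s^{\abs{\alpha}-1}\diff{s}=1/\abs{\alpha}$.

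For the Stein equation in part~a) I would differentiate the inner expectation directly in the auxiliary variable. From $\tfrac{\diff{}}{\diff{s}}Y_s=x-\tfrac{s}{\sqrt{1-s^2}}N$ one gets $\tfrac{\diff{}}{\diff{s}}\E[g(Y_s)]=\E[\langle x,\nabla g(Y_s)\rangle_{\R^d}]-\tfrac{s}{\sqrt{1-s^2}}\E[\langle N,\nabla g(Y_s)\rangle_{\R^d}]$, and the second expectation is rewritten by the Gaussian integration-by-parts formula~\eqref{eq:10} applied to $N$: since $\partial_k g(Y_s)$ depends on $N$ only through $Y_s$, its $N$-gradient carries a factor $\sqrt{1-s^2}$, and one finds $\E[\langle N,\nabla g(Y_s)\rangle_{\R^d}]=\sqrt{1-s^2}\,\E[\langle C,\on{Hess} g(Y_s)\rangle_{\text{H.S.}}]$. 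The awkward factor $\sqrt{1-s^2}$ cancels, leaving $\tfrac{\diff{}}{\diff{s}}\E[g(Y_s)]=\E[\langle x,\nabla g(Y_s)\rangle_{\R^d}]-s\,\E[\langle C,\on{Hess} g(Y_s)\rangle_{\text{H.S.}}]$. Integrating over $s\in(0,1)$ and using $Y_1=x$, $Y_0=N$ and $N\stackrel{d}{=}Z$, the left-hand side telescopes to $g(x)-\E[g(Z)]$, while by the derivative formula of part~b) the right-hand side reproduces exactly the two terms $\langle x,\nabla U_{g,C}(x)\rangle_{\R^d}$ and $\langle C,\on{Hess} U_{g,C}(x)\rangle_{\text{H.S.}}$ of the Stein operator; collecting them (and fixing the overall sign via the orientation of $\upsilon$, see below) gives the identity of part~a).

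The routine steps (the explicit domination, the endpoint evaluations, and the index bookkeeping in the integration by parts) I would leave to direct computation. The genuine difficulty is analytic rather than algebraic, and twofold. First, for merely Lipschitz $g$ the derivatives $\partial_\alpha g$ exist only almost everywhere, so the higher-order formulas of part~b) and the Hessian term in part~a) are not literally meaningful; the remedy is that for $s<1$ the Gaussian convolution $Y_s$ smooths $g$, whence $s\mapsto\E[g(Y_s)]$ is smooth and~\eqref{eq:10} is legitimate at fixed $s$, after which one works with $C^2$ (resp.\ $C^k$) functions and passes to the Lipschitz case by mollification, invoking Rademacher's theorem as in the remark following~\eqref{eq:64}. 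Second, and this is the step most prone to error, one must track the signs and the orientation of the parametrising diffeomorphism $\upsilon$ (equivalently the direction of the telescoping in the fundamental theorem of calculus), since these fix the sign appearing on the right-hand sides of~\eqref{eq:65}, \eqref{eq:54} and the Stein equation.
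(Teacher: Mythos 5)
Your proof of part b) coincides with the paper's: the paper also obtains \eqref{eq:65} by repeated differentiation under the integral sign (the first differentiation justified by the Lipschitz property of $g$), notes that \eqref{eq:76} is an immediate consequence, and proves \eqref{eq:54} by exactly your invariance argument, namely that $\upsilon(t)Z+\sqrt{1-\upsilon^2(t)}N$ has the same law as $Z$; your passage to the substituted variable $s=\upsilon(t)$ is purely cosmetic. The genuine difference is part a): the paper offers no argument and simply cites~\cite{1196.60035}, while you reprove the Stein equation by differentiating the interpolation $s\mapsto\E\left[g(Y_s)\right]$, converting $\E\left[\left\langle N,\nabla g(Y_s)\right\rangle_{\R^d}\right]$ into $\sqrt{1-s^2}\,\E\left[\left\langle C,\on{Hess} g(Y_s)\right\rangle_{\text{H.S.}}\right]$ via the Gaussian integration by parts \eqref{eq:10}, and telescoping over $s\in(0,1)$. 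That computation is correct and buys self-containedness; what it costs is exactly the technical work you flag yourself: for merely Lipschitz $g$ the Hessian manipulation requires mollification (or the smoothing effect of the Gaussian convolution, which in turn needs $C$ nondegenerate), and this is precisely what the paper's citation outsources.

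Concerning the sign bookkeeping you postpone: the tension is real, but it lies in the paper, not in your argument. Differentiating \eqref{eq:9} literally, with the stated orientation $\lim_{t\to a+}\upsilon(t)=0$, produces \eqref{eq:65} with an additional minus sign; your choice of the Ornstein--Uhlenbeck representation $\int_0^{\infty}\left(P_tg-P_{\infty}g\right)\diff{t}$ instead makes \eqref{eq:65}, \eqref{eq:76} and \eqref{eq:54} come out as printed, but then the telescoping in part a) actually yields $\left\langle x,\nabla U_{g,C}(x)\right\rangle_{\R^d}-\left\langle C,\on{Hess} U_{g,C}(x)\right\rangle_{\text{H.S.}}=g(x)-\E\left[g(Z)\right]$, i.e.\ part a) with flipped sign. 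No orientation convention makes part a), \eqref{eq:65} and \eqref{eq:54} hold simultaneously with the printed signs, and the paper's own proof suffers from the same inconsistency: it evaluates $\int_a^b\upsilon'(t)\upsilon^{\abs{\alpha}-1}(t)\diff{t}=1/\abs{\alpha}$ using the increasing orientation, under which its own \eqref{eq:65} is off by a sign relative to \eqref{eq:9}. So identifying the orientation of $\upsilon$ as the one delicate point was the right call, and your proposal is as correct as the statement permits.
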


\begin{proof}
  For a proof of part a) see~\cite{1196.60035}.  
  Repeated differentiation under the integral
  sign (the first one being justified by the Lipschitz property of $g$) shows
  formula~\eqref{eq:65}, of which the bound~\eqref{eq:76} 
  is an immediate consequence. To show~\eqref{eq:54}, we again use
  formula~\eqref{eq:65} and the fact that $\upsilon(t)Z +
  \sqrt{1-\upsilon^2(t)}N$ has the same law as $Z$. This gives
  \begin{align*}
    \E \left[ \partial_{\alpha} U_{g,C}(Z) \right]
    &=
    \int_a^b
    \upsilon'(t) \upsilon^{\abs{\alpha}-1}(t)
    \E \left[
      \partial_{\alpha} g \left(
        \upsilon(t) Z + \sqrt{1-\upsilon^2(t)}N
      \right)
    \right]
    \diff{t}
    \\ &=
    \int_a^b
    \upsilon'(t) \upsilon^{\abs{\alpha}-1}(t)
    \diff{t}
    \E \left[
      \partial_{\alpha} g \left( Z \right)
    \right]
    \\ &=
    \frac{1}{\abs{\alpha}}
    \E \left[
      \partial_{\alpha} g \left( Z \right)
    \right].
  \end{align*}
\end{proof}

\subsection{Isonormal Gaussian processes and Wiener chaos}
\label{s-14}

For a detailed discussion of the notions introduced in this section,
we refer to~\cite{pecnour_mall} or~\cite{MR2200233}.

Fix a real separable Hilbert space
$\mathfrak{H}$ and a family $X = \{ X(h) \colon h \in \mathfrak{H}\}$
of centered Gaussian random variables, defined on some probability
space $(\Omega,\mathcal{F},P)$, such that the isometry property $\E
\left[ X(g) X(h) \right] = \left\langle g,h
\right\rangle_{\mathfrak{H}}$ holds for $g,h \in \mathfrak{H}$. Such
a family $X$ is called an \emph{isonormal Gaussian process} over
$\mathfrak{H}$. Without loss of generality, we assume that the
$\sigma$-field $\mathcal{F}$ is generated by $X$.
For $q \geq 1$, we denote the $q$th tensor product of $\mathfrak{H}$ by
$\mathfrak{H}^{\otimes q}$ and the $q$th symmetric tensor product of
$\mathfrak{H}$ by  $\mathfrak{H}^{\odot q}$. Furthermore, we define
$\mathcal{H}_q$, the \emph{Wiener chaos of order $q$} (with respect to
$X$), to be the closed linear subspace of  $L^2(\Omega,\mathcal{F},P)$
generated by the set $\{ H_q(X(h)) \colon h \in \mathfrak{H}, \, 
\norm{h}_{\mathfrak{H}}=1 \}$, where $H_q(x)=H_q(x,1)$ denotes the $q$th
Hermite polynomial, defined by~\eqref{eq:61}. The mapping $I_q(h^{\otimes
  q}) = q! H_q(X(h))$, where~$\norm{h}_{\mathfrak{H}}=1$, can be extended to a
  linear isometry between the symmetric tensor product $\mathfrak{H}^{\odot q}$, equipped with the
modified norm $\sqrt{q!} \, \norm{\cdot}_{\mathfrak{H}^{\otimes q}}$, and
the $q$th Wiener chaos $\mathcal{H}_q$.
Wiener chaoses of different orders are orthogonal. More precisely, if $f_i \in
\mathfrak{H}^{\odot   q_i}$ and $f_j \in \mathfrak{H}^{\odot q_j}$ for 
$q_i,q_j \geq 1$ it holds that
\begin{equation}
  \label{orthogonality}
  \E \left[ I_{q_i}(f_i) \, I_{q_j}(f_j) \right]
  =
  \begin{cases}
    q_i! \left\langle f_i,f_j \right\rangle_{\mathfrak{H}}
    &\quad \text{if $q_i=q_j$}
    \\
    0
    &\quad \text{if $q_i \neq q_j$}.
  \end{cases}
\end{equation}
Furthermore, the \emph{Wiener chaos decomposition} tells us that the
space $L^2(\Omega,\mathcal{F},P)$ can be decomposed into the infinite 
orthogonal sum of the $\mathcal{H}_q$. As a consequence, any
square-integrable random variable $F \in L^2(\Omega,\mathcal{F},P)$
can be written as
\begin{equation}
  \label{chaosexp}
  F = \E \left[ F \right] + \sum_{q=1}^{\infty} I_{q}(f_q).
\end{equation}
where the kernels $f_q \in \mathfrak{H}^{\odot q}$ are uniquely defined. This
identity is called the \emph{chaos expansion} of $F$.

If $\{ \psi_k \colon k \geq 1 \}$ is a complete
orthonormal system in $\mathfrak{H}$, $f_i \in \mathfrak{H}^{\odot
  q_i}$, $f_j \in \mathfrak{H}^{\odot q_j}$ and $r \in \{0,\ldots,q_i
\land q_j \}$, the \emph{contraction} $f_i \otimes_r f_j$
of $f_i$ and $f_j$ of order $r$ is the element of $\mathfrak{H}^{\otimes
  (q_i+q_j-2r)}$ defined by
\begin{equation}
  \label{eq:18}
  f_i \otimes_r f_j
  =
  \sum_{l_1, \ldots,l_r=1}^{\infty}
  \left\langle
    f_i,\psi_{l_1} \otimes \cdots \otimes \psi_{l_r}
  \right\rangle_{\mathfrak{H}^{\otimes r}}
  \otimes
  \left\langle
    f_j,\psi_{l_1} \otimes \cdots \otimes \psi_{l_r}
  \right\rangle_{\mathfrak{H}^{\otimes r}}.
\end{equation}
The contraction $f_i \otimes_r f_j$ is not necessarily
symmetric. We denote its canonical symmetrization by $f_i \widetilde{\otimes}_r f_j \in
\mathfrak{H}^{\odot q_i+q_j-2r}$. Note that $f_i \otimes_0 f_j$ is
equal to the usual tensor product $f_i \otimes f_j$ of $f_i$ and
$f_j$. Furthermore, if $q_i=q_j$, we have that $f_i \otimes_{q_i} f_j
= \left\langle f_i,f_j \right\rangle_{\mathfrak{H}^{\otimes q_i}}$.
The well known multiplication formula 
\begin{equation}
  \label{prodform}
  I_{q_i}(f_i) \, I_{q_j}(f_j)
  =
  \sum_{r=0}^{q_i \land q_j}
  \beta_{q_i,q_j}(r) \,
  I_{q_i+q_j-2r}(f_i \widetilde{\otimes}_r f_j),
\end{equation}
where 
\begin{equation}
  \label{eq:86}
  \beta_{a,b}(r) = r! \binom{a}{r} \binom{b}{r},
\end{equation}
gives us the chaos expansion of the product of two multiple integrals.

When $\mathfrak{H} = L^2(A,\mathcal{A},\nu)$, where $(A,\mathcal{A})$
is a Polish space, $\mathcal{A}$ is the associated Borel
$\sigma$-field and the measure $\mu$ is positive, $\sigma$-finite and
non-atomic, one can identify the
symmetric tensor product $\mathfrak{H}^{\odot q}$ with the Hilbert
space $L^2_s(A^q,\mathcal{A}^q,\nu^{\otimes q})$, which is defined as
the collection of all $\nu^{\otimes q}$-almost everywhere symmetric functions an
$A^q$, that are 
square-integrable with respect to the product measure $\nu^{\otimes
  q}$. In this case, the random variable $I_q(h)$, $h \in
\mathfrak{H}^{\odot q}$, coinicides with the multiple Wiener-It\^o
integral of order $q$ of $h$ with respect to the Gaussian measure $B
\mapsto X(1_B)$, where $B \in \mathcal{A}$ and $\nu(A) <
\infty$. Furthermore, the contraction~\eqref{eq:18} can be written as
\begin{multline}
  \label{eq:20}
  (f_i \otimes_r f_j) (t_1,\ldots,t_{q_i+q_j-2r})
  \\ =
  \int_{A^r}
  f_i(t_1,\ldots,t_{q_i-r},s_1,\ldots,s_r)
  f_j(t_{q_i-r+1},\ldots,t_{q_i+q_j-2r},s_1,\ldots,s_r)
  \\
  \diff{\nu(s_1)} \ldots \diff{\nu(s_r)}.
\end{multline}

\subsection{Operators from Malliavin calculus}
\label{s-10}

In this section, we introduce the operators $D$, $L$ and $L^{-1}$
from Malliavin calculus, which will appear in the statements of our
main results. This exposition is by no means complete, most notably, we
do not introduce the divergence operator. Again, we refer
to~\cite{pecnour_mall} or~\cite{MR2200233} for a full discussion.

If $\mathcal{S}$ is the set of all cylindrical random variables of
the type
\begin{equation*}
  F = g(X(h_1),\ldots,X(h_k)),
\end{equation*}
where $k \geq 1$, $h_i \in \mathfrak{H}$ for $1 \leq i \leq k$ and
$g \colon \R^k \to \R$ is an infinitely differentiable function with
compact support, the Malliavin derivative $DF$ with respect to $X$
is the element of $L^2(\Omega,\mathfrak{H})$ defined by
\begin{equation*}
  DF = \sum_{i=1}^k \partial_i (X(h_1),\ldots,X(h_k)) h_i.
\end{equation*}
Iterating this procedure, we obtain higher derivatives $D^mF$ for any
$m \geq 2$, which are elements of $L^2(\Omega,\mathfrak{H}^{\odot
  m})$.
For $m,p \geq 1$, $\mathbb{D}^{m,p}$ denotes the closure of
$\mathcal{S}$ with respect to the norm $\norm{\cdot}_{m,p}$, which is
defined by
\begin{equation*}
  \norm{F}_{m,p}^p = \E \left[ \abs{F}^p \right]
    +
    \sum_{i=1}^m
    \E \left[ \norm{D^i F}_{\mathfrak{H}^{\otimes i}}^p \right].
\end{equation*}
If $\mathfrak{H}=L^2(A,\mathcal{A},\nu)$, with $\nu$ non-atomic, the
Malliavin derivative of a random variable $F$ having the chaos
expansion~\eqref{chaosexp} can be identified with the element of
$L^2(A \times \Omega)$ given by
\begin{equation}
  \label{eq:84}
  D_t F = \sum_{q=1}^{\infty} qI_{q-1}(f_q(\cdot,t)), \quad t \in A.
\end{equation}

The \emph{Ornstein-Uhlenbeck generator} $L$ is defined by $L =
\sum_{q=0}^{\infty} -qJ_q$. Here, $J_q$ denotes the orthogonal
projection onto the $q$th Wiener chaos. The domain of $L$ is
$\mathbb{D}^{2,2}$. Similarly, we define its pseudo-inverse $L^{-1}$
by $L^{-1} = \sum_{q=1}^{\infty} - \frac{1}{q} J_q$. This
pseudo-inverse is defined on $L^2(\Omega)$ and for any $F \in
L^2(\Omega)$ it holds that $L^{-1}F$ lies in the domain of $L$. The
name pseudo-inverse is justified by the relation
\begin{equation*}
  L L^{-1} F = F - \E \left[ F \right],
\end{equation*}
valid for any $F \in L^2(\Omega)$.

\subsection{Cumulants}
\label{s-16}

Recall the multi-index notation introduced in the first paragraph of
Section~\ref{s-4}. 

Let $F=(F_1,\ldots,F_d)$ be a $\R^d$-valued random vector. For a 
multi-index $\alpha$, we set $F^{\alpha} = \prod_{k=1}^d
F_k^{\alpha_k}$ and, with slight abuse of notation, $\abs{F}^{\alpha} =
\prod_{k=1}^d \abs{F_k}^{\alpha_k}$. The moments $\mu_{\alpha}(F)$ of
$F$ of order $\abs{\alpha}$ are then defined by $\mu_{\alpha}(F) = \E \left[
  F^{\alpha} \right]$, provided that the expectation on the right hand
side is finite. Analogously, one defines the absolute moments 
$\mu_{\alpha}(\abs{F})$.
We denote by $\phi_{F}(t)=\E \left[ \exp \left( \mathrm{i}
    \left\langle t,F  \right\rangle_{\R^d} \right) \right]$ the
characteristic function 
of $F$. If $\mu_{\alpha}(\abs{F})<\infty$, the \emph{joint cumulant}
$\kappa_\alpha(F)$ of order $\abs{\alpha}$ of $F$ is defined by
\begin{equation*}
  \kappa_{\alpha}(F) = (-\mathrm{i})^{\abs{\alpha}} \partial_{\alpha} \log
  \phi_F(t) |_{t=0}.
\end{equation*}
Given all joint cumulants $\kappa_{\alpha}(F)$ up to some order exist,
we can compute the moments up to the same order by Leonov and Shiryaev's
formula (see \cite[Proposition 3.2.1]{MR2791919})
\begin{equation}
  \label{eq:31}
  \mu_{\alpha}(F)
  =
  \sum_{\pi}
  \kappa_{b_1}(F) \kappa_{b_2}(F) \cdots \kappa_{b_m}(F),
\end{equation}
where the sum is taken over all partitions $\pi=\{B_1,\ldots,B_m \}$
of the elementary decomposition of $\alpha$ and the multi-indices
$b_k$ are defined by $b_k= \sum_{l_j \in B_k} l_j$. For example, if
$1 \leq i,j,k \leq d$, we get
$\mu_{e_i}(F) = \kappa_{e_i}(F)$,
$\mu_{e_i+e_j}(F)
  =
  \kappa_{e_i+e_j}(F) + \kappa_{e_i}(F) \kappa_{e_j}(F) 
$
and
\begin{multline*}
  \mu_{e_i+e_j+e_k}(F)
  =
  \kappa_{e_i+e_j+e_k}(F)
  \\
  + \kappa_{e_i}(F) \kappa_{e_j+e_k}(F)
  + \kappa_{e_j}(F) \kappa_{e_i+e_k}(F)
  + \kappa_{e_k}(F) \kappa_{e_i+e_j}(F)
  \\
  + \kappa_{e_i}(F) \kappa_{e_j}(F) \kappa_{e_k}(F)
\end{multline*}
for the moments of order one, two and three, respectively. Note that
if $F$ is centered, all moments of order less than four coincide with
the respective cumulants.

\subsection{Generalized Edgeworth expansions}
\label{s-5}

Let $F_1$ and $F_2$ be two $\R^d$-valued random vectors with
finite absolute moments up to some order $m \in \N_0 \cup
\{\infty\}$ and consider the problem of approximating $F_1$ in terms
of $F_2$. The classical Edgeworth expansion provides such an 
approximation in terms of formal ``moments'', which we will now describe. 

For every multi-index $\alpha$ of order at most $m$, we
define formal ``cumulants'' $\widetilde{\kappa}_{\alpha}(F_1,F_2)$ by 
$\widetilde{\kappa}_{\alpha}(F_1,F_2) = \kappa_{\alpha}(F_1) -
\kappa_{\alpha}(F_2)$ and use Shiryaev's formula~\eqref{eq:31} to
define corresponding formal ``moments''
$\widetilde{\mu}_{\alpha}(F_1,F_2)$. Two things are important to note
at this point. In general, $\widetilde{\mu}_{\alpha}(F_1,F_2) \neq
\mu_{\alpha}(F_1) - \mu_{\alpha}(F_2)$ and the collection $\{
\widetilde{\kappa}_{\alpha}(F_1,F_{2}) \colon \abs{\alpha} \leq m \}$
can not be represented as cumulants associated with some random
variable. If we now assume that $F_1$ and $F_2$ both have densities, 
say $f_1$ and $f_2$, the classical Edgeworth
expansion of order $m$ for the density $f_1$ then reads 
\begin{equation}
  \label{eq:56}
  f_1(x) \sim f_2(x) +
  \sum_{1 \leq \abs{\alpha} \leq m}
  \frac{(-1)^{\abs{\alpha}}}{\abs{\alpha}!}
  \
  \widetilde{\mu}_{\alpha}(F_1,F_2)
  \partial_{\alpha} f_2(x).
\end{equation}

In the most prominent example where this is the case, $F_1$ is a
normalized sum of iid random variables and $F_2$ is Gaussian. In this
case, the Edgeworth expansion can be used to improve the speed of
convergence in the classical central limit theorem.  For details, we
refer to~\cite{MR1145237}, \cite[Chapter5]{MR907286} and~\cite{MR855460}.

For our framework, however, the classical Edgeworth expansion is too
rigid, as we cannot assume the existence of
(smooth) densities. Therefore, instead of expanding the density $f_1$
in terms of $f_2$ and its derivatives, we pass to the distributional
operators 
$g  \mapsto \E \left[ g(F_1) \right]$ and $g \mapsto \E \left[ g(F_2)
\right]$, defined on the space of infinitely differentiable functions 
with compact support. The expansion~\eqref{eq:56} becomes 
\begin{equation}
  \label{eq:58}
  \E \left[ g(F_1) \right]
  \sim
  \E \left[ g(F_2) \right]
  +
  \sum_{1 \leq \abs{\alpha} \leq m}
  \frac{
    \widetilde{\mu}_{\alpha}(F_1,F_2)
  }{
    \abs{\alpha}!
  }
  \,
  \E \left[ \partial_{\alpha}g(F_2) \right].
\end{equation}
Note that in the case of existing smooth densities it holds that
$\E \left[ g(F_1) \right] = \int_{-\infty}^{\infty} g(x) f_1(x)
\diff{x}$
and, by integration by parts,
\begin{equation*}
  \E \left[ \partial_{\alpha} g(F_2) \right]
  =
  \int_{-\infty}^{\infty} \partial_{\alpha} g(x) f_2(x) \diff{x}
  =
  (-1)^{\abs{\alpha}}
  \int_{-\infty}^{\infty} g(x) \partial_{\alpha} f_2(x) \diff{x},
\end{equation*}
so that~\eqref{eq:58} is obtained in a natural way from~\eqref{eq:56},
by multiplying with the test function $g$ and integrating on both
sides.  

This leads us to the following definition of a generalized Edgeworth
expansion.

\begin{definition}[Generalized Edgeworth expansion]
  \label{def:3}
If $g$ is $m$-times
differentiable and has bounded derivatives up to order $m$,
we define the \emph{generalized $m$th order Edgeworth  
  expansion} $\mathcal{E}_m(F_1,F_2,g)$ of $\E \left[ 
  g(F_1) \right]$ 
around $\E \left[ g(F_2) \right]$ by 
\begin{equation}
  \label{eq:21}
  \mathcal{E}_m(F_1,F_2,g)
  =
  \E \left[ g(F_2) \right]
  +
  \sum_{1 \leq \abs{\alpha} \leq m}
  \frac{
      \widetilde{\mu}_{\alpha}(F_1,F_2)    
  }{\abs{\alpha}!}
  \E \left[ \partial_{\alpha} g(F_2) \right].
\end{equation}  
\end{definition}

If $Z$ is a $d$-dimensional centered normal with covariance
matrix $C$ (the case that we will exclusively consider in the sequel),
formula~\eqref{eq:64} yields
\begin{equation*}
  \mathcal{E}_m(F_1,Z,g)
  =
  \E \left[ g(Z) \right]
  +
  \sum_{1 \leq \abs{\alpha} \leq m}
  \frac{
      \widetilde{\mu}_{\alpha}(F_1,F_2)
  }{\abs{\alpha}!}
  \E \left[ g(Z) H_{\alpha}(Z,C) \right],
\end{equation*}
where the Hermite polynomials $H_{\alpha}(x,C)$ are defined
by~\eqref{eq:61} (recall our convention that we drop the mean as an
argument if it is zero). 

\subsection{Cumulant formulas for chaotic random vectors}
\label{s-12}

When dealing with functionals of an isonormal Gaussian process, their
cumulants can be generalized in terms of Malliavin operators. This
(among other things) is the content of~\cite{MR2606872}
and~\cite{MR2793872} (see  also~\cite[Chapter8]{pecnour_mall}), which
we will summarize here.

Let $F=(F_1,\ldots,F_d)$ be a $\R^d$-valued random vector whose
components are functionals of some isonormal Gaussian process $X$ and
let $l_1,l_2,\ldots$ be a sequence of $d$-dimensional multi-indices of
order one.
If $F^{l_1} \in \mathbb{D}^{1,2}$, we set $\Gamma_{l_1}(F) =
F_i$. Inductively, if $\Gamma_{l_1,l_2,\ldots,l_k}(F)$ is a
well-defined element of $L^2(\Omega)$ for some $k \geq 1$, we define
\begin{equation}
  \label{eq:55}
  \Gamma_{l_1,\ldots,l_{k+1}}(F) =
  \left\langle
     DF_{l_{k+1}}
    ,
    -D L^{-1} \Gamma_{l_1,\ldots,l_k}(F)
  \right\rangle_{\mathfrak{H}}.
\end{equation}
The question of existence is answered by the following lemma.

\begin{lemma}[Noreddine, Nourdin~\cite{MR2793872}]
  \label{lem:3}
  With the notation as above, fix an integer $j \geq 1$ and assume
  that $F_i \in \mathbb{D}^{j,2^j}$ for $1 \leq i \leq d$. Then it
  holds that $\Gamma_{l_1,\ldots,l_k}(F)$ is a well-defined element of
  $\mathbb{D}^{j-k+1,2^{j-k+1}}$ for all $k=1,\ldots,j$. In
  particular, $\Gamma_{l_1,\ldots,l_j}(F) \in \mathbb{D}^{1,2} \subset
  L^2(\Omega)$ and the quantity $\E \left[ \Gamma_{l_1,\ldots,l_j}(F)
  \right]$ is well-defined and finite. 
\end{lemma}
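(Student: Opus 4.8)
The plan is to prove the claim by finite induction on $k$, running from $k=1$ up to $k=j$, the inductive statement being exactly that $\Gamma_{l_1,\ldots,l_k}(F)\in\mathbb{D}^{j-k+1,2^{j-k+1}}$. The base case $k=1$ is immediate: by definition $\Gamma_{l_1}(F)=F_i$ where $e_i=l_1$, and the hypothesis $F_i\in\mathbb{D}^{j,2^j}=\mathbb{D}^{j-1+1,2^{j-1+1}}$ is precisely the desired membership. Once the full range $k=1,\ldots,j$ is established, the ``in particular'' assertion follows by specializing to $k=j$, which gives $\Gamma_{l_1,\ldots,l_j}(F)\in\mathbb{D}^{1,2}\subset L^2(\Omega)$, so that its expectation is well-defined and finite.

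For the inductive step, suppose $1\le k\le j-1$ and that $G:=\Gamma_{l_1,\ldots,l_k}(F)\in\mathbb{D}^{j-k+1,2^{j-k+1}}$. I must show that $\Gamma_{l_1,\ldots,l_{k+1}}(F)=\langle DF_{l_{k+1}},-DL^{-1}G\rangle_{\mathfrak{H}}$ lies in $\mathbb{D}^{j-k,2^{j-k}}$, and I would first record the regularity of the two $\mathfrak{H}$-valued factors separately. Since $F_{l_{k+1}}$ is one of the components $F_i$, it lies in $\mathbb{D}^{j,2^j}$, and because $D$ lowers the order of differentiability by one, $DF_{l_{k+1}}\in\mathbb{D}^{j-1,2^j}(\mathfrak{H})$. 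For the second factor I would use that $L^{-1}$ is regularizing on the Malliavin--Sobolev scale: by Meyer's inequalities it maps $\mathbb{D}^{m,p}$ into $\mathbb{D}^{m+2,p}$, preserving the integrability exponent, so $L^{-1}G\in\mathbb{D}^{j-k+3,2^{j-k+1}}$ and hence $-DL^{-1}G\in\mathbb{D}^{j-k+2,2^{j-k+1}}(\mathfrak{H})$.

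Both $\mathfrak{H}$-valued factors therefore belong to $\mathbb{D}^{j-k,\cdot}(\mathfrak{H})$: indeed $DF_{l_{k+1}}\in\mathbb{D}^{j-1,2^j}(\mathfrak{H})\subseteq\mathbb{D}^{j-k,2^j}(\mathfrak{H})$ and $-DL^{-1}G\in\mathbb{D}^{j-k+2,2^{j-k+1}}(\mathfrak{H})\subseteq\mathbb{D}^{j-k,2^{j-k+1}}(\mathfrak{H})$ for $k\ge1$. I then invoke the Leibniz-type product rule for Malliavin--Sobolev spaces: if $u\in\mathbb{D}^{m,p}(\mathfrak{H})$ and $w\in\mathbb{D}^{m,q}(\mathfrak{H})$ with $\tfrac1s=\tfrac1p+\tfrac1q$, then $\langle u,w\rangle_{\mathfrak{H}}\in\mathbb{D}^{m,s}$. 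Taking $m=j-k$, $p=2^j$ and $q=2^{j-k+1}$ gives $\tfrac1s=\tfrac1{2^j}+\tfrac1{2^{j-k+1}}=(1+2^{k-1})/2^j$, so $s=2^j/(1+2^{k-1})$, and the elementary inequality $2^k\ge 2^{k-1}+1$ (valid for $k\ge1$) yields $s\ge 2^{j-k}$. Since higher integrability on a probability space only shrinks the Sobolev space, $\langle DF_{l_{k+1}},-DL^{-1}G\rangle_{\mathfrak{H}}\in\mathbb{D}^{j-k,s}\subseteq\mathbb{D}^{j-k,2^{j-k}}$, which closes the induction.

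The main obstacle is not the bookkeeping of the exponents---which, as shown, reduces to the single inequality $2^k\ge 2^{k-1}+1$---but the two pieces of abstract Malliavin machinery invoked along the way: the regularizing action of $L^{-1}$ and the boundedness of the inner-product (Leibniz) map between $\mathfrak{H}$-valued Sobolev spaces. Both ultimately rest on Meyer's inequalities, so the careful point is to state these in the $\mathfrak{H}$-valued, higher-order form required here and to verify that the integrability exponent never falls below the target $2^{j-k}$ at any stage of the iteration.
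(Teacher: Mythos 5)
The paper itself gives no proof of this lemma --- it is imported verbatim from Noreddine--Nourdin~\cite{MR2793872} --- and your argument is precisely the standard one from that source: finite induction on $k$, with $D$ costing one derivative, $L^{-1}$ gaining two (via Meyer's inequalities) while preserving the integrability index, and a H\"older--Leibniz rule for $\left\langle \cdot,\cdot \right\rangle_{\mathfrak{H}}$ closing the step; your exponent bookkeeping, $s = 2^j/(1+2^{k-1}) \geq 2^{j-k}$ together with the embedding $\mathbb{D}^{j-k,s} \subseteq \mathbb{D}^{j-k,2^{j-k}}$ on a probability space, is correct. One cosmetic remark: the inner-product (Leibniz) rule does not rest on Meyer's inequalities, only on the chain rule for $D$ plus H\"older; Meyer is needed solely for the regularizing property of $L^{-1}$.
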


Using these random elements, we can now state a formula for the
cumulants of $F$.

\begin{theorem}[Noreddine, Nourdin~\cite{MR2793872}]
  \label{thm:4}
  Let $\alpha$ be a $d$-dimensional multi-index with elementary
  decomposition $\{ l_1,\ldots,l_{\abs{\alpha}} \}$. If $F_i \in
  \mathbb{D}^{\abs{m},2^{\abs{m}}}$ for $1 \leq i \leq d$, then
  \begin{equation}
    \label{eq:60}
    \kappa_\alpha(F) = \sum_{\sigma} 
    \E \left[
      \Gamma_{l_1,l_{\sigma(2)},l_{\sigma(3)},\ldots,l_{\sigma(\alpha)}}(F)
    \right],
  \end{equation}
  where the sum is taken over all permutations $\sigma$ of the set $\{
  2,3,\ldots,\abs{\alpha} \}$.
\end{theorem}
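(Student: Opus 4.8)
The plan is to prove~\eqref{eq:60} by combining the recursive definition~\eqref{eq:55} of the $\Gamma$-operators with the standard Malliavin integration by parts formula, and then extracting the cumulants from the characteristic function $\phi_F(t)=\E[e^{\mathrm{i}\langle t,F\rangle_{\R^d}}]$. The first and central ingredient is the identity
\begin{equation*}
  \E[\phi(F)\,\Gamma_c(F)] - \E[\phi(F)]\,\E[\Gamma_c(F)]
  = \sum_{j=1}^d \E\big[\partial_j\phi(F)\,\Gamma_{c,e_j}(F)\big],
\end{equation*}
valid for any chain $c=(l_1,\ldots,l_k)$ of multi-indices of order one and any bounded, sufficiently smooth $\phi$, where $\Gamma_{c,e_j}$ denotes the chain $c$ with $e_j$ appended. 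This follows by writing the left-hand side as $\E[\phi(F)\,LL^{-1}\Gamma_c(F)]$ via the relation $LL^{-1}G=G-\E[G]$, then applying the duality between $D$ and $L$ together with the chain rule $D\phi(F)=\sum_j\partial_j\phi(F)\,DF_j$, and finally recognizing $\langle DF_j,-DL^{-1}\Gamma_c(F)\rangle_{\mathfrak{H}}=\Gamma_{c,e_j}(F)$ from~\eqref{eq:55}. Lemma~\ref{lem:3} guarantees that, under the integrability hypothesis, all chains of length up to $\abs{\alpha}$ are well-defined integrable elements of the relevant Sobolev spaces, so every expectation and every integration by parts is legitimate.

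Next I would specialize to $\phi(x)=e^{\mathrm{i}\langle t,x\rangle_{\R^d}}$, for which $\partial_j\phi=\mathrm{i}t_j\phi$. Setting $h_c(t)=\E[\Gamma_c(F)\,e^{\mathrm{i}\langle t,F\rangle_{\R^d}}]/\phi_F(t)$ (well-defined for $t$ near $0$, where $\phi_F\neq0$), the identity becomes the recursion
\begin{equation*}
  h_c(t) = \E[\Gamma_c(F)] + \sum_{j=1}^d \mathrm{i}t_j\,h_{c,e_j}(t).
\end{equation*}
On the other hand, from $\partial_i\phi_F=\phi_F\,\partial_i\log\phi_F$ and $\Gamma_{e_i}(F)=F_i$ one reads off $\partial_i\log\phi_F(t)=\mathrm{i}\,h_{(e_i)}(t)$, so that, writing $\alpha=l_1+\beta$ with $l_1=e_{i_1}$, we get $\partial_\alpha\log\phi_F=\mathrm{i}\,\partial_\beta h_{(l_1)}$. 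Differentiating the recursion by $\partial_\beta$ and evaluating at $t=0$ annihilates the constant term and, by the Leibniz rule, leaves from each summand $\mathrm{i}t_j\,h_{c,e_j}$ only the contribution in which $\partial_{e_j}$ hits the factor $t_j$ (every other term retains a surviving factor $t_j$ that vanishes at $0$). This produces the clean recursion
\begin{equation*}
  \partial_\beta h_{(l_1)}(0) = \mathrm{i}\sum_{j:\,\beta_j\geq1}\beta_j\,\partial_{\beta-e_j}h_{(l_1,e_j)}(0),
\end{equation*}
which one iterates down to $\beta=0$, where $h_c(0)=\E[\Gamma_c(F)]$.

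Finally I would collect the combinatorics. Iterating the last recursion yields $\mathrm{i}^{\abs{\alpha}-1}$ times a sum over all sequences of directions obtained by peeling off the elementary factors of $\beta$, each weighted by the product of the successive ``remaining multiplicity'' factors $\beta_j$. For a fixed resulting ordered chain, this product of multiplicities equals exactly the number of permutations $\sigma$ of $\{2,\ldots,\abs{\alpha}\}$ that reproduce that chain from $\{l_2,\ldots,l_{\abs{\alpha}}\}$, so the weighted sum over direction-sequences collapses into the unweighted sum over permutations $\sigma$ (with $l_1$ held fixed, exactly as in the statement). Combining with the prefactor $\mathrm{i}$ from $\partial_\alpha\log\phi_F=\mathrm{i}\,\partial_\beta h_{(l_1)}$ and the definition $\kappa_\alpha(F)=(-\mathrm{i})^{\abs{\alpha}}\partial_\alpha\log\phi_F(0)$, the powers of $\mathrm{i}$ cancel via $((-\mathrm{i})\cdot\mathrm{i})^{\abs{\alpha}}=1$, and~\eqref{eq:60} follows. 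I expect the main obstacle to be precisely this last bookkeeping step --- checking that the multiplicity weights generated by the Leibniz rule match the cardinalities of the permutation fibres --- together with the routine but necessary justification, via Lemma~\ref{lem:3}, that one may differentiate under the expectation sign throughout.
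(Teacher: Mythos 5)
The paper never proves this theorem: it is imported verbatim from Noreddine and Nourdin~\cite{MR2793872} (with the one-dimensional case credited to~\cite{MR2606872}), so there is no internal proof to compare against. Your argument is correct, and it is essentially a reconstruction of the cited source's proof: the Malliavin integration-by-parts identity
\begin{equation*}
  \E \left[ \phi(F)\, G \right] - \E \left[ \phi(F) \right] \E \left[ G \right]
  =
  \E \left[ \left\langle D\phi(F), -DL^{-1}G \right\rangle_{\mathfrak{H}} \right],
\end{equation*}
applied to $\phi = \me^{\mathrm{i}\left\langle t,\cdot\right\rangle_{\R^d}}$ and to the chains $G = \Gamma_c(F)$, yields exactly your recursion for $h_c$, and iterated differentiation at $t=0$ then produces the cumulants. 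The two delicate points both check out: first, the Leibniz weights along a fixed ordered chain multiply to $\prod_j \beta_j!$, which is precisely the number of permutations $\sigma$ of $\{2,\ldots,\abs{\alpha}\}$ whose image is that chain, so the weighted sum over chains collapses to the unweighted sum over $\sigma$, and the sign bookkeeping $(-\mathrm{i})^{\abs{\alpha}}\mathrm{i}^{\abs{\alpha}} = 1$ is right; second, the differentiations under the expectation sign are justified by the integrability provided by Lemma~\ref{lem:3} (note that the hypothesis should read $F_i \in \mathbb{D}^{\abs{\alpha},2^{\abs{\alpha}}}$; the $\abs{m}$ in the statement is a typo carried over from the paper).
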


We again stress that -- as the labeling of the elementary
decomposition is arbitrary -- we can freely choose the fixed first
element $l_1$. For the case $d=1$, this formula has been proven
in~\cite{MR2606872}. 

To simplify notation, we will frequently write $\Gamma_{i_1 i_2\cdots
  i_k}(F)$ instead of the more cumbersome
$\Gamma_{e_{i_1},e_{i_2},\ldots,e_{i_k}}(F)$. For example, the random variable 
$\Gamma_{e_1,e_2}(F) = \left\langle DF_1,-DL^{-1}F_2
\right\rangle_{\mathfrak{H}}$ will also be denoted by $\Gamma_{12}(F)$.

If all components of $F$ are elements
of (possibly different) Wiener chaoses, formula~\eqref{eq:60} can be
stated in terms of contractions. We state two special cases here
and refer to Noreddine and Nourdin~\cite{MR2793872} for a general
formula.
As a first special case, assume that $F_i=I_{q_i}(f_i)$ where $q_i
\geq 1$ and $f_i \in \mathfrak{H}^{\odot q_i}$ for $1 \leq i \leq
d$. In this case, for $1 \leq i,j,k \leq d$, the third-order cumulants
are given by 
\begin{equation}
  \label{eq:80}
  \kappa_{e_i+e_j+e_k} =
  \begin{cases}
    c \left\langle f_i \widetilde{\otimes}_r f_j,f_k
    \right\rangle_{\mathfrak{H}^{\otimes q_k}}
    \quad
    &\text{if $r := \frac{q_i+q_j-q_k}{2} \in \{1,2,\ldots,q_i \land q_j
      \}$,}
    \\
    0
    \quad
    &\text{otherwise,}
  \end{cases}
\end{equation}
where $c$ is some positive constant depending on the
chaotic orders $q_i$, $q_j$ and $q_k$.
As a second special case, assume that the components $F_i$ are all of
the form $F_i = I_2(f_i)$, with $f_i \in \mathfrak{H}^{\odot 2}$ for
$1 \leq i\leq d$. In this case, for any $\alpha \in \N_0^d$ with
$\abs{\alpha} \geq 2$ it holds that
\begin{equation}
  \label{eq:37}
  \kappa_{\alpha}(F)
  =
  2^{\abs{\alpha}-1}
  \sum_{\sigma}
  \left\langle
    ( \cdots (
    f_{i_1} \widetilde{\otimes}_1 f_{i_{\sigma(1)}}
    )
    \widetilde{\otimes}_1 f_{i_{\sigma(2)}}
    )
    \ldots
    )
    \widetilde{\otimes}_1
    f_{i_{\sigma(\abs{\alpha}-1)}}
    ,
    f_{i_{\sigma(\abs{\alpha})}}
  \right\rangle_{\mathfrak{H}^{\otimes 2}},
\end{equation}
where the sum is taken over all permutations $\sigma$ of the set $\{
2,3,\ldots,\abs{\alpha} \}$ and the indices
$i_1,\ldots,i_{\abs{\alpha}}$ are defined as follows: If
$\{l_1,\ldots,l_{\abs{\alpha}} \}$ is the elementary decomposition of
$\alpha$, we set $i_j = k$ if $l_j=e_k$,
$j=1,\ldots,\abs{\alpha}$. To illustrate this formula, we
have for example
\begin{equation}
  \label{eq:59}
  \kappa_{(2,1)}(F) = 8 \left\langle f_1 \widetilde{\otimes}_1 f_1,f_2
    \right\rangle_{\mathfrak{H}^{\otimes 2}},
\end{equation}
or, with a different labelling of the elementary decomposition,
\begin{equation}
  \label{eq:63}
  \kappa_{(2,1)}(F) = 8 \left(
    \left\langle f_1 \widetilde{\otimes}_1 f_2,f_1
    \right\rangle_{\mathfrak{H}^{\otimes 2}}
    +
    \left\langle f_2 \widetilde{\otimes}_1 f_1,f_1
    \right\rangle_{\mathfrak{H}^{\otimes 2}}
  \right).
\end{equation}
One can verify by direct computations that the right hand sides
of~\eqref{eq:59} and~\eqref{eq:63} are indeed equal.

\subsection{Limit theorems for vectors of multiple integrals}
\label{s-21}

In this section, we gather two results from the literature which
we will use extensively in the sequel. The first is a version of the so 
called \textit{Fourth Moment Theorem} (see~\cite[Theorem
3]{MR2350573}) for fluctuating covariances, that is based on the
findings in~\cite{MR2118863}, \cite{MR2394845} and~\cite{MR2126978}
for the converging covariance case.

\begin{theorem}[Fourth Moment Theorem,~\cite{MR2350573}]
  \label{thm:7} \hfill
  \begin{enumerate}[(A)]
  \item 
  Let $q \geq 1$ and $(F_n)_{n \geq 1} = (I_q(f_n))_{n \geq 1}$ be a
  sequence of multiple 
  integrals and assume that there exists a constant $M$ such that $\E
  \left[ F_n^2 \right] \leq M$ for $n \geq 1$. Then the following
  conditions are equivalent. 
  \begin{enumerate}[(i)]
  \item \label{item:1} $(F_n)_{n \geq 1}$ is ACN
  \item $\E \left[ F_n^4 \right] - 3 \E \left[ F_n^2 \right]^2 \to 0$
  \item For $1 \leq r \leq q-1$ it holds that \, 
      $\norm{ f_{n} \otimes_r
        f_{n}}_{\mathfrak{H}^{\otimes 2(q-r)}} \to 0$
  \item $\on{Var} \big(
    \Gamma_{11}(F_n) \big) \to 0 $
  \end{enumerate}
  If the variance of $F_n$ converges to some limit $c$, conditions
  (i)-(iv) are equivalent to
  \begin{enumerate}
  \item[(i')] $F_n \xrightarrow{d} Z$, where $Z$ is a centered normal
    with variance $c$.
  \end{enumerate}
\item
    Let $(F_n)_{n \geq 1} = (F_{1,n},\ldots,F_{d,n})_{n \geq 1}$ be a
  random sequence such that $F_{i,n}=I_{q_i}(f_{i,n})$, $q_i \geq 1$, for $1 \leq i
  \leq d$ and the covariances of $(F_n)$ are uniformly bounded. Then
  $(F_n)$ is ACN if and only if $(F_{i,n})$ is ACN for $1 \leq i \leq d$.
    \end{enumerate}
\end{theorem}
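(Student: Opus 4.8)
The plan is to treat the two implications separately. The ``only if'' direction requires no work: as already observed in Section~\ref{s-13}, every component of an ACN sequence is again ACN, which is exactly one of the two implications. For the ``if'' direction I would argue by a subsequence reduction to the converging-covariance case. Recall that $(F_n)$ being ACN means precisely that $\beta(P_{F_n},P_{Z_n}) \to 0$, and that a nonnegative sequence tends to $0$ exactly when every subsequence admits a further subsequence tending to $0$. It therefore suffices to fix an arbitrary subsequence and extract from it a sub-subsequence along which $\beta(P_{F_n},P_{Z_n}) \to 0$.

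First I would exploit the uniform boundedness of the covariances $\E[F_{i,n}F_{j,n}]$ to pass, by compactness, to a further subsequence (which I relabel as $(F_n)$) along which every covariance converges to a limit $C_{ij}$; the matrix $C=(C_{ij})$ is then symmetric and positive semidefinite. Along this subsequence the centered Gaussians $Z_n$ have covariances converging to $C$, so $Z_n \xrightarrow{\mathcal{L}} Z$ with $Z \sim \mathcal{N}(0,C)$.

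The heart of the matter is then to show that $F_n \xrightarrow{\mathcal{L}} Z$ along the same subsequence. Each marginal $(F_{i,n})$ is ACN by hypothesis and, after the relabeling, has converging second moments $\E[F_{i,n}^2] \to C_{ii}$ (the first moments being zero); by the remark at the end of Section~\ref{s-13}, ACN together with converging moments is equivalent to convergence in law, so $F_{i,n} \xrightarrow{\mathcal{L}} \mathcal{N}(0,C_{ii})$. Invoking part~(A), specifically the equivalence of~(i') and~(iii), this is the same as $\norm{f_{i,n} \otimes_r f_{i,n}}_{\mathfrak{H}^{\otimes 2(q_i-r)}} \to 0$ for all $1 \leq r \leq q_i-1$. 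These diagonal contraction conditions, combined with the convergence of the covariance matrix to $C$, are exactly the hypotheses of the converging-covariance multidimensional Fourth Moment (Peccati--Tudor) theorem of~\cite{MR2118863}, which then delivers $F_n \xrightarrow{\mathcal{L}} Z$.

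Finally, since both $F_n$ and $Z_n$ converge in law to $Z$ along the sub-subsequence and $\beta$ metrizes weak convergence, the triangle inequality $\beta(P_{F_n},P_{Z_n}) \leq \beta(P_{F_n},P_Z) + \beta(P_Z,P_{Z_n})$ forces $\beta(P_{F_n},P_{Z_n}) \to 0$, which closes the subsequence argument and shows that $(F_n)$ is ACN. The step I expect to be the main obstacle is the passage from marginal to joint Gaussianity, i.e.\ the Peccati--Tudor input: if one wished to prove it from scratch rather than quote it, the key point would be that the vanishing of the diagonal contractions $f_{i,n}\otimes_r f_{i,n}$ forces the mixed contractions $f_{i,n}\otimes_r f_{j,n}$ to vanish as well (via a Cauchy--Schwarz estimate), so that Theorem~\ref{thm:4} shows all joint cumulants of order at least three tend to zero, leaving a Gaussian as the only possible limit.
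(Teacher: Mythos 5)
There is nothing to compare against inside the paper: Theorem~\ref{thm:7} is quoted verbatim from the literature (Theorem~3 of \cite{MR2350573}, itself built on \cite{MR2118863}, \cite{MR2394845} and \cite{MR2126978}) and the paper supplies no proof of either part. Judged on its own merits, your derivation of part (B) is correct and is essentially the standard reduction to the converging-covariance case. The ``only if'' direction is indeed the coordinate-projection observation from Section~\ref{s-13} (projections are $1$-Lipschitz, so the Fortet--Mourier distance can only decrease). For the ``if'' direction, the subsequence principle combined with Bolzano--Weierstrass on the uniformly bounded covariances is a clean way to reduce to a sub-subsequence with $\on{Cov}(F_n) \to C$; there, the paper's remark that ACN plus converging first and second moments is equivalent to convergence in law gives $F_{i,n} \xrightarrow{\mathcal{L}} \mathcal{N}(0,C_{ii})$, part (A)'s equivalence of (i') and (iii) converts this into $\norm{f_{i,n} \otimes_r f_{i,n}}_{\mathfrak{H}^{\otimes 2(q_i-r)}} \to 0$, the Peccati--Tudor theorem of \cite{MR2118863} upgrades componentwise to joint convergence, and the triangle inequality for $\beta$ closes the loop. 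Each step is sound, including the degenerate case $C_{ii}=0$, which is covered by the paper's convention that constants count as Gaussians.

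Two caveats are worth recording. First, your proposal proves only part (B): part (A), the one-dimensional fourth-moment equivalence for fluctuating variances, is invoked but never proven, so what you have is a proof of (B) conditional on (A) and on Peccati--Tudor. Relative to the paper this is not a defect -- the paper treats both parts as an external citation -- but a self-contained proof of the statement as written would still owe (A). Second, your closing sketch of Peccati--Tudor (diagonal contractions dominate mixed contractions via Cauchy--Schwarz, hence all joint cumulants of order at least three vanish by Theorem~\ref{thm:4}) is the right idea, but to convert vanishing cumulants into convergence in law one also needs the moment-determinacy of the Gaussian limit together with uniform integrability of all powers, which on a fixed finite sum of chaoses comes from hypercontractivity; that input should be stated explicitly if you ever flesh the sketch out.
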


Secondly, we will make use of the following central limit theorem for
the case  where one component of the random vectors $F_n$ has a
finite chaos expansion. As this result is an immediate consequence of the
findings in~\cite{MR2350573}, we omit the proof.

\begin{lemma}
  \label{lem:1}
  Let $(F_n)_{n \geq 1} = (F_{1,n},\ldots,F_{d,n})_{n \geq 1}$ be a
  sequence of random vectors such that $F_{i,n}=I_{q_i}(F_{i,n})$ for
  $n \geq 1$ and 
  $1 \leq i \leq d$. Furthermore, let $G_n =
  \sum_{k=1}^M I_k(g_{k,n})$ for $n \geq 1$. If
  \begin{equation}
    \label{eq:25}
    \sum_{i=1}^d
    \sum_{r=1}^{q_i-1}
    \,
    \norm{f_{i,n} \otimes_r f_{i,n}}_{\mathfrak{H}^{\otimes 2(q_i-r)}}
    \to 0
  \end{equation}
  and
  \begin{equation}
    \label{eq:12}
    \sum_{k=2}^M
    \sum_{s=1}^{k-1}
    \norm{g_{k,n} \otimes_s
      g_{k,n}}_{\mathfrak{H}^{\otimes 2(q_k-s)}} \to 0,
  \end{equation}
  then $(F_n,G_n)_{n \geq 1}$ is ACN. 
\end{lemma}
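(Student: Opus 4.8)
The plan is to reduce the statement to the vector-valued Fourth Moment Theorem (part~(B) of Theorem~\ref{thm:7}) by enlarging the vector so that every component is a single multiple integral. Since $G_n = \sum_{k=1}^M I_k(g_{k,n})$ is only a finite sum of multiple integrals, I would first split it into its chaotic pieces and consider the enlarged sequence
\[
  \widetilde{F}_n = \big( F_{1,n},\ldots,F_{d,n},I_1(g_{1,n}),I_2(g_{2,n}),\ldots,I_M(g_{M,n}) \big),
\]
which is a vector of $d+M$ multiple integrals. The original vector $(F_n,G_n)$ is then the image of $\widetilde{F}_n$ under the fixed linear map $T \colon \R^{d+M} \to \R^{d+1}$ that leaves the first $d$ coordinates unchanged and sends the last $M$ coordinates to their sum.

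Next I would argue that the ACN property is preserved under $T$. By Theorem~\ref{thm:10}, if $\widetilde{F}_n$ is ACN there exist representations $\widetilde{X}_n$ and $\widetilde{Z}_n$ on a common probability space, with $\widetilde{Z}_n$ Gaussian carrying the first and second moments of $\widetilde{F}_n$, such that $\widetilde{X}_n - \widetilde{Z}_n \to 0$ almost surely. Applying the linear---hence Lipschitz---map $T$ yields $T\widetilde{X}_n - T\widetilde{Z}_n \to 0$ almost surely, and a second appeal to Theorem~\ref{thm:10} gives $\beta(P_{T\widetilde{X}_n}, P_{T\widetilde{Z}_n}) \to 0$. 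Since $\widetilde{X}_n$ has the law of $\widetilde{F}_n$, the vector $T\widetilde{X}_n$ has the law of $(F_n,G_n)$, while linearity of $T$ forces $T\widetilde{Z}_n$ to be Gaussian with the same first and second moments as $(F_n,G_n)$. Hence $(F_n,G_n)$ is ACN, and it suffices to prove that $\widetilde{F}_n$ is ACN.

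By part~(B) of Theorem~\ref{thm:7}, the vector $\widetilde{F}_n$ (whose covariances are assumed uniformly bounded) is ACN if and only if each of its components is ACN. For the components $F_{i,n} = I_{q_i}(f_{i,n})$, the equivalence (i)~$\Leftrightarrow$~(iii) in part~(A) of Theorem~\ref{thm:7} shows that $F_{i,n}$ is ACN precisely when $\norm{f_{i,n} \otimes_r f_{i,n}}_{\mathfrak{H}^{\otimes 2(q_i-r)}} \to 0$ for every $1 \le r \le q_i-1$; this is guaranteed by hypothesis~\eqref{eq:25}, since a finite sum of nonnegative terms tends to zero only if each summand does. For the components $I_k(g_{k,n})$ with $k \ge 2$, the same equivalence yields ACN as soon as $\norm{g_{k,n} \otimes_s g_{k,n}} \to 0$ for $1 \le s \le k-1$, which is exactly hypothesis~\eqref{eq:12}. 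Finally, $I_1(g_{1,n})$ lies in the first Wiener chaos and is therefore already Gaussian, so it is trivially ACN. Thus every component of $\widetilde{F}_n$ is ACN, completing the argument.

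The one point genuinely requiring care is the standing assumption that the covariances of the enlarged vector are uniformly bounded, which is what allows the invocation of Theorem~\ref{thm:7}; this is the real hypothesis concealed behind the phrase \emph{immediate consequence}, and I would record it explicitly. Everything else is bookkeeping: the chaos expansion of $G_n$, the Lipschitz-invariance of $\beta$ obtained through the coupling of Theorem~\ref{thm:10}, and the term-by-term reading of the two contraction conditions.
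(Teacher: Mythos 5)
Your proof is correct and is essentially the route the paper itself intends: the paper omits the proof of Lemma~\ref{lem:1}, declaring it an immediate consequence of the findings of~\cite{MR2350573} (encapsulated in Theorem~\ref{thm:7}), and your argument supplies precisely the bookkeeping that makes this reduction rigorous --- splitting $G_n$ into its chaotic components, transporting the ACN property through a fixed linear map via the coupling characterization of Theorem~\ref{thm:10}, and then invoking parts (A) and (B) of Theorem~\ref{thm:7} componentwise. Your closing caveat is also well taken: the uniform boundedness of the covariances, which Theorem~\ref{thm:7} requires but the lemma's hypotheses do not state, is tacitly assumed both by the paper and by any proof along these lines, so recording it explicitly is the more careful course.
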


\section{Main results}
\label{s-3}

In this section, for some fixed positive integer $d$, we denote by $(F_n)_{n \geq
  1} = (F_{1,n}, F_{2,n},\ldots,F_{d,n})_{n \geq 1}$  a sequence of centered,
$\R^d$-valued random vectors such that $F_{i,n} \in
\mathbb{D}^{1,4}$ for $1 \leq i \leq d$. We also introduce a normalized
sequence $(\widetilde{\Gamma}_{ij}(F_n))_{n \geq  1}$ for $1 \leq i,j
\leq d$, which is defined by
\begin{equation*}
  \widetilde{\Gamma}_{ij}(F_n) =
  \frac{
    \Gamma_{ij}(F_n) - \E \left[ \Gamma_{ij}(F_n) \right]
  }{
    \sqrt{\on{Var} \Gamma_{ij}(F_n)}
  }
  =
  \frac{
    \Gamma_{ij}(F_n) - \E \left[ F_{i,n} F_{j,n} \right]
  }{
    \sqrt{\on{Var} \Gamma_{ij}(F_n)}
  },
\end{equation*}
where $\Gamma_{ij}(F_n)$ is given by~\eqref{eq:55}. Furthermore, for $1 \leq
i,j \leq d$, let $(Z_n)_{n \geq 1} =
(Z_{1,n},\ldots,Z_{d,n})_{n \geq 1}$ be a centered
sequence of Gaussian random variables such that $Z_n$ has the
same covariance as $F_n$ for $n \geq 1$.
The following crucial identity is the starting point
of our investigations.

\begin{theorem}[\cite{pre05793427}]
  \label{thm:3}
  Let $g \in \mathcal{C}^2(\R^d)$ and $Z$ be a $d$-dimensional normal
  vector with
  covariance matrix $C$. Then, for every $n \geq 1$, it holds that
    \begin{equation}
      \label{eq:62}
      \E \left[ g(F_n) \right] - \E \left[ g(Z) \right]
      =
      \sum_{i,j=1}^d
      \E \left[
        \partial_{ij} U_{g,C}(F_n)
        \,
        \left(
          \Gamma_{ij}(F_n) - C_{ij} 
        \right)
      \right],
    \end{equation}
where $U_{g,C}$ is defined by~\eqref{eq:9}.
\end{theorem}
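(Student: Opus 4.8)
The plan is to start from the multidimensional Stein equation established in part a) of Lemma~\ref{lem:10}. Applied to the function $U_{g,C}$, this equation gives
\begin{equation*}
  \left\langle C, \on{Hess} U_{g,C}(x) \right\rangle_{\text{H.S.}}
  -
  \left\langle x, \nabla U_{g,C}(x) \right\rangle_{\R^d}
  =
  g(x) - \E \left[ g(Z) \right].
\end{equation*}
Substituting $x = F_n$ and taking expectations, the right-hand side becomes exactly $\E \left[ g(F_n) \right] - \E \left[ g(Z) \right]$, the quantity we want to compute. Writing out the Hilbert-Schmidt and Euclidean inner products in coordinates, the left-hand side reads
\begin{equation*}
  \sum_{i,j=1}^d C_{ij} \, \E \left[ \partial_{ij} U_{g,C}(F_n) \right]
  -
  \sum_{i=1}^d \E \left[ F_{i,n} \, \partial_i U_{g,C}(F_n) \right].
\end{equation*}
So the theorem will follow once I show that this difference equals the claimed sum $\sum_{i,j} \E \left[ \partial_{ij} U_{g,C}(F_n) \left( \Gamma_{ij}(F_n) - C_{ij} \right) \right]$.

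The key step is to rewrite the second sum. Here I would use the Malliavin integration-by-parts machinery: for $F_{i,n} \in \mathbb{D}^{1,4}$ and a sufficiently smooth $h$, the identity $\E \left[ F_{i,n} \, h(F_n) \right] = \E \left[ \left\langle Dh(F_n), -DL^{-1}F_{i,n} \right\rangle_{\mathfrak{H}} \right]$ holds (this is the relation $LL^{-1}F_{i,n} = F_{i,n}$ combined with the duality between $D$ and the divergence operator). Applying this with $h = \partial_i U_{g,C}$ and then expanding $D\bigl(\partial_i U_{g,C}(F_n)\bigr) = \sum_{j=1}^d \partial_{ij}U_{g,C}(F_n)\, DF_{j,n}$ via the chain rule, I obtain
\begin{equation*}
  \E \left[ F_{i,n} \, \partial_i U_{g,C}(F_n) \right]
  =
  \sum_{j=1}^d \E \left[ \partial_{ij} U_{g,C}(F_n) \left\langle DF_{j,n}, -DL^{-1}F_{i,n} \right\rangle_{\mathfrak{H}} \right]
  =
  \sum_{j=1}^d \E \left[ \partial_{ij} U_{g,C}(F_n) \, \Gamma_{ij}(F_n) \right],
\end{equation*}
using the definition~\eqref{eq:55} of $\Gamma_{ij}(F_n)$. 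Summing over $i$ and subtracting from the $C_{ij}$ term produces precisely the right-hand side of~\eqref{eq:62}, since $\sum_{i,j} C_{ij}\,\E[\partial_{ij}U_{g,C}(F_n)]$ supplies the $-C_{ij}$ inside the bracket (after noting the symmetry of $C$ allows matching the index pair).

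The main obstacle is \emph{justifying the integration by parts}, since $g$ is only assumed $\mathcal{C}^2$ without bounded derivatives, so $U_{g,C}$ and its derivatives need not be globally controlled, and the chain rule together with the duality formula require the relevant random variables to lie in the appropriate Malliavin-Sobolev and $L^p$ spaces. The integrability assumption $F_{i,n} \in \mathbb{D}^{1,4}$ is exactly what is needed to make $\Gamma_{ij}(F_n) \in L^2$ and to control the products $\partial_{ij}U_{g,C}(F_n)\,\Gamma_{ij}(F_n)$; one typically handles the smoothness gap by first proving the identity for $g$ with bounded derivatives (where Lemma~\ref{lem:10} b) applies directly) and then extending by a density or truncation argument. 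Since the statement is attributed to~\cite{pre05793427}, I would expect the careful version of this approximation to be carried out there, and the essential algebraic content is the Stein-equation-plus-integration-by-parts computation sketched above.
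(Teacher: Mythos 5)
The paper does not actually prove Theorem~\ref{thm:3}: identity \eqref{eq:62} is quoted from \cite{pre05793427}, where it is derived by the ``smart path'' interpolation method (differentiating $t \mapsto \E\left[ g(\sqrt{t}\,F_n + \sqrt{1-t}\,Z) \right]$ and integrating in $t$) together with Malliavin calculus, and the paper explicitly remarks that a Stein's-method derivation of the kind you propose is available only ``if the covariance matrix $C$ is positive definite'' (citing the proof of Theorem~3.5 in \cite{1196.60035}). Your sketch is therefore a reconstruction of that alternative route, not of the cited proof, and the price is scope: Theorem~\ref{thm:3} is stated for an arbitrary covariance matrix and is later invoked, in the proof of Theorem~\ref{thm:2}, with $C = \on{Cov}(F_n)$, which need not be invertible; the smart-path argument covers this singular case, while your argument rests on Lemma~\ref{lem:10}a), whose proof the paper in turn takes from \cite{1196.60035} under positive definiteness, so it would need an additional approximation step to reach the general statement. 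A smaller point: your integration by parts $\E\left[ F_{i,n} h(F_n) \right] = \E\left[ \left\langle Dh(F_n), -DL^{-1}F_{i,n} \right\rangle_{\mathfrak{H}} \right]$ uses $LL^{-1}F_{i,n} = F_{i,n}$, which requires the $F_{i,n}$ to be centered ($LL^{-1}F = F - \E\left[ F \right]$); this is a standing assumption of Section~3, but it should be invoked.

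More importantly, one step fails as written: the signs do not close. Starting from Lemma~\ref{lem:10}a) verbatim, taking expectations at $x = F_n$ and substituting your identity $\sum_{i} \E\left[ F_{i,n}\, \partial_i U_{g,C}(F_n) \right] = \sum_{i,j} \E\left[ \partial_{ij} U_{g,C}(F_n)\, \Gamma_{ij}(F_n) \right]$ gives
\begin{equation*}
  \E\left[ g(F_n) \right] - \E\left[ g(Z) \right]
  =
  \sum_{i,j=1}^d
  \E\left[ \partial_{ij} U_{g,C}(F_n) \left( C_{ij} - \Gamma_{ij}(F_n) \right) \right],
\end{equation*}
which is the \emph{negative} of \eqref{eq:62}, so the final cancellation you assert does not occur; indeed, if Lemma~\ref{lem:10}a) and \eqref{eq:62} both held for the same $U_{g,C}$, your computation would force $\E\left[ g(F_n) \right] = \E\left[ g(Z) \right]$ for every admissible $g$. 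The clash stems from a sign inconsistency inside the paper itself: the operator $\left\langle C, \on{Hess}\, \cdot \right\rangle_{\text{H.S.}} - \left\langle x, \nabla \cdot \right\rangle_{\R^d}$ is the generator of the semigroup $(P_t)$, so the function $U_{g,C} = \int_0^{\infty} \left( P_t g - P_{\infty} g \right) \diff{t}$ (the paper's own identification below \eqref{eq:9}, the one for which \eqref{eq:54} holds and which the proofs of Theorems~\ref{thm:2} and~\ref{thm:5} actually use) satisfies $\left\langle C, \on{Hess} U_{g,C} \right\rangle_{\text{H.S.}} - \left\langle x, \nabla U_{g,C} \right\rangle_{\R^d} = \int_0^{\infty} \tfrac{d}{dt} P_t g \diff{t} = \E\left[ g(Z) \right] - g$, i.e.\ the Stein equation with the sign opposite to Lemma~\ref{lem:10}a). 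Your argument is correct precisely when started from that corrected equation, $\left\langle x, \nabla U_{g,C} \right\rangle_{\R^d} - \left\langle C, \on{Hess} U_{g,C} \right\rangle_{\text{H.S.}} = g - \E\left[ g(Z) \right]$, which is the form used in \cite{1196.60035}; started from Lemma~\ref{lem:10}a) as printed, it proves the wrong identity.
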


Identity~\eqref{eq:62} has been derived in~\cite{pre05793427} by the
so called ``smart path method'' and Malliavin calculus. If the covariance
matrix 
$C$ is positive definite, one can give an alternative proof by using
Stein's method (see~\cite[proof of Theorem 3.5]{1196.60035}).

A straightforward application of the Cauchy-Schwarz inequality to
identity~\eqref{eq:62} yields the bound
\begin{equation}
  \label{eq:75}
  \abs{
    \E \left[ g(F_n) \right] - \E \left[ g(Z) \right]
  }
  \leq
  \frac{\sqrt{d}}{2}
  \left(
    \sup_{\abs{\alpha}=2}
    \norm{\partial_{\alpha} g}_{\infty}
  \right)
  \,
  \varphi_C(F_n),
\end{equation}
where $\varphi_C(F_n) = \Delta_{\Gamma}(F_n) + \Delta_C(F_n)$ and the
quantities $\Delta_{\Gamma}(F_n)$ and $\Delta_C(F_n)$, that already
appeared in the Introduction, are defined by
\begin{equation}
  \label{eq:71}
  \Delta_{\Gamma}(F_n)
  =
  \norm{\Gamma(F_n) - \on{Cov}(F_n)}_{H.S.}
  =
    \sqrt{
    \sum_{i,j=1}^d
    \on{Var} \,
      \Gamma_{ij}(F_n)
    }
\end{equation}
and
\begin{equation}
  \label{eq:7}
  \Delta_C(F_n) =
  \norm{\on{Cov}(F_n)-\on{Cov}(Z)}_{H.S.}
  =
    \sqrt{
      \sum_{i,j=1}^d
      \left(\E \left[ F_iF_j \right] - C_{ij} \right)^2
    }.
\end{equation}
Here, $\norm{\cdot}_{H.S.}$ denotes the Hilbert-Schmidt matrix
norm. Note that $\Delta_C(F_n)$ is equal to zero if
and only if $F_n$ has covariance matrix $C$ and that $\Delta_{\Gamma}(F_n)$ is
equal to zero if $F_n$ is Gaussian. The latter follows from the fact that 
$\abs{\Gamma_{ij}(F_n)}$ is constant if $F_{i,n}$ and $F_{j,n}$
are Gaussian, which can be seen by applying the bound~\eqref{eq:75} to the
vector $(F_{i,n},F_{j,n})$ and a centered Gaussian vector $(Z_1,Z_2)$ with the
same covariance.  

Assume now that $\varphi_C(F_n)$ converges to zero. For the
one-dimensional case $d=1$, an adaptation of the
arguments in~\cite{1196.60034} provides conditions under which the ratio
\begin{equation*}
  \frac{
    \E \left[ g(F_n) \right] - \E \left[ g(Z) \right]
  }{
    \varphi_C(F_n)
  }
\end{equation*}
converges to some real number. If this number is non-zero, this
implies in particular that the rate $\varphi_C(F_n)$ is optimal, in
the sense that there exist positive constants $c_1$, $c_2$ and $n_0$
such that 
\begin{equation*}
  c_1 \leq
  \frac{
    \abs{
      \E \left[ g(F_n) \right] - \E \left[ g(Z) \right]
    }
  }{
    \varphi_C(F_n)
  }
  \leq c_2
  \end{equation*}
for $n \geq n_0$. As already mentioned in the introduction, by
approximating a Lipschitz function by functions with bounded
derivatives, this implies that 
$\varphi_C(F_n)$ is optimal for the one-dimensional Wasserstein
distance (see~\cite{1196.60034} for details). By considering
coordinate projections, optimality in multiple dimensions case can
immediately be reduced to the one-dimensional case.
However, obtaining exact asymptotics is a much more
involved task, as the next two theorems show.

\begin{theorem}[Exact asymptotics for the fluctuating variance case]
  \label{thm:2}
  Assume that $\Delta_{\Gamma}(F_n) \to 0$ and let $g \colon \R^d \to
  \R$ be three times differentiable with
  bounded derivatives up to order three.
  If, for $1 \leq i,j \leq d$, the random sequences $\big( F_n,
  \widetilde{\Gamma}_{ij}(F_n) \big)_{n \geq 1}$  are ACN whenever
  $\sqrt{\on{Var} \Gamma_{ij}(F_n)} \asymp \Delta_{\Gamma}(F_n)$
  it holds that
  \begin{multline}
    \label{eq:38}
    \frac{1}{\Delta_{\Gamma}(F_n)} 
    \Big(
      \E \left[ g(F_n) \right]
      -
      \E \left[ g(Z_n) \right]
     \\    
      -
      \frac{1}{3}
      \sum_{i,j,k=1}^d
      \sqrt{\on{Var}\Gamma_{ij}(F_n)}
      \rho_{ijk,n}
      \E \left[ \partial_{ijk} g(Z_n)\right]
    \Big)
    \to 0.
  \end{multline}
  Here, the constants $\rho_{ijk,n}$ are defined by
  \begin{equation*}
    \rho_{ijk,n} = \E \left[
    \widetilde{Z}_{ij,n} Z_{k,n} \right]
  \end{equation*}
 whenever~ $\sqrt{\on{Var} 
      \Gamma_{ij}(F_n)} \asymp \Delta_{\Gamma}(F_n)$ holds and  $(F_N,
    \widetilde{\Gamma}_{ij,n})_{n \geq 1}$ is ACN 
    with corresponding Gaussian sequence
    $(Z_n,\widetilde{Z}_{ij,n})$, and $\rho_{ijk,n}=0$ otherwise. 
\end{theorem}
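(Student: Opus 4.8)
The plan is to start from the exact identity~\eqref{eq:62} of Theorem~\ref{thm:3}, applied with the matrix $C$ taken to be the covariance matrix of $F_n$ (equivalently, of $Z_n$) for each fixed $n$. With this choice $\Delta_C(F_n)=0$, so $C_{ij}=\E[F_{i,n}F_{j,n}]=\E[\Gamma_{ij}(F_n)]$, and the identity reads
\begin{equation*}
  \E[g(F_n)]-\E[g(Z_n)]
  =
  \sum_{i,j=1}^d
  \E\left[\partial_{ij}U_{g,C_n}(F_n)\,\big(\Gamma_{ij}(F_n)-\E[\Gamma_{ij}(F_n)]\big)\right].
\end{equation*}
Writing $\Gamma_{ij}(F_n)-\E[\Gamma_{ij}(F_n)]=\sqrt{\on{Var}\Gamma_{ij}(F_n)}\,\widetilde{\Gamma}_{ij}(F_n)$ and dividing by $\Delta_{\Gamma}(F_n)$, the quantity to analyse becomes
\begin{equation*}
  \sum_{i,j=1}^d
  \frac{\sqrt{\on{Var}\Gamma_{ij}(F_n)}}{\Delta_{\Gamma}(F_n)}\,
  \E\left[\partial_{ij}U_{g,C_n}(F_n)\,\widetilde{\Gamma}_{ij}(F_n)\right].
\end{equation*}
The coefficients $\sqrt{\on{Var}\Gamma_{ij}(F_n)}/\Delta_{\Gamma}(F_n)$ are bounded by $1$, so I only need to control each summand, and moreover only those indices $(i,j)$ for which $\sqrt{\on{Var}\Gamma_{ij}(F_n)}\asymp\Delta_{\Gamma}(F_n)$ can contribute in the limit; for all other pairs the prefactor tends to $0$ and, since $\partial_{ij}U_{g,C_n}$ is bounded via~\eqref{eq:76} and $\widetilde{\Gamma}_{ij}(F_n)$ has unit variance, that summand vanishes. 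This matches exactly the convention $\rho_{ijk,n}=0$ off the dominant index set.

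The heart of the argument is to show, for each dominant pair $(i,j)$, that
\begin{equation*}
  \E\left[\partial_{ij}U_{g,C_n}(F_n)\,\widetilde{\Gamma}_{ij}(F_n)\right]
  -
  \E\left[\partial_{ij}U_{g,C_n}(Z_n)\,\widetilde{Z}_{ij,n}\right]
  \to 0.
\end{equation*}
This is where the ACN hypothesis on $\big(F_n,\widetilde{\Gamma}_{ij}(F_n)\big)$ enters. The idea is to view the map $(x,y)\mapsto\partial_{ij}U_{g,C_n}(x)\,y$ as a test function on $\R^{d+1}$, and to exploit that the $(d{+}1)$-dimensional sequence $(F_n,\widetilde{\Gamma}_{ij}(F_n))$ is asymptotically close to the Gaussian sequence $(Z_n,\widetilde{Z}_{ij,n})$ with matching first and second moments. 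Because $g$ has bounded derivatives up to order three, the functions $\partial_{ij}U_{g,C_n}$ are Lipschitz with a bound on $\norm{\partial_{ijk}U_{g,C_n}}_\infty$ uniform in $n$ by~\eqref{eq:76}, so the product is of linear growth in $y$ with Lipschitz-continuous $x$-dependence; a Skorokhod-type coupling from Theorem~\ref{thm:10} (choosing representatives with $\,(X_n,Y_n)-(\widetilde X_n,\widetilde Y_n)\to0$ almost surely) together with uniform integrability then passes the difference to zero.

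Finally I would evaluate the Gaussian term exactly. Once the expectation is computed against the true Gaussian vector $(Z_n,\widetilde{Z}_{ij,n})$, I would use the integration-by-parts formula~\eqref{eq:64} (via~\eqref{eq:54}) to turn $\E[\partial_{ij}U_{g,C_n}(Z_n)\,\widetilde{Z}_{ij,n}]$ into third-order derivatives of $g$: the variable $\widetilde{Z}_{ij,n}$ is a centered Gaussian jointly normal with $Z_n$, so a single Gaussian integration by parts against $\widetilde{Z}_{ij,n}$ produces a factor $\sum_k\E[\widetilde{Z}_{ij,n}Z_{k,n}]\,\partial_k=\sum_k\rho_{ijk,n}\,\partial_k$ acting on $\partial_{ij}U_{g,C_n}$, and then~\eqref{eq:54} converts $\E[\partial_{ijk}U_{g,C_n}(Z_n)]$ into $\tfrac13\E[\partial_{ijk}g(Z_n)]$. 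Summing over $i,j,k$ reproduces precisely the subtracted Edgeworth term in~\eqref{eq:38}. The main obstacle I anticipate is the uniform-integrability/coupling step: ACN gives only almost-sure closeness of representatives, whereas the test function $\partial_{ij}U_{g,C_n}(x)\,y$ grows linearly in $y$ and the covariance matrices $C_n$ vary with $n$, so I must secure uniform $L^p$ control of $\widetilde{\Gamma}_{ij}(F_n)$ (its variance is normalized to $1$, but I need slightly more for uniform integrability of the product) and uniform-in-$n$ bounds on the derivatives of $U_{g,C_n}$ as $C_n$ fluctuates.
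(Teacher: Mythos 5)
Your proposal follows essentially the same route as the paper's proof: the identity~\eqref{eq:62} with $C=C_n$, reduction to the dominant pairs via the uniform bound~\eqref{eq:76} and unit variance, the coupling from Theorem~\ref{thm:10}, and then Gaussian integration by parts~\eqref{eq:10} together with~\eqref{eq:54} to produce the term $\tfrac13\sum_k\rho_{ijk,n}\E[\partial_{ijk}g(Z_n)]$. The obstacle you anticipate is not actually one: since $\norm{\partial_{\alpha}U_{g,C}}_{\infty}\leq\norm{\partial_{\alpha}g}_{\infty}/\abs{\alpha}$ is independent of $C$, and unit variance gives $L^2$-boundedness (hence uniform integrability) of $\widetilde{\Gamma}_{ij}(F_n)^{\ast}$ and of $\widetilde{\Gamma}_{ij}(F_n)^{\ast}-\widetilde{Z}_{ij,n}^{\ast}$, the product of a bounded almost-surely vanishing factor with a uniformly integrable one converges to zero in expectation, which is exactly how the paper closes the argument.
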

 
\begin{remark}
 Clearly, the condition~ $\sqrt{\on{Var} \Gamma_{ij}(F_n)} \asymp
 \Delta_{\Gamma}(F_n)$ in the above Theorem expresses the fact that we can
 neglect those    summands of $\Delta_{\Gamma}(F_n)$ that vanish ``too
 fast'' and  therefore do not contribute to the overall speed of
 convergence of $\Delta_{\Gamma}(F_n)$. 
\end{remark}

\begin{proof}[Proof of Theorem~\ref{thm:2}]
  By applying Theorem~\ref{thm:3}, we get
  \begin{multline}
    \label{eq:3}
    \frac{
      \E \left[ g(F_n) \right] - \E \left[ g(Z_n) \right]
      }{\Delta_{\Gamma}(F_n)}
    \\ =
    \sum_{i,j=1}^d
    \frac{\sqrt{\on{Var} \, \Gamma_{ij}(F_n)}}{\Delta_{\Gamma}(F_n)}
    \E  \left[
      \partial_{ij} U_{g,C_n}(F_n)
      \,
      \widetilde{\Gamma}_{ij}(F_n)
    \right].
  \end{multline}
  The bound~\eqref{eq:76} for the derivatives of $U_{g,C_n}$ and the
  fact that   $\widetilde{\Gamma}_{ij}(F_n)$ has unit variance
  immediately implies that the expectations occuring in the sum on the
  right hand side of~\eqref{eq:3} are bounded. Therefore, we only have
  to examine those summands in the same sum, for which~\eqref{eq:28}
  is true (as   all  others   vanish in the limit).
  Now choose $1 \leq i,j \leq d$ such that~\eqref{eq:28} holds. Due to
  our assumption,
  Theorem~\ref{thm:10} implies the existence of random vectors 
  $(F^{\ast}_n,\widetilde{\Gamma}_{ij}(F_n)^{\ast})$  and Gaussian
  random variables
  $(Z^{\ast}_n,\widetilde{Z}^{\ast}_{ij,n})$, defined on some common
  probability space, such that
  $(F^{\ast}_n,\widetilde{\Gamma}_{ij}(F_n)^{\ast})$ has the same law
  as $(F_n,\widetilde{\Gamma}_{ij}(F_n))$,
  $(Z^{\ast},\widetilde{Z}_{ij,n}^{\ast})$ has the same law as
  $(Z,\widetilde{Z}_{ij,n})$ and
  $(F_n^{\ast}-Z_n^{\ast},\widetilde{\Gamma}_{ij}(F_n)^{\ast} -
  \widetilde{Z}_{ij,n}^{\ast}) \to 0$ almost surely. 
  Thus we can write
  \begin{equation}
    \label{eq:5}
    \E \left[ \partial_{ij} U_{g,C_n}(F_n)
    \widetilde{\Gamma}_{ij}(F_n) \right] = \eta^1_{ij,n} +
  \eta^2_{ij,n} + \eta^3_{ij,n},
  \end{equation}
  where
  \begin{align*}
    \eta_{ij,n}^1
    &=
    \E  \left[
      \left(
        \partial_{ij} U_{g,C_n}(F_n^{\ast})
        -
        \partial_{ij} U_{g,C_n}(Z_n^{\ast})
      \right)
      \, 
      \widetilde{\Gamma}_{ij}(F_n)^{\ast}
    \right],
    \\
    \eta_{ij,n}^2
    &=
    \E  \left[
      \partial_{ij} U_{g,C_n}(Z_n^{\ast})
      \left(
        \widetilde{\Gamma}_{ij}(F_n)^{\ast}
        -
        \widetilde{Z}_{ij,n}^{\ast}
      \right)
    \right]
    \intertext{and}
    \eta_{ij,n}^3
    &=
    \E \left[
        \partial_{ij} U_{g,C_n}(Z_n^{\ast}) \widetilde{Z}_{ij,n}^{\ast}        
      \right].
    \end{align*}
  The integration by parts formula~\eqref{eq:10} and
  Lemma~\ref{lem:10}b) yield
  \begin{equation*}
    \eta_{ij,n}^3
    =
    \frac{1}{3} \sum_{k=1}^d \E \left[ Z_{k,n}^{\ast}
      \widetilde{Z}_{ij,n}^{\ast} \right] \E \left[ \partial_{ijk}
      U_{g,C_n}(Z_n^{\ast}) \right],
  \end{equation*}
  so that the proof is finished as soon as we have established that
  \begin{equation*}
      \eta_{ij,n}^1 \to 0 \quad \text{and} \quad \eta_{ij,n}^2 \to 0.
  \end{equation*}
  But this is an immediate consequence of the Lipschitz continuity of 
  $\partial_{ij} U_{g,C_n}$,  the fact that
  $\widetilde{\Gamma}_{ij}(F_n)^{\ast}$ has unit 
  variance (implying uniform integrability of 
  the sequences $(\widetilde{\Gamma}_{ij}(F_n)^{\ast})_{n \geq 0}$ and
  $(\widetilde{\Gamma}_{ij}(F_n)^{\ast} - \widetilde{Z}_{ij,n})_{n
    \geq 0}$) and the bound~\eqref{eq:76}.
\end{proof}

\begin{theorem}[Exact asymptotics for the converging variance case]
  \label{thm:5}
  Assume that $\Delta_{\Gamma}(F_n) \to 0$ and let $g \colon \R^d \to
  \R$ be three times differentiable with
  bounded derivatives up to order three.
  If there exists a covariance matrix $C$ such that
  $\Delta_C(F_n) \to 0$ and, for $1 \leq i,j \leq d$, the random
  sequences $\big( F_n, \widetilde{\Gamma}_{ij}(F_n) \big)_{n \geq 1}$
  converge in law to a centered Gaussian random vector $(Z,\widetilde{Z}_{ij})$
  whenever
  \begin{equation}
    \label{eq:19}
    \sqrt{\on{Var} \Gamma_{ij}(F_n)} + \abs{\E \left[ F_{i,n}
        F_{j,n} \right] - C_{ij}}
    \asymp
    \varphi_C(F_n)
  \end{equation}
  it holds that
  \begin{multline}
    \label{eq:42}
    \frac{1}{\varphi_C(F_n)}
    \left(
      \E \left[ g(F_n) \right]
      -
      \E \left[ g(Z) \right]
      -
      \frac{1}{2}
      \sum_{i,j=1}^d
      \left( \E \left[ F_{i,n}F_{j,n} \right] - C_{ij} \right)
      \E \left[ \partial_{ij} g(Z) \right]
    \right.
    \\ \left.
      -
      \frac{1}{3}
      \sum_{i,j,k=1}^{d}
      \sqrt{\on{Var}\Gamma_{ij}(F_n)}
      \rho_{ijk}
      \E \left[ \partial_{ijk} g(Z) \right]
    \right)
    \to 0.
  \end{multline}
  Here, the constants $\rho_{ijk}$ are defined by $\rho_{ijk} = \E
    \left[ \widetilde{Z}_{ij} Z_k \right]$ whenever~\eqref{eq:19}
    is true and $\rho_{ijk}=0$ otherwise.
\end{theorem}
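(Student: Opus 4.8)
The plan is to run the same scheme as in the proof of Theorem~\ref{thm:2}, but starting from Theorem~\ref{thm:3} applied with the \emph{fixed} covariance matrix $C$ rather than the covariance of $F_n$. This produces a splitting that separates the covariance-mismatch correction from the $\Gamma$-fluctuation correction. Concretely, Theorem~\ref{thm:3} gives
\begin{equation*}
  \E \left[ g(F_n) \right] - \E \left[ g(Z) \right]
  =
  \sum_{i,j=1}^d \E \left[ \partial_{ij} U_{g,C}(F_n) \left( \Gamma_{ij}(F_n) - C_{ij} \right) \right],
\end{equation*}
and since $\E \left[ \Gamma_{ij}(F_n) \right] = \E \left[ F_{i,n} F_{j,n} \right]$ I would write $\Gamma_{ij}(F_n) - C_{ij} = \sqrt{\on{Var} \Gamma_{ij}(F_n)} \, \widetilde{\Gamma}_{ij}(F_n) + \left( \E \left[ F_{i,n} F_{j,n} \right] - C_{ij} \right)$. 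Substituting this decomposition splits the right-hand side into a \emph{fluctuation sum} $S_n^{(1)}$, carrying the factors $\sqrt{\on{Var}\Gamma_{ij}(F_n)} \, \widetilde{\Gamma}_{ij}(F_n)$, and a \emph{covariance sum} $S_n^{(2)}$, carrying the factors $\E[F_{i,n}F_{j,n}] - C_{ij}$.

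Before treating the two sums, I would record two facts. First, since $\Delta_{\Gamma}(F_n) \to 0$ and $\Delta_C(F_n) \to 0$ we have $\varphi_C(F_n) \to 0$, so the bound~\eqref{eq:75} shows $\E[h(F_n)] \to \E[h(Z)]$ for every $\mathcal{C}^2$ test function $h$ with bounded derivatives, hence $F_n \xrightarrow{\mathcal{L}} Z$. Second, because $g$ has bounded derivatives up to order three, Lemma~\ref{lem:10}b) guarantees that $U_{g,C}$ is $\mathcal{C}^3$ with bounded derivatives, so each $\partial_{ij} U_{g,C}$ is bounded, continuous and Lipschitz. The covariance sum is then the easy part: after subtracting the target second-order term and dividing by $\varphi_C(F_n)$, the $(i,j)$-summand equals $\frac{\E[F_{i,n}F_{j,n}] - C_{ij}}{\varphi_C(F_n)} \big( \E[\partial_{ij} U_{g,C}(F_n)] - \E[\partial_{ij} U_{g,C}(Z)] \big)$; the first factor is bounded by $1$ because $|\E[F_{i,n}F_{j,n}] - C_{ij}| \leq \Delta_C(F_n) \leq \varphi_C(F_n)$, while $\E[\partial_{ij} U_{g,C}(F_n)] \to \E[\partial_{ij} U_{g,C}(Z)]$ by $F_n \xrightarrow{\mathcal{L}} Z$ together with boundedness and continuity of $\partial_{ij} U_{g,C}$. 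Formula~\eqref{eq:54} identifies $\E[\partial_{ij} U_{g,C}(Z)] = \tfrac{1}{2} \E[\partial_{ij} g(Z)]$, matching the claimed second-order coefficient.

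For the fluctuation sum $S_n^{(1)}$, I would copy the argument of Theorem~\ref{thm:2} almost verbatim, the only change being that the relevant sequences now converge in law to a \emph{fixed} Gaussian, so that the Skorokhod Representation Theorem~\ref{thm:12} applies directly in place of Theorem~\ref{thm:10}. After dividing by $\varphi_C(F_n)$, the summands for which~\eqref{eq:19} fails carry a prefactor $\sqrt{\on{Var}\Gamma_{ij}(F_n)}/\varphi_C(F_n)$ that tends to zero, and since the expectations are uniformly bounded (by~\eqref{eq:76} and the unit variance of $\widetilde{\Gamma}_{ij}(F_n)$) these summands are negligible. For each surviving $(i,j)$ I would transfer to a common probability space and decompose $\E[\partial_{ij} U_{g,C}(F_n) \widetilde{\Gamma}_{ij}(F_n)]$ into the three terms $\eta^1_{ij,n}, \eta^2_{ij,n}, \eta^3_{ij,n}$ exactly as in~\eqref{eq:5}; the first two vanish by the Lipschitz property of $\partial_{ij} U_{g,C}$, the bound~\eqref{eq:76} and the uniform integrability furnished by the unit variance of $\widetilde{\Gamma}_{ij}(F_n)$, while the third converges to $\E[\partial_{ij} U_{g,C}(Z) \widetilde{Z}_{ij}]$. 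Applying the integration-by-parts formula~\eqref{eq:10} to the Gaussian vector $(Z, \widetilde{Z}_{ij})$ (the derivative in the $\widetilde{Z}_{ij}$-direction vanishing since $\partial_{ij} U_{g,C}$ does not depend on it) and then~\eqref{eq:54} with $|\alpha| = 3$ gives $\E[\partial_{ij} U_{g,C}(Z) \widetilde{Z}_{ij}] = \tfrac{1}{3} \sum_k \rho_{ijk} \E[\partial_{ijk} g(Z)]$, which is exactly the claimed third-order contribution.

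The hard part, as in Theorem~\ref{thm:2}, is the bookkeeping around the comparability condition~\eqref{eq:19}: the relation $\asymp$ failing for some $(i,j)$ does not by itself yield a limit for the corresponding ratio, so to make ``these summands are negligible'' rigorous I would pass to an arbitrary subsequence along which every ratio $\sqrt{\on{Var}\Gamma_{ij}(F_n)}/\varphi_C(F_n)$ and $|\E[F_{i,n}F_{j,n}] - C_{ij}|/\varphi_C(F_n)$ converges (possible by compactness, all being bounded by $1$), observe that a failing index forces the corresponding limit to be $0$, and conclude convergence of the whole expression to $0$ along that subsequence; since the subsequence was arbitrary, the full sequence converges. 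The only genuinely new verification relative to Theorem~\ref{thm:2} is the covariance sum, which as indicated above is straightforward.
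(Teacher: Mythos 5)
Your proposal follows the paper's own proof in all essential respects: the paper likewise starts from Theorem~\ref{thm:3} with the fixed matrix $C$, inserts the decomposition $\Gamma_{ij}(F_n) - C_{ij} = \sqrt{\on{Var}\Gamma_{ij}(F_n)}\,\widetilde{\Gamma}_{ij}(F_n) + \left( \E \left[ F_{i,n}F_{j,n} \right] - C_{ij} \right)$ to obtain exactly your splitting into a covariance sum and a fluctuation sum (this is~\eqref{eq:45}), notes that all expectations are bounded via~\eqref{eq:76} and the unit variance of $\widetilde{\Gamma}_{ij}(F_n)$, discards the summands for which~\eqref{eq:19} fails, and for the surviving indices passes to the limit and applies~\eqref{eq:10} and Lemma~\ref{lem:10}b) to obtain the coefficients $\tfrac12 \E \left[ \partial_{ij}g(Z) \right]$ and $\tfrac13 \sum_k \rho_{ijk} \E \left[ \partial_{ijk}g(Z) \right]$. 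Your two deviations are either cosmetic or small improvements: for the fluctuation sum the paper does not use Skorokhod and the $\eta^1,\eta^2,\eta^3$ decomposition at all, but exploits the fact that here (unlike in Theorem~\ref{thm:2}) the limit law is fixed, so that convergence in law together with boundedness of $\partial_{ij}U_{g,C}$ and uniform integrability of $\widetilde{\Gamma}_{ij}(F_n)$ gives $\E \left[ \partial_{ij}U_{g,C}(F_n)\widetilde{\Gamma}_{ij}(F_n) \right] \to \E \left[ \partial_{ij}U_{g,C}(Z)\widetilde{Z}_{ij} \right]$ directly; and your derivation of $F_n \xrightarrow{\mathcal{L}} Z$ from $\varphi_C(F_n) \to 0$ via~\eqref{eq:75} makes the covariance sum converge for \emph{all} pairs $(i,j)$, not only those satisfying~\eqref{eq:19}, which is tidier than the paper's treatment of that sum.

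The one step that is not sound is precisely the one you flag as ``the hard part'': in your subsequence argument, the claim that a failing index forces the corresponding ratio limit to be $0$ is false. Since $\sqrt{\on{Var}\Gamma_{ij}(F_n)} + \abs{\E \left[ F_{i,n}F_{j,n} \right] - C_{ij}} \leq \varphi_C(F_n)$ always holds, failure of~\eqref{eq:19} only means that the $\liminf$ of this ratio is $0$; along an arbitrary convergent subsequence the ratio may tend to a positive limit (it can oscillate between $0$ and a positive constant), and for such a subsequence you have no hypothesis on $(F_n,\widetilde{\Gamma}_{ij}(F_n))$ and hence no control over $\E \left[ \partial_{ij}U_{g,C}(F_n)\widetilde{\Gamma}_{ij}(F_n) \right]$. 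To be fair, the paper's proof makes exactly the same leap --- it asserts that for failing indices ``the corresponding summand would vanish in the limit'' --- so your proposal is no less rigorous than the published argument; but your subsequence device does not repair the gap, it only restates it. The genuine repair is to read the hypothesis with the true dichotomy that the paper itself adopts later in Proposition~\ref{prop:6}, namely to require the joint convergence whenever
\begin{equation*}
  \limsup_n \, \Big( \sqrt{\on{Var}\Gamma_{ij}(F_n)} + \abs{\E \left[ F_{i,n}F_{j,n} \right] - C_{ij}} \Big) \Big/ \varphi_C(F_n) > 0;
\end{equation*}
under that reading, failing indices really do have ratios tending to $0$, and both your argument and the paper's go through verbatim.
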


\begin{proof}
  Theorem~\ref{thm:3} implies  
  \begin{multline}
    \label{eq:45}
    \frac{\E \left[ g(F_n) \right] - \E \left[ g(Z)
      \right]}{\varphi_C(F_n)}
    =
    \sum_{i,j=1}^d
    \left(
    \frac{\mu_{ij}(F_n) -C_{ij}}{\varphi_C(F_n)}
    \E \left[ \partial_{ij} U_{g,C}(F_n) \right]
    \right.
    \\ \left. +
    \frac{\sqrt{\on{Var}\Gamma_{ij}(F_n)}}{\varphi_C(F_n)}
    \E \left[ \partial_{ij} U_{g,C}(F_n) \,
      \widetilde{\Gamma}_{ij}(F_n) \right]
    \right).
  \end{multline}
  Arguing as in the proof of Theorem~\ref{thm:2}, we see
  that all expectations occuring in the sum on the right hand side
  of~\eqref{eq:45} are 
  bounded. Therefore, we can choose $1 \leq i,j \leq d$ and assume
  that~\eqref{eq:19} is true (as otherwise the corresponding summand
  would vanish   in   the limit). 
  By the boundedness of the second derivatives of $U_{g,C}$ (see~\eqref{eq:76})
  and our assumption 
  of  convergence in law, we get 
  \begin{equation*}
    \E \left[ \partial_{ij} U_{g,C}(F_n) \right] \to \E
    \left[ \partial_{ij} U_{g,C}(Z) \right]
  \end{equation*}
  and
  \begin{equation*}
    \E \left[ \partial_{ij} U_{g,C}(F_n) \widetilde{\Gamma}_{ij}(F_n)
    \right]
    \to
    \E \left[ \partial_{ij} U_{g,C}(Z) \widetilde{Z}_{ij}
    \right].
  \end{equation*}
  The integration by parts formula~\eqref{eq:10} and Lemma~\ref{lem:10}b) now yield
  \begin{equation*}
    \E \left[ \partial_{ij} U_{g,C}(Z) \right] = \frac{1}{2} \E
    \left[ \partial_{ij} g(Z) \right]
  \end{equation*}
  and
  \begin{equation*}
    \E \left[ \partial_{ij} U_{g,C}(Z) \widetilde{Z}_{ij}
    \right]
    =
    \frac{1}{3} \sum_{k=1}^d \E \left[ \widetilde{Z}_{ij} Z_k \right] \E \left[ \partial_{ijk} Z \right],
  \end{equation*}
  finishing the proof.
\end{proof}

\begin{remark}\label{s-22} 
  If the covariance $C$ of the Gaussian random variable $Z$
  is positive definite, the Hermite polynomials $H_{\alpha}(x,C)$
  form an orthonormal basis for the space $L^2(\R^d,\gamma_{C})$, where
  $\gamma_{C}$ is the density of $Z$, so that an expansion of the form
  $g(x) = \sum_{\alpha} H_{\alpha}(x,C)$ exists for all $x \in
  \R^d$. Thus, the integration by parts formula
  \begin{equation*}
    \E \left[ \partial_{\alpha} g(Z) \right] = \E
    \left[ g(Z) H_{\alpha}(Z,C) \right], 
  \end{equation*}
  valid for any multi-index $\alpha$ up to order three, yields a
  neccessary condition for the limit to be non-zero: $g$ must not be
  orthogonal (in $L^2(\R^d,\gamma_C)$) to all second- and third-order
  Hermite polynomials. 
\end{remark}

An immediate consequence of the Theorems~\ref{thm:2} and~\ref{thm:5} is
the following corollary.

\begin{corollary}[Sharp bounds and exact limits] 
  \label{cor:1} \hfill
  \begin{enumerate}[a)]
  \item   In the setting of Theorem~\ref{thm:2}, the $\liminf$ and
    $\limsup$ of the sequence
    \begin{equation*}
      \left(
    \frac{\abs{\E \left[ g(F_n) \right] - \E \left[ g(Z_n)
        \right]}}{\Delta_{\Gamma}(F_n)} \right)_{n \geq 1}    
\end{equation*}
coincide with those of the sequence
  \begin{equation*}
    \left(
    \frac{1}{3 \Delta_{\Gamma}(F_n)} \sum_{i,j,k=1}^d \sqrt{\on{Var}
      \Gamma_{ij}(F_n)} \rho_{ik,n} \E \left[ \partial_{ijk} g(Z_n)
    \right]
    \right)_{n \geq 1}.
  \end{equation*}
\item In the setting of Theorem~\ref{thm:5}, the $\liminf$ and
  $\limsup$ of the sequence
  \begin{equation}
    \label{eq:29}
    \left(
    \frac{
      \E \left[ g(F_n) \right] - \E \left[ g(Z) \right]
    }{
      \varphi_C(F_n)
    }
    \right)_{n \geq 1}
  \end{equation}
  coincide with those of
    \begin{multline}
      \label{eq:66}
      \left(
        \frac{1}{2}
        \sum_{i,j=1}^d
        \frac{
          \E \left[ F_{i,n} F_{j,n} \right] - C_{ij}
        }{
          \varphi_C(F_n)
        }
        \E \left[ \partial_{ij} g(Z) \right]
        \right.
        \\
        \left. +
        \frac{1}{3}
        \sum_{i,j,k=1}^d
        \frac{
          \sqrt{\on{Var}\Gamma_{ij}(F_n)}
        }{
          \varphi_C(F_n)
        }
        \rho_{ijk}
        \E \left[ \partial_{ijk} g(Z) \right]
      \right)_{n \geq 1}.
    \end{multline}
    In particular, if the sequence~\eqref{eq:66} converges, it
    provides the exact limit of the sequence~\eqref{eq:29}.
      \end{enumerate}
\end{corollary}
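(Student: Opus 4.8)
The plan is to read off the corollary as a direct consequence of the two preceding theorems via the elementary observation that two real sequences whose difference tends to zero share the same $\liminf$ and $\limsup$. Concretely, I would write $a_n$ for the normalized ratio whose limiting behaviour is to be described and $b_n$ for the explicit Edgeworth-type sequence; each of the two theorems, read in the form in which it is stated, asserts precisely that $a_n - b_n \to 0$. The analytic content is thus entirely contained in Theorems~\ref{thm:2} and~\ref{thm:5}, and the corollary is merely the extraction of the corresponding $\liminf$/$\limsup$ statements.

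For part a), set
\[
  a_n = \frac{\E\left[g(F_n)\right] - \E\left[g(Z_n)\right]}{\Delta_{\Gamma}(F_n)}, \qquad
  b_n = \frac{1}{3\Delta_{\Gamma}(F_n)}\sum_{i,j,k=1}^d \sqrt{\on{Var}\Gamma_{ij}(F_n)}\,\rho_{ijk,n}\,\E\left[\partial_{ijk}g(Z_n)\right].
\]
Then the conclusion~\eqref{eq:38} of Theorem~\ref{thm:2} is exactly the statement $a_n - b_n \to 0$. First I would record the trivial fact that for any two real sequences with $a_n - b_n \to 0$ one has $\liminf a_n = \liminf b_n$ and $\limsup a_n = \limsup b_n$; moreover, since $\bigl| \abs{a_n} - \abs{b_n} \bigr| \le \abs{a_n - b_n} \to 0$, the same identities hold for the sequences of absolute values. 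Applying this to the pair above yields part a), the absolute value appearing on the left-hand ratio being absorbed in this way.

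For part b), set $a_n = (\E\left[g(F_n)\right] - \E\left[g(Z)\right])/\varphi_C(F_n)$ and let $b_n$ denote the sequence displayed in~\eqref{eq:66}; then the conclusion~\eqref{eq:42} of Theorem~\ref{thm:5} is again the assertion $a_n - b_n \to 0$, and the same null-sequence argument gives the coincidence of $\liminf$ and $\limsup$. The concluding ``in particular'' is then immediate: if $b_n$ converges to some limit $\ell$, then $a_n = b_n + (a_n - b_n) \to \ell$, so the explicit sequence~\eqref{eq:66} supplies the exact limit of~\eqref{eq:29}.

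There is no genuine obstacle here, as all the estimation has already been done in the two theorems. The only point requiring (minimal) care is the bookkeeping of the absolute value in part a), which is handled by the reverse triangle inequality as indicated above; everything else is the elementary stability of $\liminf$ and $\limsup$ under perturbation by a sequence tending to zero.
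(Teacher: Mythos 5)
Your proposal is correct and coincides with the paper's intended argument: the paper offers no written proof at all, stating only that the corollary is ``an immediate consequence'' of Theorems~\ref{thm:2} and~\ref{thm:5}, and your reduction to the elementary fact that two sequences differing by a null sequence share the same $\liminf$ and $\limsup$ is precisely that consequence spelled out. Your reverse-triangle-inequality treatment of the absolute value in part a) is actually more careful than the corollary's own wording, which equates the $\liminf$/$\limsup$ of $\abs{a_n}$ with those of $b_n$ rather than of $\abs{b_n}$ (an apparent slip in the statement, since these need not agree when $b_n$ changes sign); what you prove is the corrected version.
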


If $d=1$ and the $F_n$ all have identical variances, the assumptions
of Corollary~\ref{cor:1}b) are always satisfied and one obtains an  
    analogue of Theorem~3.1 in~\cite{1196.60034}.

If the third-order moments $\mu_{\alpha}(F_n)$ of the random vectors
$F_n$ exist, the third order Edgeworth expansion
$\mathcal{E}_3(F_n,Z,g)$, introduced in section~\ref{s-16}, is well-defined
for any Gaussian random vector $Z$ and any three-times differentiable function $g$ with
bounded  derivatives up to order three. The next theorem shows how
these expansions can be used to increase the speed of convergence.

\begin{theorem}[One-term Edgeworth expansions]
  \label{thm:1}
  Let $g \colon \R^d \to \R$ be three times differentiable with
  bounded derivatives up to order three and assume that $F_n$ has
  finite moments up to order three for $n \geq 1$, and moreover
  \begin{equation}
    \label{eq:39}
    \left(
    \frac{
      \sum_{k=1}^d \mu_{ijk}(F_n)
      }{
        \sqrt{\on{Var} \Gamma_{ij}(F_n)}}
      \right)_{n \geq 1}
  \end{equation}
  is bounded whenever
  $\sqrt{\on{Var}\Gamma_{ij}(F_n)}/\Delta_{\Gamma}(F_n) \to 0$. 
  \begin{enumerate}[a)]
  \item If all assumptions of Theorem~\ref{thm:2}a) are satisfied, it holds that
  \begin{equation}
    \label{eq:6}
    \frac{
      \E \left[ g(F_n) \right] - \mathcal{E}_3(F_n,Z_n,g)
    }{
      \Delta_{\Gamma}(F_n)
    }
    \to 0.
  \end{equation}
  \item If all assumptions of Theorem~\ref{thm:2}b) are satisfied, it
    holds that 
  \begin{equation}
    \label{eq:23}
    \frac{
      \E \left[ g(F_n) \right] - \mathcal{E}_3(F_n,Z,g)
    }{
      \varphi_C(F_n)
    }
    \to 0.
  \end{equation}
  \end{enumerate}
\end{theorem}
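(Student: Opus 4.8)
The plan is to show, in both parts, that the generalized expansion $\mathcal{E}_3$ reproduces \emph{exactly} the first-order correction already isolated in Theorem~\ref{thm:2} (resp.\ Theorem~\ref{thm:5}), so that subtracting $\mathcal{E}_3(F_n,Z_n,g)$ instead of merely $\E\left[g(Z_n)\right]$ removes the leading term and leaves a genuine $o(\Delta_{\Gamma}(F_n))$ (resp.\ $o(\varphi_C(F_n))$) remainder. First I would compute $\mathcal{E}_3$ explicitly from Definition~\ref{def:3}. Since $F_n$ and $Z_n$ are centered, the order-one formal cumulants vanish; in case a) $Z_n$ has the same covariance as $F_n$, so the order-two formal cumulants vanish as well, whereas in case b) they equal $\E\left[F_{i,n}F_{j,n}\right]-C_{ij}$ and produce precisely the term $\tfrac12\sum_{i,j}(\E\left[F_{i,n}F_{j,n}\right]-C_{ij})\E\left[\partial_{ij}g(Z)\right]$ appearing in Theorem~\ref{thm:5}. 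Because the lower formal cumulants either vanish or (in case b) only pair with the vanishing first-order ones, Shiryaev's formula~\eqref{eq:31} collapses and the third-order formal moment is simply $\widetilde{\mu}_{e_i+e_j+e_k}=\kappa_{e_i+e_j+e_k}(F_n)=\mu_{ijk}(F_n)$. Hence the order-three part of $\mathcal{E}_3$ equals $\tfrac16\sum_{i,j,k}\mu_{ijk}(F_n)\E\left[\partial_{ijk}g(Z_n)\right]$.

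Second, I would relate this moment-based correction to the $\Gamma$-based correction. Two ingredients are needed, both available under the standing assumption $F_{i,n}\in\mathbb{D}^{1,4}$ (so that products $F_{j,n}F_{k,n}\in\mathbb{D}^{1,2}$). The first is the elementary Malliavin integration-by-parts identity $\mu_{ijk}(F_n)=\E\left[\Gamma_{ki}(F_n)F_{j,n}\right]+\E\left[\Gamma_{ji}(F_n)F_{k,n}\right]$, obtained by writing $\E\left[F_{i,n}F_{j,n}F_{k,n}\right]=\E\left[\langle D(F_{j,n}F_{k,n}),-DL^{-1}F_{i,n}\rangle_{\mathfrak{H}}\right]$ and expanding the derivative; this avoids the full cumulant formula of Theorem~\ref{thm:4}, which would require the unavailable regularity $\mathbb{D}^{3,8}$. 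The second is that, for every pair $(i,j)$ with $\sqrt{\on{Var}\Gamma_{ij}(F_n)}\asymp\Delta_{\Gamma}(F_n)$, the accompanying ACN Gaussian shares the first two moments of $\big(F_n,\widetilde{\Gamma}_{ij}(F_n)\big)$, so that $\sqrt{\on{Var}\Gamma_{ij}(F_n)}\,\rho_{ijk,n}=\E\left[\Gamma_{ij}(F_n)F_{k,n}\right]$ exactly. Using that $\E\left[\partial_{ijk}g(Z_n)\right]$ is symmetric in $(i,j,k)$, the symmetrization of the identity against these weights turns $\tfrac16\sum_{i,j,k}\mu_{ijk}(F_n)\E\left[\partial_{ijk}g(Z_n)\right]$ into $\tfrac13\sum_{i,j,k}\E\left[\Gamma_{ij}(F_n)F_{k,n}\right]\E\left[\partial_{ijk}g(Z_n)\right]$, which is exactly the Theorem~\ref{thm:2} correction extended over all pairs.

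Third, I would account for the pairs $(i,j)$ that Theorem~\ref{thm:2} discards, namely those with $\sqrt{\on{Var}\Gamma_{ij}(F_n)}/\Delta_{\Gamma}(F_n)\to0$, for which $\rho_{ijk,n}=0$. Grouping the Edgeworth correction by the pair $(i,j)$, the contribution of such a pair is controlled by $\sum_k\abs{\mu_{ijk}(F_n)}\,\norm{\partial_{ijk}g}_{\infty}$, and writing $\Delta_{\Gamma}(F_n)^{-1}=\big(\sqrt{\on{Var}\Gamma_{ij}(F_n)}\big)^{-1}\cdot\sqrt{\on{Var}\Gamma_{ij}(F_n)}/\Delta_{\Gamma}(F_n)$, the boundedness hypothesis~\eqref{eq:39} bounds the first factor while the second tends to zero. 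Thus each of the finitely many discarded pairs contributes $o(\Delta_{\Gamma}(F_n))$, and their total contribution is negligible. Combining the three steps gives~\eqref{eq:6}; for~\eqref{eq:23} the only change is that the order-two Edgeworth term is nonzero and has already been matched to the covariance term of Theorem~\ref{thm:5}, while the order-three analysis and the normalization by $\varphi_C(F_n)$ proceed identically.

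\textbf{The main obstacle} I anticipate is the exact combinatorial matching of the moment- and $\Gamma$-based corrections: one must carry it out with only $\mathbb{D}^{1,4}$ regularity (hence through the elementary third-moment identity rather than through Theorem~\ref{thm:4}) and simultaneously keep the bookkeeping of contributing versus fast-vanishing pairs consistent, which is exactly the point where hypothesis~\eqref{eq:39} enters. A secondary technical nuisance is reconciling the multi-index normalization of $\mathcal{E}_3$ in Definition~\ref{def:3} with the ordered-index sums of Theorems~\ref{thm:2} and~\ref{thm:5}, which I would handle once and for all by passing to the ordered form $\tfrac16\sum_{i,j,k}\mu_{ijk}(F_n)\E\left[\partial_{ijk}g(Z_n)\right]$ at the outset.
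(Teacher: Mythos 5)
Your proof is correct, and its overall architecture is the same as the paper's: both arguments reduce, via Theorem~\ref{thm:2} (resp.\ Theorem~\ref{thm:5}), to showing that the mismatch between the $\rho$-based correction and the third-order term of $\mathcal{E}_3$ vanishes after division by $\Delta_{\Gamma}(F_n)$ (resp.\ $\varphi_C(F_n)$); both identify, for the contributing pairs, $\sqrt{\on{Var}\Gamma_{ij}(F_n)}\,\rho_{ijk,n}$ with $\E\left[\Gamma_{ij}(F_n)F_{k,n}\right]$ and then symmetrize over $(i,j)$ against the symmetric weights $\E\left[\partial_{ijk}g(Z_n)\right]$; and both dispose of the discarded pairs by factoring $\Delta_{\Gamma}(F_n)^{-1}$ through $\big(\sqrt{\on{Var}\Gamma_{ij}(F_n)}\big)^{-1}$ and invoking~\eqref{eq:39} (this is precisely the paper's display~\eqref{eq:46}). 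The one genuinely different ingredient is how you obtain the moment identity: the paper quotes the Noreddine--Nourdin cumulant formula~\eqref{eq:60} (Theorem~\ref{thm:4}) to get $\mu_{ijk}(F_n)=\kappa_{ijk}(F_n)=\sqrt{\on{Var}\Gamma_{ij}(F_n)}\left(\rho_{ijk,n}+\rho_{jik,n}\right)$, whereas you derive it from scratch via the duality $\E\left[F_{i,n}G\right]=\E\left[\left\langle DG,-DL^{-1}F_{i,n}\right\rangle_{\mathfrak{H}}\right]$ applied to $G=F_{j,n}F_{k,n}$ together with the product rule. This buys something concrete: Theorem~\ref{thm:4} with $\abs{\alpha}=3$ demands $F_{i,n}\in\mathbb{D}^{3,8}$, which is not guaranteed by the standing assumption $F_{i,n}\in\mathbb{D}^{1,4}$ of Section~\ref{s-3}, while your integration by parts only needs $F_{j,n}F_{k,n}\in\mathbb{D}^{1,2}$, which does follow from $\mathbb{D}^{1,4}$; so your route closes a small regularity gap in the paper's own argument. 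Your decision to pass immediately to the ordered form $\tfrac{1}{6}\sum_{i,j,k}\mu_{ijk}(F_n)\E\left[\partial_{ijk}g(Z_n)\right]$ is also the correct reading of Definition~\ref{def:3}, being the form the paper itself uses in~\eqref{eq:43} and throughout its proof. One small imprecision, not a gap: in part b) the identification is asymptotic rather than exact, since $\rho_{ijk}$ is defined through the limiting Gaussian $(Z,\widetilde{Z}_{ij})$; there one has $\mu_{ijk}(F_n)/\sqrt{\on{Var}\Gamma_{ij}(F_n)}=\E\left[\widetilde{\Gamma}_{ij}(F_n)F_{k,n}\right]+\E\left[\widetilde{\Gamma}_{ji}(F_n)F_{k,n}\right]\to\rho_{ijk}+\rho_{jik}$ (the paper's~\eqref{eq:53}) rather than equality for each $n$, so part b) ``proceeds identically'' only after replacing exact cancellation by cancellation in the limit, exactly as the paper does.
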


\begin{remark}
  The third order Edgeworth expansion $\mathcal{E}_3(F_n,Z_n,g)$
  in~\eqref{eq:6} takes the explicit form
  \begin{equation}
    \label{eq:43}
    \mathcal{E}_3(F_n,Z_n,g)
    =
    \E \left[ g(Z_n) \right]
    +
    \sum_{i,j,k=1}^d
    \frac{\mu_{ijk}(F_n)}{3!}
    \E \left[ \partial_{ijk} g(Z_n) \right]
  \end{equation}
  whereas $\mathcal{E}_3(F_n,Z,g)$ in~\eqref{eq:23} is given by
  \begin{equation}
    \label{eq:44}
    \E \left[ g(Z) \right]
    +
    \sum_{i,j=1}^d
    \frac{C_{ij} - \mu_{ij}(F_n)}{2}
    \E \left[ \partial_{ij} g(Z) \right]
    +
    \sum_{i,j,k=1}^d
    \frac{\mu_{ijk}(F_n)}{3!}
    \E \left[ \partial_{ijk} g(Z) \right]
  \end{equation}
\end{remark}

\begin{proof}[Proof of Theorem~\ref{thm:1}]
  By Theorem~\ref{thm:2}a), it is
  sufficient to show that
  \begin{equation}
    \label{eq:46}
    \frac{1}{3}
    \sum_{i,j,k=1}^d
    \frac{\sqrt{\on{Var}\Gamma_{ij}(F_n)}}{\Delta_{\Gamma}(F_n)}
    \left(
    \rho_{ijk,n}
    -
    \frac{\mu_{ijk}(F_n)}{2 \sqrt{\on{Var}\Gamma_{ij}(F_n)}}
    \right)
    \E \left[ \partial_{ijk} g(Z_n) \right]
    = 0,
  \end{equation}
  
  Fix $1 \leq i,j \leq d$. If
  \begin{equation*}
    \frac{
      \sqrt{\on{Var}\Gamma_{ij}(F_n)}
    }{
      \Delta_{\Gamma}(F_n)
    }
    \to 0,
  \end{equation*}
  the quantity
  \begin{equation*}
    \frac{\mu_{ijk}(F_n)}{2 \sqrt{\on{Var}\Gamma_{ij}(F_n)}}
  \end{equation*}
  is bounded by assumption, so that the corresponding summand in the
  sum~\eqref{eq:46} vanishes in the limit.
  If $\limsup \sqrt{\on{Var}\Gamma_{ij}(F_n)}/\Delta_{\Gamma}(F_n)$ is
  positive,
  the sequence $(F_n,\widetilde{\Gamma}_{ij,n})_{n \geq 1}$ is
  ACN. Thus, for $n \geq 1$, there exists Gaussian random variables
  $(Z_n,\widetilde{Z}_{ij,n})$ with the same covariance as
  $(F_n,\widetilde{\Gamma}_{ij,n})$.
  By definition, we get
  \begin{equation*}
    \rho_{ijk,n}
    =
    \E \left[ Z_{k,n} \widetilde{Z}_{ij,n} \right]
    =
    \frac{
          \E \left[
      F_{k,n} \Gamma_{ij}(F_n)
    \right]
      }{\sqrt{\on{Var} \Gamma_{ij}(F_n)}}
    =
    \frac{
          \E \left[
      \Gamma_{ijk}(F_n)
    \right]
      }{\sqrt{\on{Var} \Gamma_{ij}(F_n)}}.
  \end{equation*}
  The cumulant formula~\eqref{eq:60} and the fact that
  $\on{Var} \Gamma_{ij}(F_n) = \on{Var} \Gamma_{ji}(F_n)$ now yields
  \begin{equation*}
    \rho_{ijk,n} + \rho_{jik,n}
    =
    \frac{
      \kappa_{ijk}(F_n)
    }{
      \, \sqrt{\on{Var} \Gamma_{ij}(F_n)}
    }
    =
    \frac{
      \mu_{ijk}(F_n)
    }{
      \, \sqrt{\on{Var} \Gamma_{ij}(F_n)}
    }
  \end{equation*}
  so that~\eqref{eq:46} follows.

  Likewise, by Theorem~\ref{thm:5}, it is sufficient for the proof
  of assertion   b) to show that
  \begin{equation}
    \label{eq:52}
    \frac{1}{3}
    \sum_{i,j,k=1}^d
    \frac{\sqrt{\on{Var}\Gamma_{ij}(F_n)}}{\varphi_C(F_n)}
    \left(
      \rho_{ijk} - \frac{\mu_{ijk}(F_n)}{2 \sqrt{\on{Var}\Gamma_{ij}(F_n)}}
    \right)
    \E \left[ \partial_{ijk} G(Z) \right]
    \to 0,
  \end{equation}
  Again, by assumption, if $1 \leq i,j \leq d$ is
  such that $\sqrt{\on{Var}\Gamma_{ij}(F_n)}/\varphi_C(F_n) \to 0$,
  the corresponding summand in~\eqref{eq:52} vanishes in the limit.
  If, on the other hand, $\sqrt{\on{Var}\Gamma_{ij}(F_n)} \asymp
  \varphi_C(F_n)$, the sequence $(F_n,\widetilde{\Gamma}_{ij}(F_n))_{n
    \geq 1}$ converges in law to $(Z,\widetilde{Z}_{ij})$.
  Therefore, 
  \begin{equation}
    \label{eq:53}
    \E \left[ \widetilde{\Gamma}_{ij,n} F_{k,n} \right] \to
    \E \left[ \widetilde{Z}_{ij} Z_k \right]
    =
    \rho_{ijk}
  \end{equation}
  for $1 \leq k \leq d$. 
  The cumulant  formula~\eqref{eq:60} gives
  \begin{equation*}
    \frac{
      \mu_{ijk}(F_n)
    }{
      \, \sqrt{\on{Var} \Gamma_{ij}(F_n)}
    }
    =
    \frac{
      \kappa_{ijk}(F_n)
    }{
      \, \sqrt{\on{Var} \Gamma_{ij}(F_n)}
    }
    =
      \E \left[ \widetilde{\Gamma}_{ij,n} F_{k,n} \right]
      +
      \E \left[ \widetilde{\Gamma}_{ji,n} F_{k,n} \right],
  \end{equation*}
  which together with~\eqref{eq:53} implies that
  \begin{equation*}
    \frac{
      \mu_{ijk}(F_n)
    }{
      \, \sqrt{\on{Var} \Gamma_{ij}(F_n)}
    }
    \to
    \rho_{ijk} + \rho_{jik}.
  \end{equation*}
  This immediatiely yields~\eqref{eq:52}, finishing the proof.
\end{proof}

\section{The case of multiple integrals}
\label{s-6}

In this section, we specialize our results to the case where the
components of the sequence $(F_n)_{n 
  \geq 1}$ are vectors of multiple integrals. As in the previous section,
we fix an integer $d \geq 1$ and study a sequence $(F_n)_{n \geq
  1}=(F_{1,n},\ldots,F_{d,n})_{n \geq 1}$ of $\R^d$-valued random
vectors, but now each component $F_{i,n}$ is a multiple integral of
the form $F_{i,n}=I_{q_i}(f_{i,n})$ where $g_i \geq 1$ and $f_{i,n}
\in \mathfrak{H}^{\odot q_i}$. Recall the definitions for the random
variables $\Gamma_{ij}(F_n)$, $\widetilde{\Gamma}_{ij}(F_n)$ and
$Z_n$, which were given in the first paragraph of the previous
section.

Let us begin by deducing explicit representations of some of the
crucial quantities of the last section. 
Using the product formula~\eqref{prodform} and the orthogonality
   property~\eqref{orthogonality} of multiple integrals, we see that
    \begin{multline}
      \label{eq:16}
      \Gamma_{ij}(F_n) = 
      \frac{1}{q_j} \left\langle DF_{i,n},DF_{j,n}
      \right\rangle_{\mathfrak{H}}
      \\=
      \sum_{r=1}^{q_i \land q_j}
      q_i \, \beta_{q_i-1,q_j-1}(r-1) \,
      I_{q_i+q_j-2r}(f_{i,n} \widetilde{\otimes}_r f_{j,n})
    \end{multline}
    and
    \begin{multline}
           \label{eq:27}
      \on{Var} \, \Gamma_{ij}(F_n)
      \\ =
      \sum_{r=1}^{q_i \land q_j - \delta_{q_iq_j}}
      (q_i+q_j-2r)! \, q_i^2 \, \beta_{q_i-1,q_j-1}^2(r-1)
           \, \norm{f_{i,n} \widetilde{\otimes}_r
             f_{j,n}}_{\mathfrak{H}^{\otimes
               (q_i+q_j-2r)}}^2,
         \end{multline}
where the positive constants $\beta_{a,b}(r)$ are defined by~\eqref{eq:86}.

As all constants in the sum on the right hand side of~\eqref{eq:27}
are positive, this implies that
\begin{equation}
  \label{eq:81}
  \on{Var}\Gamma_{ij}(F_n) \asymp
  \sum_{r=1}^{q_i \land q_j - \delta_{q_iq_j}}
  \norm{f_{i,n}
  \widetilde{\otimes}_r f_{j,n}}^2_{\mathfrak{H}^{\otimes (q_i+q_j-2r)}}
\end{equation}
and therefore
\begin{equation}
  \label{eq:8}
  \Delta_{\Gamma}(F_n) \asymp
  \left(
  \sum_{i,j=1}^d \sum_{r=1}^{q_i \land q_j - \delta_{q_iq_j}}
  \norm{f_{i,n}
  \widetilde{\otimes}_r f_{j,n}}_{\mathfrak{H}^{\otimes
    (q_i+q_j-2r)}}^2
  \right)^{1/2}.
\end{equation}

If, for some integers $i,j,k$ with $1 \leq i,j,k \leq d$, it holds
that $r:=\frac{q_i+q_j-q_k}{2} 
\in \{ 1,2,\ldots,q_i \land q_j \}$, formula~\eqref{eq:80} and the
Cauchy-Schwarz inequality yield
\begin{align*}
  \mu_{ijk}(F_n) = \kappa_{e_i+e_j+e_k}{F_n} \leq c \norm{f_i
    \widetilde{\otimes}_r f_j}_{\mathfrak{H}^{\otimes (q_i+q_j-2r)}}
  \, \norm{f_k}_{\mathfrak{H}^{\otimes q_k}}.
\end{align*}
Combining this with~\eqref{eq:81}, we see that the
quantity~\eqref{eq:39} from Theorem~\ref{thm:1} is bounded and
therefore one-term Edgeworth expansions are always possible whenever
the corresponding ACN-conditions from Theorems~\ref{thm:2}
and~\ref{thm:5} are verified. Thus we have proven the following proposition.

\begin{proposition}[Exact asymptotics and Edgeworth expansions for multiple integrals]
  \label{prop:6}
  In the above framework, let $g \colon \R^d \to \R$ be three times
  differentiable with bounded derivatives up to order three.
  \begin{enumerate}[a)]
  \item If, for  $1 \leq i,j \leq d$, the random sequence 
    $(F_n,\widetilde{\Gamma}_{ij}(F_n))_{n \geq 1}$ is ACN
whenever
\begin{equation*}
\limsup_n \sqrt{\on{Var}\Gamma_{ij}(F_n)}/\Delta_{\Gamma}(F_n) > 0,   
\end{equation*}
  it
  holds that
  \begin{equation*}
    \frac{
      \E \left[ g(F_n) \right] - \mathcal{E}_3 (F_n,Z_n,g)
    }{
      \Delta_{\Gamma}(F_n)
    }
    \to
    0.
  \end{equation*}
\item If there exists a covariance matrix $C$ such that $\Delta_C(F_n)
  \to 0$ and, for $1 \leq i,j \leq d$, the random sequence    $(F_n,\widetilde{\Gamma}_{ij}(F_n))_{n \geq 1}$ is ACN
 whenever
 \begin{equation*}
     \limsup_n \sqrt{\on{Var}\Gamma_{ij}(F_n)}+\abs{\E \left[ F_{i,n}F_{j,n}
    \right] - C_{ij}}/\varphi_C(F_n) > 0,
 \end{equation*}
 it holds that
  \begin{equation*}
    \frac{
      \E \left[ g(F_n) \right] - \mathcal{E}_3(F_n,Z,g)
    }{
      \varphi_C(F_n)
    }
    \to 0.
  \end{equation*}
  \end{enumerate}
\end{proposition}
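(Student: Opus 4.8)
The plan is to obtain both parts as direct consequences of Theorem~\ref{thm:1}, so that the only genuine work is to check that its boundedness hypothesis~\eqref{eq:39} holds automatically for vectors of multiple integrals. Granting this, part~a) will follow from Theorem~\ref{thm:1}a) and part~b) from Theorem~\ref{thm:1}b): the ACN-conditions imposed in the proposition (ACN of $(F_n,\widetilde{\Gamma}_{ij}(F_n))$ for every pair $i,j$ with positive $\limsup$ of the relevant ratio) are at least as strong as those inherited by Theorem~\ref{thm:1} from Theorems~\ref{thm:2} and~\ref{thm:5}, since a positive $\liminf$ (which is what $\sqrt{\on{Var}\Gamma_{ij}(F_n)} \asymp \Delta_{\Gamma}(F_n)$ amounts to, the reverse domination being automatic) forces a positive $\limsup$.

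To verify~\eqref{eq:39}, I would fix $1 \le i,j,k \le d$ and first use that $F_n$ is centered, so that $\mu_{ijk}(F_n) = \kappa_{e_i+e_j+e_k}(F_n)$. The explicit cumulant formula~\eqref{eq:80} then shows that this quantity vanishes unless $r := (q_i+q_j-q_k)/2$ is an integer lying in $\{1,\ldots,q_i \land q_j\}$, in which case a Cauchy--Schwarz estimate gives
\[
  \abs{\mu_{ijk}(F_n)}
  \le
  c \, \norm{f_{i,n} \widetilde{\otimes}_r f_{j,n}}_{\mathfrak{H}^{\otimes(q_i+q_j-2r)}}
  \norm{f_{k,n}}_{\mathfrak{H}^{\otimes q_k}} .
\]
The factor $\norm{f_{k,n}}$ is uniformly bounded, because $\E[F_{k,n}^2] = q_k!\,\norm{f_{k,n}}^2$ and the covariances of $F_n$ are bounded. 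The point is then to recognise $\norm{f_{i,n} \widetilde{\otimes}_r f_{j,n}}$ as the square root of one of the summands of $\on{Var}\Gamma_{ij}(F_n)$ in~\eqref{eq:27}, so that~\eqref{eq:81} yields $\norm{f_{i,n}\widetilde{\otimes}_r f_{j,n}} \preccurlyeq \sqrt{\on{Var}\Gamma_{ij}(F_n)}$ and hence that the ratio in~\eqref{eq:39} is bounded for every pair $i,j$ (in fact without needing the extra hypothesis that $\sqrt{\on{Var}\Gamma_{ij}(F_n)}/\Delta_\Gamma(F_n) \to 0$).

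The step I expect to be the main obstacle is precisely this matching of indices, because the variance sum~\eqref{eq:27} runs only up to $q_i \land q_j - \delta_{q_iq_j}$, i.e. the top contraction order $r = q_i$ is dropped when $q_i = q_j$. I would settle it by distinguishing cases: when $q_i \ne q_j$ the admissible range is all of $\{1,\ldots,q_i\land q_j\}$ and there is nothing to check, while when $q_i = q_j =: q$ the cumulant is non-zero only for even $q_k \ge 2$, so that $r = q - q_k/2 \le q-1$ still lies in the reduced range $\{1,\ldots,q-1\}$; the constraint $q_k \ge 1$ is exactly what prevents the non-vanishing third cumulant from being attached to the discarded order. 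With this bookkeeping in place, the contraction norm controlling $\mu_{ijk}(F_n)$ is always one of those entering $\on{Var}\Gamma_{ij}(F_n)$, the hypothesis~\eqref{eq:39} is verified, and an appeal to Theorem~\ref{thm:1} finishes both statements.
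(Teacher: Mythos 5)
Your proposal is correct and follows essentially the same route as the paper: the paper also deduces Proposition~\ref{prop:6} by verifying the boundedness hypothesis~\eqref{eq:39} of Theorem~\ref{thm:1} through the cumulant formula~\eqref{eq:80}, the Cauchy--Schwarz inequality, and the asymptotic equivalence~\eqref{eq:81}. The only difference is that you spell out two details the paper leaves implicit --- the boundedness of $\norm{f_{k,n}}_{\mathfrak{H}^{\otimes q_k}}$ and the bookkeeping showing that when $q_i=q_j$ the relevant contraction order $r=q_i-q_k/2\leq q_i-1$ still falls within the truncated range of the variance sum~\eqref{eq:27} --- which is a welcome clarification rather than a departure.
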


Sufficient conditions for the sequences
$(F_n,\widetilde{\Gamma}_{ij}(F_n))_{n \geq 1}$ to be ACN are given by
the following proposition.

\begin{proposition}
  \label{prop:7}
  The sequence $(F_n,\widetilde{\Gamma}_{ij}(F_n))_{n \geq 1}$ is ACN
if
\begin{equation}
  \label{eq:32}
  \sum_{i=1}^d \sum_{r=1}^{q_i-1}
  \, 
  \norm{f_{i,n}
  \otimes_r f_{i,n}}_{\mathfrak{H}^{\otimes 2(q_i-r)}} \to 0
\end{equation}
and
\begin{equation}
    \label{eq:26}
    \frac{
      \sum\limits_{r=1}^{q_i \land q_j - \delta_{q_iq_j}}
      \sum\limits_{s=1}^{q_i+q_j-2r-1}
      \norm{(f_{i,n} \widetilde{\otimes}_r f_{j,n}) \otimes_s
        (f_{i,n} \widetilde{\otimes}_r
        f_{j,n})}_{\mathfrak{H}^{\otimes 2 (q_i+q_j-2r-1 -
          s)}}
    }{
      \sum\limits_{r=1}^{q_i \land q_j - \delta_{q_iq_j}}
  \norm{f_{i,n}
  \widetilde{\otimes}_r f_{j,n}}^2_{\mathfrak{H}^{\otimes (q_i+q_j-2r)}}
}
\to 0.
\end{equation}
\end{proposition}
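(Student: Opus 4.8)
The plan is to identify the vector $(F_n, \widetilde{\Gamma}_{ij}(F_n))$ as one all of whose components possess a finite chaos expansion, and then to invoke Lemma~\ref{lem:1} with the choice $G_n = \widetilde{\Gamma}_{ij}(F_n)$. First I would read off the chaos expansion of $\widetilde{\Gamma}_{ij}(F_n)$ from the explicit formula~\eqref{eq:16}. The order-zero summand there occurs exactly when $q_i = q_j$ and equals the mean $\E[\Gamma_{ij}(F_n)] = \E[F_{i,n}F_{j,n}]$; subtracting it (which is precisely the centering in the definition of $\widetilde{\Gamma}_{ij}$) and dividing by $\sqrt{\on{Var}\Gamma_{ij}(F_n)}$ gives
\[
  \widetilde{\Gamma}_{ij}(F_n)
  =
  \sum_{r=1}^{q_i \land q_j - \delta_{q_iq_j}}
  c_{r,n} \, I_{q_i+q_j-2r}\big( f_{i,n} \widetilde{\otimes}_r f_{j,n} \big),
  \qquad
  c_{r,n} = \frac{q_i \, \beta_{q_i-1,q_j-1}(r-1)}{\sqrt{\on{Var}\Gamma_{ij}(F_n)}}.
\]
Since $r \mapsto q_i+q_j-2r$ is injective, this displays $\widetilde{\Gamma}_{ij}(F_n)$ in the form $\sum_{k=1}^M I_k(g_{k,n})$ required by Lemma~\ref{lem:1}, with $M = q_i+q_j-2$, where $g_{k,n} = c_{r,n}(f_{i,n}\widetilde{\otimes}_r f_{j,n})$ when $k=q_i+q_j-2r$ and $g_{k,n}=0$ otherwise.

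Next I would check the two hypotheses of Lemma~\ref{lem:1}. Its first hypothesis~\eqref{eq:25}, imposed on the vector $F_n$, is verbatim condition~\eqref{eq:32}, so nothing remains to be shown there. For the second hypothesis~\eqref{eq:12}, substituting the kernels $g_{k,n}$ just computed yields
\[
  \sum_{k=2}^M \sum_{s=1}^{k-1} \norm{g_{k,n} \otimes_s g_{k,n}}
  =
  \sum_{r=1}^{q_i \land q_j - \delta_{q_iq_j}}
  c_{r,n}^2
  \sum_{s=1}^{q_i+q_j-2r-1}
  \norm{(f_{i,n}\widetilde{\otimes}_r f_{j,n}) \otimes_s (f_{i,n}\widetilde{\otimes}_r f_{j,n})}.
\]
The one genuine point is the control of the normalization $c_{r,n}^2$, which contains $1/\on{Var}\Gamma_{ij}(F_n)$. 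Here I would use the variance estimate~\eqref{eq:81} (itself a consequence of the exact expression~\eqref{eq:27} and the positivity of its coefficients), which gives $\on{Var}\Gamma_{ij}(F_n) \asymp \sum_{r'} \norm{f_{i,n}\widetilde{\otimes}_{r'} f_{j,n}}^2$ and hence $c_{r,n}^2 \asymp \big(\sum_{r'} \norm{f_{i,n}\widetilde{\otimes}_{r'} f_{j,n}}^2\big)^{-1}$, uniformly over the finitely many values of $r$. Inserting this estimate, the right-hand side above is $\asymp$ to the left-hand side of~\eqref{eq:26}, so that the assumed convergence~\eqref{eq:26} $\to 0$ forces~\eqref{eq:12} for $G_n = \widetilde{\Gamma}_{ij}(F_n)$.

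With both hypotheses in force, Lemma~\ref{lem:1} yields at once that $(F_n, \widetilde{\Gamma}_{ij}(F_n))$ is ACN, completing the proof. I expect the main (if essentially bookkeeping) obstacle to be the second step: correctly extracting the kernels of the centered and normalized $\Gamma_{ij}(F_n)$ from the product formula and matching both the range of the inner contraction index $s$ and the variance normalization to the precise ratio~\eqref{eq:26}. One should also dispose of the degenerate case $\on{Var}\Gamma_{ij}(F_n)=0$, in which $\widetilde{\Gamma}_{ij}(F_n)$ is not defined by the above formula but the relevant component is almost surely constant and the ACN assertion is trivial.
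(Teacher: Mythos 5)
Your proposal is correct and follows essentially the same approach as the paper: the paper's proof of Proposition~\ref{prop:7} consists of the single sentence that it is ``a direct consequence of~\eqref{eq:16}, \eqref{eq:27} and Lemma~\ref{lem:1}'', which is precisely the route you take. Your write-up merely supplies the bookkeeping the paper leaves implicit --- reading off the finite chaos expansion of $\widetilde{\Gamma}_{ij}(F_n)$ from~\eqref{eq:16}, using~\eqref{eq:27} (equivalently~\eqref{eq:81}) to control the normalization, and matching~\eqref{eq:26} to hypothesis~\eqref{eq:12} of Lemma~\ref{lem:1} --- together with the sensible remark about the degenerate case $\operatorname{Var}\Gamma_{ij}(F_n)=0$.
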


\begin{proof}
  This is a direct consequence of~\eqref{eq:16}, \eqref{eq:27} and Lemma~\ref{lem:1}.
\end{proof}

If we assume that $F_i=I_2(f_i)$ for $f_i \in \mathfrak{H}^{\odot 2}$,
$1 \leq i \leq d$, we can state  conditions for the  
ACN property of $(F_n,\widetilde{\Gamma}_{ij}(F_n))_{n \geq 1}$ which
only involve cumulants. This is due to the well known formula
(see~\cite{MR871252})
\begin{multline}
  \kappa_{k \times e_i}(F)
  = 
  2^{k-1} (k-1)! \, \on{Tr} \left( H_{f_i}^k \right)
  \\ =
  2^{k-1} (k-1)! \left\langle f_i \otimes_1^{(k-1)} f_i,f_i
  \right\rangle_{\mathfrak{H}}
  =  \label{eq:34}
  2^{k-1} (k-1)! \, \sum_{n=1}^{\infty} \lambda_{f_i,n}^k,
\end{multline}
where $H_f \colon \mathfrak{H} \to \mathfrak{H}$ is the
Hilbert-Schmidt operator defined by $H_f(g) = f \otimes_1 g$ and
$\{\lambda_{f,n} \colon n \geq 1 \}$ are its eigenvalues.

In particular, we have
\begin{multline}
  \label{eq:30}
   \kappa_{4e_i}(F)
  = 2^3 \, 3! \, \left\langle f_i \otimes_1^{(3)} f_i,f_i \right\rangle_{\mathfrak{H}}
  \\= 2^3 \, 3! \, \left\langle f_i \otimes_1 f_i, f_i \otimes_1 f_i
  \right\rangle_{\mathfrak{H}}
  = 2^3 \, 3! \, \norm{f_i \otimes_1 f_i}_{\mathfrak{H}}^2
\end{multline}
and
\begin{equation}
  \label{eq:48}
 \kappa_{8e_i}(F)
  =
  2^7 \, 7! \, \left\langle f_i \otimes_1^{(7)} f_i,f_i
  \right\rangle_{\mathfrak{H}}
  =
  2^7 \, 7! \, \norm{(f_i \otimes_1 f_i) \otimes_1 (f_i \otimes_1
    f_i)}_{\mathfrak{H}}^2.
\end{equation} 

Using Propositions~\ref{prop:6}, \ref{prop:7} and the 
Cauchy-Schwarz inequality, we can now deduce the following 
Proposition. We will however omit these elementary 
calculations, as the Proposition will also follow as a special case 
from Theorem~\ref{thm:6} of the forthcoming section. 

\begin{proposition}
  \label{prop:1}
  Let $(F_n) = (F_{1,n},\ldots,F_{d,n})$ be a sequence of random vectors whose
  components are elements of 
  the second chaos, $g \colon \R^d \to \R$ be three times differentiable with 
  bounded derivatives up to order three and assume that  
  \begin{equation}
    \label{eq:102}
    \sum_{i=1}^d \kappa_{4e_i}(F_n) \to 0.
  \end{equation}
  \begin{enumerate}[a)]
  \item If, for $1 \leq i \leq d$, it holds that $\kappa_{4e_i}(F_n)
    \asymp \sum_{i=1}^d \kappa_{4e_i}(F_n)$ implies
    \begin{equation}
      \label{eq:107}
      \frac{\kappa_{8e_i}(F_n)}{\kappa_{4e_i}(F_n)^2}
      \to 0,
    \end{equation}
    then
    \begin{equation*}
      \frac{\E \left[ g(F_n) \right] -
        \mathcal{E}_3(F_n,Z_n,g)}{\Delta_{\Gamma}(F_n)}
      \to
      0.
    \end{equation*}
  \item If there exists a covariance matrix $C$ such that
    $\Delta_C(F_n) \to 0$ and, for $1 \leq i \leq d$, the
    convergence~\eqref{eq:107} is implied by
    \begin{multline*}
     \kappa_{4e_i}(F_n) + \abs{\kappa_{e_i+e_j}(F_n)-C_{ij}} \\ \asymp
    \sum_{i=1}^d \kappa_{4e_i}(F_n) + \sqrt{\sum_{i,j=1}^d \left(
        \kappa_{e_i+e_j}(F_n) - C_{ij} \right)^2},
    \end{multline*}
    then
    \begin{equation*}
      \frac{\E \left[ g(F_n) \right] -
        \mathcal{E}_3(F_n,Z,g)}{\varphi_{C}(F_n)}
      \to
      0.
    \end{equation*}
  \end{enumerate}
\end{proposition}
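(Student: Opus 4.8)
The plan is to derive the Proposition from Proposition~\ref{prop:6} by checking, through Proposition~\ref{prop:7}, the ACN property of the relevant sequences $(F_n,\widetilde{\Gamma}_{ij}(F_n))_{n\geq 1}$, after translating every contraction into a trace of a power of the self-adjoint Hilbert--Schmidt operators $H_{f_{i,n}}$. Since all components lie in the second chaos we have $q_i=2$, and writing $A=H_{f_{i,n}}$, $B=H_{f_{j,n}}$ with eigenvalues $\lambda_{f_{i,n},k}$, formulas~\eqref{eq:30} and~\eqref{eq:48} read $\kappa_{4e_i}(F_n)\asymp\on{Tr}(A^4)$ and $\kappa_{8e_i}(F_n)\asymp\on{Tr}(A^8)$. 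Moreover $f_{i,n}\otimes_1 f_{j,n}$ is the kernel of the composition $AB$, and its symmetrization $f_{i,n}\widetilde{\otimes}_1 f_{j,n}$ the kernel of $S:=\tfrac12(AB+BA)$, so that by~\eqref{eq:81} one has $\on{Var}\Gamma_{ij}(F_n)\asymp\norm{f_{i,n}\widetilde{\otimes}_1 f_{j,n}}^2=\on{Tr}(S^2)$.

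First I would establish two structural estimates. Since symmetrization is a norm-decreasing projection and $\on{Tr}(A^2B^2)\leq\sqrt{\on{Tr}(A^4)\on{Tr}(B^4)}$ by the trace Cauchy--Schwarz inequality,
\begin{equation*}
  \on{Var}\Gamma_{ij}(F_n)\asymp\norm{f_{i,n}\widetilde{\otimes}_1 f_{j,n}}^2\preccurlyeq\on{Tr}(A^2B^2)\preccurlyeq\sqrt{\kappa_{4e_i}(F_n)\,\kappa_{4e_j}(F_n)}.
\end{equation*}
As the diagonal terms satisfy $\on{Var}\Gamma_{ii}(F_n)\asymp\kappa_{4e_i}(F_n)$, summing over $i,j$ and using $\sqrt{\kappa_{4e_i}\kappa_{4e_j}}\leq\tfrac12(\kappa_{4e_i}+\kappa_{4e_j})$ gives $\Delta_{\Gamma}(F_n)^2\asymp\sum_{k=1}^d\kappa_{4e_k}(F_n)$; in particular assumption~\eqref{eq:102} is exactly condition~\eqref{eq:32} of Proposition~\ref{prop:7}.

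Next I would isolate the pairs that actually require attention. For part a) these are the pairs with $\limsup_n\sqrt{\on{Var}\Gamma_{ij}(F_n)}/\Delta_{\Gamma}(F_n)>0$, that is $\on{Var}\Gamma_{ij}(F_n)\asymp\sum_k\kappa_{4e_k}(F_n)$. Together with the upper bound just obtained this forces $\kappa_{4e_i}(F_n)\asymp\kappa_{4e_j}(F_n)\asymp\sum_k\kappa_{4e_k}(F_n)$, so both $i$ and $j$ are \emph{dominant} indices and the hypothesis of the Proposition furnishes~\eqref{eq:107} for each of them. The heart of the matter is then the single surviving summand of~\eqref{eq:26}, whose numerator equals $\norm{S^2}_{\text{H.S.}}=\sqrt{\on{Tr}(S^4)}$. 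Expanding $S^4$ into traces of words of length eight in $A$ and $B$ (four letters of each kind) and applying the Schatten--H\"older inequality with all exponents equal to $8$ bounds every such trace by $\on{Tr}(A^8)^{1/2}\on{Tr}(B^8)^{1/2}$, whence, using $\on{Tr}(S^2)\asymp\kappa_{4e_i}(F_n)\asymp\kappa_{4e_j}(F_n)$ on dominant pairs,
\begin{equation*}
  \frac{\norm{(f_{i,n}\widetilde{\otimes}_1 f_{j,n})\otimes_1(f_{i,n}\widetilde{\otimes}_1 f_{j,n})}}{\norm{f_{i,n}\widetilde{\otimes}_1 f_{j,n}}^2}=\frac{\sqrt{\on{Tr}(S^4)}}{\on{Tr}(S^2)}\preccurlyeq\left(\frac{\kappa_{8e_i}(F_n)}{\kappa_{4e_i}(F_n)^2}\cdot\frac{\kappa_{8e_j}(F_n)}{\kappa_{4e_j}(F_n)^2}\right)^{1/4}.
\end{equation*}
This tends to zero by~\eqref{eq:107}, so~\eqref{eq:26} holds, Proposition~\ref{prop:7} yields the required ACN property, and Proposition~\ref{prop:6}a) gives the claim.

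For part b) the argument is the same after replacing $\Delta_{\Gamma}(F_n)$ by $\varphi_C(F_n)=\Delta_{\Gamma}(F_n)+\Delta_C(F_n)$ and observing that $\kappa_{e_i+e_j}(F_n)=\E\left[F_{i,n}F_{j,n}\right]$, so that $\Delta_C(F_n)^2=\sum_{i,j}(\kappa_{e_i+e_j}(F_n)-C_{ij})^2$; the pairs genuinely needing joint convergence are again those with $\sqrt{\on{Var}\Gamma_{ij}(F_n)}\asymp\varphi_C(F_n)$ (for the remaining pairs the weight $\sqrt{\on{Var}\Gamma_{ij}}/\varphi_C$ multiplying the corresponding term vanishes), and for these the same dominant-index reduction makes~\eqref{eq:107} available, so that Proposition~\ref{prop:6}b) applies. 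The step I expect to be the main obstacle is precisely this central estimate: one must combine the upper bound $\on{Tr}(S^4)\preccurlyeq\sqrt{\on{Tr}(A^8)\on{Tr}(B^8)}$ for the \emph{symmetrized} product $S=\tfrac12(AB+BA)$ with the matching lower bound $\on{Tr}(S^2)\asymp\kappa_{4e_i}(F_n)\asymp\kappa_{4e_j}(F_n)$ valid exactly on the dominant pairs, since it is the balance of these two that collapses the self-normalized ratio~\eqref{eq:26} onto the pure fourth-to-eighth cumulant ratios~\eqref{eq:107}.
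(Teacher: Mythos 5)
Your argument is correct, but it takes a genuinely different route from the one the paper actually records. The paper explicitly \emph{omits} the ``elementary calculations'' based on Propositions~\ref{prop:6}, \ref{prop:7} and Cauchy--Schwarz, and instead obtains Proposition~\ref{prop:1} as the special case $q_i=2$ of Theorem~\ref{thm:6}, i.e.\ through the majorizing-integral machinery and the graph-theoretic ``cut-mirror-merge'' iteration of Cauchy--Schwarz, noting in the Remark following Theorem~\ref{thm:6} that \eqref{eq:24} is trivial for $q_i=2$ and that conditions (i) and (ii) translate into \eqref{eq:102} and \eqref{eq:107} via \eqref{eq:30} and \eqref{eq:48}. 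What you do is precisely the omitted direct verification, organized through the operator calculus of the second chaos: $f_{i,n}\otimes_1 f_{j,n}$ is the kernel of $AB$, the symmetrization is the kernel of $S=\tfrac12(AB+BA)$, so $\on{Var}\Gamma_{ij}(F_n)\asymp\on{Tr}(S^2)$, the single surviving summand of \eqref{eq:26} is $\sqrt{\on{Tr}(S^4)}/\on{Tr}(S^2)$, and the Schatten--H\"older bound $\on{Tr}(S^4)\preccurlyeq\sqrt{\on{Tr}(A^8)\on{Tr}(B^8)}$ combined with $\on{Tr}(S^2)\asymp\kappa_{4e_i}(F_n)\asymp\kappa_{4e_j}(F_n)$ on contributing pairs collapses \eqref{eq:26} onto \eqref{eq:107}. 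Your intermediate steps check out: the bound $\on{Var}\Gamma_{ij}(F_n)\preccurlyeq\sqrt{\kappa_{4e_i}(F_n)\kappa_{4e_j}(F_n)}$, the identification $\Delta_\Gamma(F_n)^2\asymp\sum_k\kappa_{4e_k}(F_n)$, and the resulting fact that both indices of a contributing pair must be dominant are exactly the right mechanism, and they are what makes the hypothesis of the Proposition usable for mixed pairs $i\neq j$. The trade-off between the two routes: yours is elementary, self-contained and yields explicit exponents, but is tied to $q=2$; Theorem~\ref{thm:6} covers arbitrary and mixed chaotic orders at the price of the graph combinatorics. Your route also has the advantage of being unambiguous in this case, since the literal specialization of \eqref{eq:85} to $q_i=2$ is delicate (the inner sum $\sum_{s=1}^{q_i-r-1}$ is empty there).

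One caveat concerns part b). Proposition~\ref{prop:6}b), read as a black box, demands the ACN property of $(F_n,\widetilde\Gamma_{ij}(F_n))$ also for pairs whose contribution to $\varphi_C(F_n)$ comes from the covariance deviation $\abs{\E\left[F_{i,n}F_{j,n}\right]-C_{ij}}$ alone, while $\sqrt{\on{Var}\Gamma_{ij}(F_n)}/\varphi_C(F_n)\to 0$; your hypotheses do not give \eqref{eq:107} for such pairs, and the joint ACN can genuinely fail for them. You dismiss these pairs by appealing to the vanishing weight multiplying the corresponding term, which is an appeal to the internal structure of the proof of Theorem~\ref{thm:5} rather than to the statement of Proposition~\ref{prop:6}b). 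The refinement you invoke is true, but to complete it one must also note that the covariance-deviation summands require the marginal convergence $F_n\to\mathcal{N}(0,C)$, so that $\E\left[\partial_{ij}U_{g,C}(F_n)\right]\to\tfrac12\E\left[\partial_{ij}g(Z)\right]$; in your setting this is automatic, since \eqref{eq:102} gives $\on{Var}\Gamma_{ii}(F_n)\to0$ for every $i$, hence $F_n$ is ACN by the Fourth Moment Theorem~\ref{thm:7}, and $\Delta_C(F_n)\to0$ upgrades this to convergence in law to $\mathcal{N}(0,C)$. With that one observation added, part b) is complete. (The residual looseness between ``$\limsup>0$'' in Proposition~\ref{prop:6} and ``$\asymp$'' in your dominance conditions is shared by the paper's own statements and proofs, so I do not count it against you.)
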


Note that in the case $d=1$, part b) becomes a (weaker) version of
Proposition~3.8 from~\cite{1196.60035}. Thus, in the second chaos, the
joint speed of convergence can be compeletely characterized by the
coordinate sequences. 

\subsection{Majorizing integrals and the role of mixed contractions}
\label{s-17}

We now turn to the question whether the mixed contractions (i.e. those for
which $i \neq j$) in the numerator and denominator of
condition~\eqref{eq:26} are neccessary to ensure that 
$(F_n,\widetilde{\Gamma}_{ij}(F_n))_{n \geq 1}$ is ACN. 
It will turn out that in some cases, most notably the one where all
kernels are non-negative, we can replace condition~\eqref{eq:26}  by a
similar fraction containing only non-mixed contractions. In these 
cases, the ``interplay'' of the different kernels thus has no
influence on the speed of convergence.  

To be able to develop our theory, we assume that $\mathfrak{H} = 
L^2(A,\mathcal{A},\nu)$, where $(A,\mathcal{A})$ is a Polish space,
$\mathcal{A}$ is the associated Borel $\sigma$-field and the measure
$\nu$ is positive, $\sigma$-finite and non-atomic. This can be done
without loss of generality (see~\cite[section2.2]{MR2118863}). The
components of the random vectors $F_n$ under examination are still
multiple integrals. 

If $f_i \in \mathfrak{H}^{\odot q_i}$ and $f_j \in
\mathfrak{H}^{\odot q_j}$ are two symmetric kernels and $1 \leq r \leq
q_i \land q_j$, then according to formula~\eqref{eq:20} we can write
\begin{equation}
  \label{eq:69}
  \norm{f_i \widetilde{\otimes}_r f_j}_{\mathfrak{H}^{\otimes
      q_i+q_j-2r}}^2
  =
  \sum_{u=0}^{(q_i \land q_j) -r}
  c_u \,
  G_r(f_i,f_j,u),
\end{equation}
where the $c_u$ are some positive universal constants not depending on
$f_i$ and $f_j$ and each $G_r(f_i,f_j,u)$
is an integral of the form 
\begin{multline}\label{eq:1}
  \int_{A^u}
  \int_{A^u}
  \int_{A^r}
  \int_{A^r}
  \int_{A^n}
  \int_{A^m}  
  f_i(v,x,y)
  f_i(v,\widetilde{x},\widetilde{y})
  f_j(\widetilde{x},y,w)
  f_j(x,\widetilde{y},w)
  \\
  \diff{\mu^{\otimes m}(v)}
  \diff{\mu^{\otimes n}(w)}
  \diff{\mu^{\otimes r}(x)}
  \diff{\mu^{\otimes r}(\widetilde{x})}
  \diff{\mu^{\otimes u}(y)}
  \diff{\mu^{\otimes u}(\widetilde{y})},  
\end{multline}
where $m = q_i-r-u$ and $n = q_j-r-u$.
We can visualize each of these integrals by an integer weighted, undirected graph
\begin{equation}\label{eq:17}
  \begin{tikzpicture}[description/.style={fill=white,inner
            sep=2pt},
          baseline=(current bounding box.center)]
          \matrix (m) [matrix of math nodes, row sep=4em,
          column sep=4em, text height=1.5ex, text depth=0.25ex
          , ampersand replacement = \&]
          { f_i \& f_i \\
            f_j \& f_j \\ };
          \path[-,font=\scriptsize]
          (m-1-1) edge node[auto] {$q_i-r-u$} (m-1-2)
          (m-1-2) edge node[auto] {$r$} (m-2-2)
          (m-2-2) edge node[auto] {$q_j-r-u$} (m-2-1)
          (m-2-1) edge node[auto] {$r$} (m-1-1)
          (m-2-1) edge node[above] {$u$} (m-1-2)
          (m-1-1) edge node[below] {$u$} (m-2-2)
          ;
        \end{tikzpicture},
\end{equation}
by identifying each kernel occuring in the integral with a vertex
and drawing an edge with weight $l$ between two functions, if $l$
variables of these two functions coincide. For example, the edge with
label $q_i-r-u$ in the above graph
corresponds to the variable $v$ in the integral~\eqref{eq:1}. Due to
the symmetry of the kernels involved, we can freely translate back and
forth between the explicit notation~\eqref{eq:1} and the visual
notation~\eqref{eq:17} without losing any information. To avoid
cumbersome treatment of degenerate cases, we adopt the convention that
edges with weight zero are non-existent.

Analogously, we can write
\begin{equation}
  \label{eq:14}
  \norm{(f_i \widetilde{\otimes}_r f_j) \otimes_s (f_i
    \widetilde{\otimes}_r
    f_j)}_{\mathfrak{H}^{\otimes 2(q_i+q_j-2r-s)}}^2
  =
  \sum_{\lambda \in \Lambda}
  c_{\lambda} \, G_{r,s}(f_i,f_j,\lambda),
\end{equation}
where $\Lambda$ is some finite index set, the $c_{\lambda}$ are
positive constants and the $G_{r,s}(f_i,f_j,\lambda)$ are integrals of
the form
\begin{equation*}
  \int
  f_i(\cdot) f_i(\cdot) f_i(\cdot) f_i(\cdot)
  f_j(\cdot) f_j(\cdot) f_j(\cdot) f_j(\cdot)
\end{equation*}
involving four copies of the kernels $f_i$ and $f_j$,
respectively, which are obtained by first choosing $2(q_i+q_j)$ pairs
of variables, then identifying variables that have been paired and
finally integrating with respect to the $2(q_i+q_j)$ resulting
variables. The only constraint one has to obey is that two variables
that stem from the same kernel must not be paired.
One could write this with a lot more rigour (using, for
example, diagrams and partitions, see~\cite{MR2791919}, or a visual
method similar to ours, see~\cite{marinucci_central_2008}) but for our 
purposes it is enough to know that each of this integrals can be
visualized as a graph with eight vertices (four of them 
labeled with $f_i$  and $f_j$, respectively) that contains 
\begin{equation*}
            \begin{tikzpicture}[auto,description/.style={fill=white,inner
            sep=2pt},
          baseline=(current bounding box.center)]
          \matrix (m) [matrix of math nodes, row sep=3em,
          column sep=4em, text height=1.5ex, text depth=0.25ex
          , ampersand replacement = \&]
          { f_i \& f_j \& f_i \& f_j \\
            f_i \& f_j \& f_i \& f_j \\ };
          \path[-,font=\scriptsize]
          (m-1-1) edge node[auto] {$r$} (m-1-2)
          (m-2-2) edge node[auto] {$r$} (m-2-1)
          (m-1-3) edge node[auto] {$r$} (m-1-4)
          (m-2-4) edge node[auto] {$r$} (m-2-3)
          (m-1-1) edge [bend left] node[auto] {$u_1$} (m-1-4)
          (m-1-1) edge [bend left] node[below] {$u_2$} (m-1-3)
          (m-1-2) edge [bend left] node[below] {$u_3$} (m-1-4)
          (m-1-2) edge node[auto] {$u_4$} (m-1-3)
          (m-2-1) edge [bend right] node[auto] {$v_1$} (m-2-4)
          (m-2-1) edge [bend right] node[above] {$v_2$} (m-2-3)
          (m-2-2) edge [bend right] node[above] {$v_3$} (m-2-4)
          (m-2-2) edge node[below] {$v_4$} (m-2-3)
          ;
        \end{tikzpicture}
\end{equation*}
as a subgraph, where $\sum_i u_i = \sum_{i} v_i = s$ and some of
the $u_i$ and $v_i$ can be zero (recall our convention that an edge
with weight zero is non-existent). Note that in the original graph
there are always edges (to be precise, exactly $2(q_i+q_j-r)-s$ of
them) connecting the ``upper'' and ``lower'' groups of four
edges.

If we are given an integral $G$ occuring in the
representations~\eqref{eq:69} or~\eqref{eq:14}, we can arbitrarily
divide the four or eight kernels appearing in the integrands into two
sets $A$ and $B$ and use the Cauchy-Schwarz inequality to obtain a bound of the
type $\abs{G} \leq G_1^{1/2} G_2^{1/2}$, where the integrals $G_1$ and
$G_2$ only involve kernels in the set $A$ and $B$, respectively. Of
course, $G_1$ and $G_2$ can also be visualized by graphs. In fact,
these two graphs can be obtained without analytical detour by the  
following, purely visual ``cut-mirror-merge''-operation on the graph
of $G$:   

\begin{enumerate}[1)]
\item Divide the vertices of $G$ into two groups $A$ and $B$.
\item Erase all edges that connect vertices of different groups, thus
  obtaining two subgraphs. We refer to a vertex adjacent to an edge
  that has been erased as a \emph{bordering vertex}.
\item For each of these two subgraphs, 
  take a copy of this subgraph and connect each bordering vertex of
  the   subgraph with the corresponding vertex in the copy by an edge.
  The   weight of this edge is equal to the sum of the weights of all
  erased edges that were adjacent to this vertex and have been erased
  in step two.
\end{enumerate}

For example, starting from the integral given by the
graph~\eqref{eq:17}, if we choose two identical sets consisting of one $f_i$
and one $f_j$, respectively, the resulting graphs for $G_1$ and $G_2$
are identical as well and given by
\begin{equation*}
  \begin{tikzpicture}[description/.style={fill=white,inner
            sep=2pt},
          baseline=(current bounding box.center)]
          \matrix (m) [matrix of math nodes, row sep=4em,
          column sep=4em, text height=1.5ex, text depth=0.25ex
          , ampersand replacement = \&]
          { f_i \& f_i \\
            f_j \& f_j \\ };
          \path[-,font=\scriptsize]
          (m-1-1) edge node[auto] {$q_i-r$} (m-1-2)
          (m-1-2) edge node[auto] {$r$} (m-2-2)
          (m-2-2) edge node[auto] {$q_j-r$} (m-2-1)
          (m-2-1) edge node[auto] {$r$} (m-1-1)
          ;.
        \end{tikzpicture}
\end{equation*}
Translated back into the language of integrals, this is just the  
well-known fact that
\begin{equation*}
  \norm{f_i \widetilde{\otimes}_r f_j }_{\mathfrak{H}^{\otimes (q_i+q_j-2r)}}
  \leq
  \norm{f_i \otimes_r f_j}_{\mathfrak{H}^{\otimes (q_i+q_j-2r)}},
\end{equation*}
which can of course be proven much more concisely by a direct
calculation. However, the advantage of working with graphs reveals
itself when dealing with the integrals on the right hand side
of~\eqref{eq:14}. We will see this when proving the forthcoming
Majorizing Lemma, that plays a key role in this section. 

If $f_i=f_j$, some integrals appearing in the sum on the right hand
side  of~\eqref{eq:69} are of special interest, as they dominate all 
others (in a sense that will be made clear in the sequel). We title
them \emph{majorizing integrals}. They are defined as follows.

\begin{definition}[Majorizing integrals]
  \label{def:2}
  For $f \in \mathfrak{H}^{\odot q}$, $1 \leq r \leq q-1$ and $0 \leq
  m \leq q-r$, the \emph{majorizing integrals} $M_r(f,m)$ are given by
  the graph
  \begin{equation}
    \label{eq:82}
          \begin{tikzpicture}[auto,description/.style={fill=white,inner
            sep=2pt},
          baseline=(current bounding box.center)]
          \matrix (m) [matrix of math nodes, row sep=3em,
          column sep=4em, text height=1.5ex, text depth=0.25ex
          , ampersand replacement = \&]
          { f \& f \& f \& f \\
            f \& f \& f \& f \\ };
          \path[-,font=\scriptsize]
          (m-1-1) edge node[auto] {$r$} (m-1-2)
          (m-2-2) edge node[auto] {$r$} (m-2-1)
          (m-1-3) edge node[auto] {$r$} (m-1-4)
          (m-2-4) edge node[auto] {$r$} (m-2-3)
          (m-1-1) edge [bend left] node[above] {$m$} (m-1-3)
          (m-1-2) edge [bend left] node[above] {$m$} (m-1-4)
          (m-2-1) edge [bend right] node[below] {$m$} (m-2-3)
          (m-2-2) edge [bend right] node[below] {$m$} (m-2-4)
          (m-1-1) edge node[auto] {$q-r-m$} (m-2-1)
          (m-1-2) edge node[auto] {$q-r-m$} (m-2-2)
          (m-1-3) edge node[auto] {$q-r-m$} (m-2-3)
          (m-1-4) edge node[auto] {$q-r-m$} (m-2-4)
          ;
        \end{tikzpicture}.
      \end{equation}
\end{definition}

Observe that the integrals $M_r(f,m)$ are non-negative and appear in
the expansion of the type~\eqref{eq:14} for the norm $\norm{(f \widetilde{\otimes}_r
  f) \otimes_s (f \widetilde{\otimes}_r f)}_{\mathfrak{H}^{\otimes 4(q-r)-2s}}$.
Also, by  grouping the inner four vertices in the graph~\eqref{eq:82} and  
applying ``cut-mirror-merge'', we see that
\begin{equation*}
  M_r(f,m) \leq \norm{f \otimes_r f}_{\mathfrak{H}^{\otimes 2(q-r)}}^4,
\end{equation*}
with equality if $m \in \{0,q-r\}$. 

\begin{lemma}[Majorizing Lemma]
  \label{lem:7}
  Let $q_i$ and $q_j$ be two positive integers and $f_i \in
  \mathfrak{H}^{\odot q_i}$, $f_j \in \mathfrak{H}^{\odot q_j}$ be two
  symmetric kernels. For given integers $r$ and $s$ with $1 \leq r
  \leq q_i \land q_j - \delta_{q_i,q_j}$ and $1 \leq s \leq q_i + q_j
  - 2r-1$, let $G_{r,s}(f_i,f_j,\lambda)$ be one of the summands in the
  representation~\eqref{eq:14}. Then, for $1 \leq k \leq 4$, there
  exists integers $n_k \in \{0,1,\ldots,q_i-r\}$ and $m_k \in \{
  0,1,\ldots,q_j-r\}$ with $m_k+n_k=s$, such that
  \begin{equation}
  \label{eq:11}
  G_{r,s}(f_i,f_j,\lambda)^8
  \leq
  \prod_{k=1}^4
  \big(
    M_r(f_i,m_k)
    \,
    M_r(f_j,n_k)
  \big).
\end{equation}
\end{lemma}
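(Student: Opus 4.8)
The plan is to establish \eqref{eq:11} by iterating the ``cut-mirror-merge'' operation---that is, the graphical form of the Cauchy-Schwarz inequality introduced above---over three successive levels applied to the graph of $G_{r,s}(f_i,f_j,\lambda)$. A single cut-mirror-merge squares the integral and bounds it by a product of two integral graphs, so carrying the construction out as a balanced binary tree of depth three produces, for a suitable choice of the partitions used at each node,
\begin{equation*}
  G_{r,s}(f_i,f_j,\lambda)^8 \leq \prod_{\ell=1}^8 G^{(\ell)},
\end{equation*}
where the eight leaf graphs $G^{(\ell)}$ are obtained from the graph of $G_{r,s}$ by cutting, mirroring and merging. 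The whole argument then reduces to choosing the partitions so that four of the leaves coincide with majorizing integrals $M_r(f_i,m_k)$ and the other four with $M_r(f_j,n_k)$, and to checking that the parameters satisfy $m_k+n_k=s$.

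At the root I would separate the two kernels: partition the eight vertices of $G_{r,s}$ into the four copies of $f_i$ and the four copies of $f_j$ and erase every edge joining an $f_i$-vertex to an $f_j$-vertex, obtaining $G_{r,s}^2\le G^{(i)}G^{(j)}$ with $G^{(i)}$ (respectively $G^{(j)}$) an eight-vertex graph carrying only copies of $f_i$ (respectively $f_j$). The point of this step is the merge bookkeeping: each order-$r$ pairing edge of \eqref{eq:14} is an $f_i$--$f_j$ edge, hence is erased, and its weight is re-deposited onto the new edge joining a bordering vertex to its mirror copy; the inter-kernel order-$r$ contractions thus turn into intra-kernel edges, which is precisely the mechanism that will generate the $r$-edges of the majorizing integral \eqref{eq:82}. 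The weights of the erased cross edges, whose upper and lower totals are each $s$, are redistributed in the same way, and it is this redistribution that splits $s$ into an $f_i$-part and an $f_j$-part.

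On the two remaining levels I would reorganise $G^{(i)}$ and $G^{(j)}$ independently, applying cut-mirror-merge twice to each so that
\begin{equation*}
  \big(G^{(i)}\big)^4 \le \prod_{k=1}^4 M_r(f_i,m_k)
  \qquad\text{and}\qquad
  \big(G^{(j)}\big)^4 \le \prod_{k=1}^4 M_r(f_j,n_k);
\end{equation*}
raising the root bound to the fourth power and multiplying these two estimates yields \eqref{eq:11}. Here the partitions are chosen to arrange the vertices of each single-kernel graph into the two-by-two block pattern of \eqref{eq:82}, so that after mirroring and merging the surviving free legs organise into the skip-edges of weight $m_k$ (respectively $n_k$) and the vertical edges of weight $q_i-r-m_k$ (respectively $q_j-r-n_k$). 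Since the integrals $M_r$ are non-negative (as noted after Definition~\ref{def:2}), all factors that appear are genuinely of the majorizing form.

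The inequality itself being a mechanical consequence of Cauchy-Schwarz, the main obstacle is the weight bookkeeping. One must show that for \emph{every} admissible configuration $\lambda$ of the cross edges $u_1,\dots,u_4$ and $v_1,\dots,v_4$ (subject to $\sum_i u_i=\sum_i v_i=s$) the three levels of partitions can be chosen so that each leaf is exactly of the form \eqref{eq:82} and so that the skip-weights obey $m_k+n_k=s$ for every $k$. Verifying this uniformly in $\lambda$---that no distribution of the cross weights obstructs the reorganisation, and that the mirror-merge always deposits the correct total weight at each bordering vertex---is the delicate combinatorial heart of the argument; once the explicit correspondence between the erased edges and the parameters $m_k,n_k$ is set up, the identity $m_k+n_k=s$ is forced by the fact that the upper and lower cross edges each carry total weight $s$.
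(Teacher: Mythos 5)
Your proposal has a genuine gap, and it lies exactly at the step you defer to ``weight bookkeeping'': the order of the cuts matters, and your choice of root cut breaks the argument. You cut by kernel \emph{first}, grouping the four $f_i$-vertices against the four $f_j$-vertices. At that stage the erased edges at a given $f_i$-vertex are not only its weight-$r$ contraction edge but also the mixed skip edges (in the paper's notation, $u_1$ and $u_4$ are $f_i$--$f_j$ edges) and whatever mixed vertical pairing edges connect the upper and lower halves. The cut-mirror-merge operation deposits the \emph{sum} of all erased weights at a bordering vertex onto a \emph{single} new edge, so the merge edge at that vertex has weight $r+u_1+(\text{mixed vertical weights})$, not $r$. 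Since every subsequent cut-mirror-merge can only sum weights further (it never splits an edge into parts), no later application of Cauchy--Schwarz can recover edges of weight exactly $r$. Hence, for a general configuration $\lambda$ (where $u_1,u_4$ and the mixed vertical edges are nonzero), the leaves of your binary tree cannot be brought to the form $M_r(f_i,m_k)$, $M_r(f_j,n_k)$ of Definition~\ref{def:2}, and the claimed factorization \eqref{eq:11} does not come out of your construction. Your assertion that ``the inter-kernel order-$r$ contractions turn into intra-kernel edges, which is precisely the mechanism that will generate the $r$-edges of the majorizing integral'' is the false step: they turn into intra-kernel edges of weight $r$ \emph{plus} the other erased cross-weights, lumped together irreversibly.

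The paper's proof performs the same three Cauchy--Schwarz applications but in the reverse order, precisely to avoid this. First it cuts top against bottom (erasing only the vertical pairing edges, which total $2(q_i+q_j-r)-s$), then left against right (erasing only the $u_i$-, respectively $v_i$-, skip edges, whose merge weights $\widetilde u_1+\widetilde u_2=s$ become the future skip parameters), and only at the \emph{last} level does it separate $f_i$ from $f_j$. By then each vertex of the cube-shaped graph $G_{2,1}'$ has exactly one cross-kernel edge, namely its $r$-edge, so the final mirror-merge deposits weight exactly $r$ on each new edge and the two resulting graphs are literally $M_r(f_i,\widetilde u_1)$ and $M_r(f_j,\widetilde u_2)$ with $\widetilde u_1+\widetilde u_2=s$; this is also the only place where the constraint $m_k+n_k=s$ actually comes from. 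If you want to salvage your write-up, you must swap the root and leaf cuts so that the kernel separation happens after the graph has been homogenized; with the kernel-first cut the ``delicate combinatorial heart'' you postpone is not merely delicate but impossible.
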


\begin{proof}
  We will iteratively apply Cauchy-Schwarz (using the visual method
  developed above) to obtain the chain 
  \begin{equation}
    \label{eq:93}
    G_{r,s}(f_i,f_j,\lambda)^8
    \leq
    \left( G_1' \, G_1'' \right)^4
    \leq
    \left( G_{2,1}' \, G_{2,2}' \, G_{2,1}'' \, G_{2,2}'' \right)^2
    \leq
    \prod_{k=1}^4
  \big(
    M_r(f_i,m_k)
    \,
    M_r(f_j,n_k)
  \big),
\end{equation}
  where all primed and double-primed quantities are integrals which
  will be described by their corresponding graphs. 
  
As already mentioned, a graph associated with an integral
$G_{r,s}(f_i,f_j,\lambda)$ in the sum~\eqref{eq:14} has eight vertices
(four of them labeled with $f_i$, the other four with $f_j$) and contains
\begin{equation*}
            \begin{tikzpicture}[auto,description/.style={fill=white,inner
            sep=2pt},
          baseline=(current bounding box.center)]
          \matrix (m) [matrix of math nodes, row sep=3em,
          column sep=4em, text height=1.5ex, text depth=0.25ex
          , ampersand replacement = \&]
          { f_i \& f_j \& f_i \& f_j \\
            f_i \& f_j \& f_i \& f_j \\ };
          \path[-,font=\scriptsize]
          (m-1-1) edge node[auto] {$r$} (m-1-2)
          (m-2-2) edge node[auto] {$r$} (m-2-1)
          (m-1-3) edge node[auto] {$r$} (m-1-4)
          (m-2-4) edge node[auto] {$r$} (m-2-3)
          (m-1-1) edge [bend left] node[auto] {$u_1$} (m-1-4)
          (m-1-1) edge [bend left] node[below] {$u_2$} (m-1-3)
          (m-1-2) edge [bend left] node[below] {$u_3$} (m-1-4)
          (m-1-2) edge node[auto] {$u_4$} (m-1-3)
          (m-2-1) edge [bend right] node[auto] {$v_1$} (m-2-4)
          (m-2-1) edge [bend right] node[above] {$v_2$} (m-2-3)
          (m-2-2) edge [bend right] node[above] {$v_3$} (m-2-4)
          (m-2-2) edge node[below] {$v_4$} (m-2-3)
          ;
        \end{tikzpicture}
\end{equation*}
as a subgraph, where $\sum_i u_i = \sum_{i} v_i = s$ and some of
the $u_i$ and $v_i$ can be zero (recall our convention that an edge with weight
zero is non-existent).
We now apply the Cauchy-Schwarz inequality for the first time,
grouping the four vertices connected by the $u_i$- and
$v_i$-edges. The resulting bounding integrals $G_1'$ and $G_1''$ are
given by the graph 
\begin{equation*}
            \begin{tikzpicture}[auto,description/.style={fill=white,inner
            sep=2pt},
          baseline=(current bounding box.center)]
          \matrix (m) [matrix of math nodes, row sep=3em,
          column sep=4em, text height=1.5ex, text depth=0.25ex
          , ampersand replacement = \&]
          { f_i \& f_j \& f_i \& f_j \\
            f_i \& f_j \& f_i \& f_j \\ };
          \path[-,font=\scriptsize]
          (m-1-1) edge node[auto] {$r$} (m-1-2)
          (m-2-2) edge node[auto] {$r$} (m-2-1)
          (m-1-3) edge node[auto] {$r$} (m-1-4)
          (m-2-4) edge node[auto] {$r$} (m-2-3)
          (m-1-1) edge [bend left] node[auto] {$u_1$} (m-1-4)
          (m-1-1) edge [bend left] node[below] {$u_2$} (m-1-3)
          (m-1-2) edge [bend left] node[below] {$u_3$} (m-1-4)
          (m-1-2) edge node[auto] {$u_4$} (m-1-3)
          (m-2-1) edge [bend right] node[auto] {$u_1$} (m-2-4)
          (m-2-1) edge [bend right] node[above] {$u_2$} (m-2-3)
          (m-2-2) edge [bend right] node[above] {$u_3$} (m-2-4)
          (m-2-2) edge node[below] {$u_4$} (m-2-3)
          (m-1-1) edge node[auto] {$a_1$} (m-2-1)
          (m-1-2) edge node[auto] {$a_2$} (m-2-2)
          (m-1-3) edge node[auto] {$b_1$} (m-2-3)
          (m-1-4) edge node[auto] {$b_2$} (m-2-4)

          ;
        \end{tikzpicture},
\end{equation*}
where $a_1+a_2+b_1+b_2=2(q_i+q_j-r)-s>0$       
and the same graph with $u_i$ replaced by $v_i$. We now continue to
apply Cauchy-Schwarz to $G_1'$. The exact same operations then have to
be performed with $G_1''$ to obtain the final result.

In the graph of $G_1'$, we group the four vertices connected by $a_i$- and
$b_i$-edges respectively and then apply Cauchy-Schwarz. This yields
bounding integrals  $G_{2,1}'$, given by a ``cube'' of the form
\begin{equation}
  \label{eq:92}
            \begin{tikzpicture}[auto,description/.style={fill=white,inner
            sep=2pt},
          baseline=(current bounding box.center)]
          \matrix (m) [matrix of math nodes, row sep=3em,
          column sep=4em, text height=1.5ex, text depth=0.25ex
          , ampersand replacement = \&]
          { f_i \& f_j \& f_i \& f_j \\
            f_i \& f_j \& f_i \& f_j \\ };
          \path[-,font=\scriptsize]
          (m-1-1) edge node[auto] {$r$} (m-1-2)
          (m-2-2) edge node[auto] {$r$} (m-2-1)
          (m-1-3) edge node[auto] {$r$} (m-1-4)
          (m-2-4) edge node[auto] {$r$} (m-2-3)
          (m-1-1) edge [bend left] node[above] {$\widetilde{u}_1$} (m-1-3)
          (m-1-2) edge [bend left] node[above] {$\widetilde{u}_2$} (m-1-4)
          (m-2-1) edge [bend right] node[below] {$\widetilde{u}_1$} (m-2-3)
          (m-2-2) edge [bend right] node[below] {$\widetilde{u}_2$} (m-2-4)
          (m-1-1) edge node[auto] {$a_1$} (m-2-1)
          (m-1-2) edge node[auto] {$a_2$} (m-2-2)
          (m-1-3) edge node[auto] {$a_1$} (m-2-3)
          (m-1-4) edge node[auto] {$a_2$} (m-2-4)
          ;
        \end{tikzpicture},
\end{equation}
where $0 \leq a_1+a_2 \leq 2(q_i+q_j-r)-s$,
$0 \leq \widetilde{u}_1,\widetilde{u_2}\leq s$ and
$\widetilde{u}_1+\widetilde{u}_2=s$, and $G_{2,2}'$, given by the same
graph with the $a_i$ replaced by $b_i$. 

From the graphs for $G'_{2,1}$ and $G_{2,2}'$, by grouping the four
$f_i$- and $f_j$-vertices together and then applying Cauchy-Schwarz
another time, we now obtain graphs that represent majorizing
integrals. For example, starting from
the graph~\eqref{eq:92} for $G_{2,1}'$, we obtain  
\begin{equation*}
          \begin{tikzpicture}[auto,description/.style={fill=white,inner
            sep=2pt},
          baseline=(current bounding box.center)]
          \matrix (m) [matrix of math nodes, row sep=3em,
          column sep=4em, text height=1.5ex, text depth=0.25ex
          , ampersand replacement = \&]
          { f_i \& f_i \& f_i \& f_i \\
            f_i \& f_i \& f_i \& f_i \\ };
          \path[-,font=\scriptsize]
          (m-1-1) edge node[auto] {$r$} (m-1-2)
          (m-2-2) edge node[auto] {$r$} (m-2-1)
          (m-1-3) edge node[auto] {$r$} (m-1-4)
          (m-2-4) edge node[auto] {$r$} (m-2-3)
          (m-1-1) edge [bend left] node[above] {$\widetilde{u}_1$} (m-1-3)
          (m-1-2) edge [bend left] node[above] {$\widetilde{u}_1$} (m-1-4)
          (m-2-1) edge [bend right] node[below] {$\widetilde{u}_1$} (m-2-3)
          (m-2-2) edge [bend right] node[below] {$\widetilde{u}_1$} (m-2-4)
          (m-1-1) edge node[auto] {$a_1$} (m-2-1)
          (m-1-2) edge node[auto] {$a_1$} (m-2-2)
          (m-1-3) edge node[auto] {$a_1$} (m-2-3)
          (m-1-4) edge node[auto] {$a_1$} (m-2-4)
          ;
        \end{tikzpicture}
      \end{equation*}
with $0 \leq \widetilde{u}_1 \leq s$ and
$a_1=q_i-r-\widetilde{u}_1$ and the same graph with $f_i$,
$\widetilde{u}_1$ and $a_1$ replaced by $f_j$, $\widetilde{u}_2$ and
$a_2$, respectively.

Starting from $G''_1$, the integrals  $G''_{2,1}$ and
$G''_{2,1}$ as well as the corresponding majorizing integrals are
obtained analogously. Finally, a careful inspection of the single
steps indicated above yields that the majorizing integrals obtained
after the final application of the Cauchy-Schwarz inequality are indeed
of the form stated in~\eqref{eq:11}.
\end{proof}

We are now ready to prove the main theorem of this section.

\begin{theorem}
  \label{thm:6}
  Let $g \colon \R^d \to \R$ be three times differentiable with
  bounded derivatives up to order three and assume that the following
  conditions are true.
  \begin{enumerate}[(i)]
  \item
      \begin{equation*}
    \sum_{i=1}^d \sum_{r=1}^{q_i-1} \norm{f_{i,n} \otimes_r
      f_{i,n}}_{\mathfrak{H}^{\otimes 2(q_i-r)}}
    \to 0.
  \end{equation*}
\item
  For those $i,j \in \{ 1,2,\ldots,d\}$for which
  $\sqrt{\on{Var}\Gamma_{ij}(F_n)}  \asymp \Delta_{\Gamma}(F_n)$
  it holds that
    \begin{equation}
    \label{eq:24}
    \sum_{r=1}^{q_i-1} \norm{f_{i,n} \widetilde{\otimes}_r
      f_{i,n}}_{\mathfrak{H}^{\otimes 2(q_i-r)}}
    \asymp
    \sum_{r=1}^{q_i-1} \norm{f_{i,n} \otimes_r
      f_{i,n}}_{\mathfrak{H}^{\otimes 2(q_i-r)}}.
  \end{equation}
  and
  \begin{equation}
    \label{eq:85}
    \frac{
      \sum_{r=1}^{q_i-1}
      \sum_{s=1}^{q_i-r-1}
      M_r (f_{i,n},s)
    }{
      \sum_{r=1}^{q_i-1}
      \norm{f_{i,n} \otimes_r f_{i,n}}_{\mathfrak{H}^{\otimes 2(q_i-r)}}^4
    }
    \to 0.
  \end{equation}
    \end{enumerate}
  Then it holds that
  \begin{equation}
    \label{eq:88}
    \frac{
      \E \left[ g(F_n) \right] - \mathcal{E}_3 (F_n,Z_n,g)
    }{
      \Delta_{\Gamma}(F_n)
    }
    \to
    0.
  \end{equation}
  If, in addition, there exists a covariance matrix $C$ such that
  $\Delta_C(F_n) \preccurlyeq \Delta_{\Gamma}(F_n) $, then
  \begin{equation}
    \label{eq:89}
    \frac{
      \E \left[ g(F_n) \right] - \mathcal{E}_3(F_n,Z,g)
    }{
      \varphi_C(F_n)
    }
    \to 0.
  \end{equation}
\end{theorem}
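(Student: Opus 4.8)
The plan is to deduce both conclusions from Proposition~\ref{prop:6} by verifying its hypotheses, so that the entire task collapses to a single contraction estimate. Indeed, Proposition~\ref{prop:6}a) yields~\eqref{eq:88} and part~b) yields~\eqref{eq:89} as soon as the sequences $(F_n,\widetilde{\Gamma}_{ij}(F_n))_{n\geq1}$ are ACN for the pairs $(i,j)$ that influence the rate, and Proposition~\ref{prop:7} guarantees this ACN property once the two contraction conditions~\eqref{eq:32} and~\eqref{eq:26} hold. Since~\eqref{eq:32} is \emph{verbatim} hypothesis~(i), everything reduces to establishing~\eqref{eq:26} for each pair with $\sqrt{\on{Var}\Gamma_{ij}(F_n)}\asymp\Delta_{\Gamma}(F_n)$.

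To attack~\eqref{eq:26} I would first fix the denominator: by~\eqref{eq:81} together with the dominating assumption it satisfies $\sum_r\norm{f_{i,n}\widetilde{\otimes}_r f_{j,n}}^2\asymp\on{Var}\Gamma_{ij}(F_n)\asymp\Delta_{\Gamma}(F_n)^2$, and hypothesis~\eqref{eq:24} is what prevents the symmetrization from lowering this order. For the numerator, I would expand each norm $\norm{(f_{i,n}\widetilde{\otimes}_r f_{j,n})\otimes_s(f_{i,n}\widetilde{\otimes}_r f_{j,n})}^2$ through the graphical representation~\eqref{eq:14} into a finite positive combination of integrals $G_{r,s}(f_{i,n},f_{j,n},\lambda)$, and bound each such integral by the Majorizing Lemma~\ref{lem:7} with a product of single-kernel majorizing integrals $M_r(f_{i,n},\cdot)$ and $M_r(f_{j,n},\cdot)$ whose second parameters sum to $s$. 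The decisive dichotomy is whether a factor $M_r(f,m)$ is \emph{interior} ($1\leq m\leq q-r-1$) or \emph{extreme} ($m\in\{0,q-r\}$, in which case it equals $\norm{f\otimes_r f}^4$). Any $G_{r,s}$ carrying an interior factor is controlled by~\eqref{eq:85}: it is $o\big(\sum_{r}\norm{f_{i,n}\otimes_r f_{i,n}}^4\big)=o(\Delta_{\Gamma}(F_n)^4)$, so after taking the square root its contribution to the numerator is $o(\Delta_{\Gamma}(F_n)^2)$, negligible against the denominator.

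The main obstacle is precisely the \emph{purely extreme} configurations. The crude Majorizing-Lemma bound places these — above all the balanced contraction $s=q-r$ in the diagonal case $i=j$ — at order $\norm{f\otimes_r f}^2$, i.e. at the \emph{same} apparent order $\Delta_{\Gamma}(F_n)^2$ as the denominator, so by itself it is insufficient. The crux is to show, through a careful cut-mirror-merge accounting of the graphs, that the pure term $\norm{f\otimes_r f}^4$ does not actually occur among the $G_{r,s}$ with $s\geq1$ (it is confined to the norm $\norm{f\widetilde{\otimes}_r f}^2$ sitting in the denominator itself), so that every surviving $G_{r,s}$ necessarily carries at least one interior majorizing factor. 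Combined with~\eqref{eq:24} keeping the denominator at the correct order and with hypothesis~(i) forcing the residual diagonal contractions to zero, this yields $\text{(numerator)}=o(\Delta_{\Gamma}(F_n)^2)$ and hence~\eqref{eq:26}. This graphical bookkeeping is the technical heart of the proof.

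For the second conclusion~\eqref{eq:89}, note that $\Delta_C(F_n)\preccurlyeq\Delta_{\Gamma}(F_n)$ and $\Delta_{\Gamma}(F_n)\to0$ give $\Delta_C(F_n)\to0$ and $\varphi_C(F_n)\asymp\Delta_{\Gamma}(F_n)$, so~\eqref{eq:89} differs from~\eqref{eq:88} only through replacing $Z_n$ by $Z$ in the expansion, and it suffices to prove $\mathcal{E}_3(F_n,Z_n,g)-\mathcal{E}_3(F_n,Z,g)=o(\Delta_{\Gamma}(F_n))$. Comparing the two Gaussians by interpolation, $\E\left[g(Z_n)\right]-\E\left[g(Z)\right]=\tfrac12\sum_{i,j}\left(\mu_{ij}(F_n)-C_{ij}\right)\E\left[\partial_{ij}g(Z)\right]+O\big(\Delta_C(F_n)^2\big)$; the leading term here is exactly the second-order correction carried by $\mathcal{E}_3(F_n,Z,g)$ (whereas $\mathcal{E}_3(F_n,Z_n,g)$ carries none, since $Z_n$ shares the covariance of $F_n$), so the two second-order pieces cancel. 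The remainder $O(\Delta_C(F_n)^2)$ is $o(\Delta_{\Gamma}(F_n))$ because $\Delta_C(F_n)^2\leq c\,\Delta_{\Gamma}(F_n)\Delta_C(F_n)$ with $\Delta_C(F_n)\to0$, and the difference of the third-order terms is $O(\Delta_{\Gamma}(F_n)\Delta_C(F_n))=o(\Delta_{\Gamma}(F_n))$, since the third cumulants $\mu_{ijk}(F_n)$ are $O(\Delta_{\Gamma}(F_n))$ by~\eqref{eq:80} while $\E\left[\partial_{ijk}g(Z_n)\right]-\E\left[\partial_{ijk}g(Z)\right]=O(\Delta_C(F_n))$. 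Combining this with~\eqref{eq:88} delivers~\eqref{eq:89}.
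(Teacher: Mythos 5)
Your overall architecture is the paper's: reduce via Propositions~\ref{prop:6} and~\ref{prop:7} to verifying \eqref{eq:26} for the dominant pairs, expand the numerator through the representation \eqref{eq:14}, and attack each graph integral $G_{r,s}$ with the Majorizing Lemma~\ref{lem:7}; your handling of terms that carry an interior factor is also sound (and your normalization by $\Delta_{\Gamma}(F_n)^2$ even spares you the paper's intermediate estimate \eqref{eq:90}). The genuine gap sits exactly at the step you yourself call the crux, which you defer rather than prove: the claim that every $G_{r,s}$ with $s\geq 1$ necessarily carries at least one \emph{interior} majorizing factor, so that the interior-only hypothesis \eqref{eq:85} finishes the job. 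That claim is false, and the paper's own material refutes it. Take $i=j$ with $q_i=2$. The numerator of \eqref{eq:26} then consists of the single term $\norm{(f\otimes_1 f)\otimes_1(f\otimes_1 f)}$, whose square is one connected graph integral, the eight-cycle $\on{tr}(H_f^8)$ (equivalently $\kappa_{8e_i}$ up to a constant, see \eqref{eq:48}). For $q=2$ there are no interior majorizing integrals at all: the range $1\leq s\leq q-r-1$ is empty and $M_1(f,0)=M_1(f,1)=\norm{f\otimes_1 f}^4$ are both extreme. Moreover this eight-cycle is genuinely of extreme order: the step-function kernels of Section~\ref{s-1} satisfy hypothesis (i), yet by \eqref{eq:2} they give $\on{tr}(H_f^8)\geq\tfrac12\norm{f\otimes_1 f}^4$ uniformly. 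So purely extreme configurations \emph{do} occur among the $G_{r,s}$ with $s\geq1$; they are connected graphs, not the disconnected ``pure term'', which is why your inference ``not the pure term $\Rightarrow$ has an interior factor'' is a non sequitur. No cut-mirror-merge bookkeeping can show these terms are absent or negligible, because they are present and, in the second chaos, they are exactly what the hypothesis must control (compare \eqref{eq:107} and the remark following the theorem).

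The paper's proof, by contrast, never attempts to show that extreme configurations are absent. After proving \eqref{eq:90} (which calibrates $\on{Var}\Gamma_{ij}(F_n)$ against $\on{Var}\Gamma_{ii}(F_n)$ and $\on{Var}\Gamma_{jj}(F_n)$), it bounds every product $M_r(f_{i,n},m_k)\,M_r(f_{j,n},n_k)$ produced by Lemma~\ref{lem:7} by the hybrid quantities $A_{1,n}+A_{2,n}+A_{3,n}$, which pair extreme norms $\norm{f\otimes_r f}^4$ of one kernel with majorizing sums of the other, and only then divides by $\on{Var}\Gamma_{ii}(F_n)\on{Var}\Gamma_{jj}(F_n)$, invoking \eqref{eq:85} together with \eqref{eq:24}. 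Your proposal replaces this mechanism with an assertion that is both unproven and untrue, so the main conclusion \eqref{eq:88} is not established. For completeness: your final paragraph on \eqref{eq:89} is a legitimate alternative to the paper's one-line argument (the paper merely notes that $\Delta_C(F_n)\preccurlyeq\Delta_{\Gamma}(F_n)$ makes the dominance condition of Proposition~\ref{prop:6}b) equivalent to that of part a)), whereas you compare $\mathcal{E}_3(F_n,Z_n,g)$ and $\mathcal{E}_3(F_n,Z,g)$ directly by Gaussian interpolation; that route works, although with only third-order regularity the error in your covariance comparison comes out as $O\bigl(\Delta_C(F_n)^{3/2}\bigr)$ rather than $O\bigl(\Delta_C(F_n)^2\bigr)$, which is still $o(\Delta_{\Gamma}(F_n))$. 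But this does not repair the gap in the proof of \eqref{eq:88}, on which \eqref{eq:89} also rests.
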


\begin{proof}
  Let $i,j \in \{ 1,2,\ldots, d \}$ such that 
  $\sqrt{\on{Var}\Gamma_{ij}(F_n)}  \asymp \Delta_{\Gamma}(F_n)$ and
  assume, without loss of generality, that $q_i \leq q_j$.
    
  We will show that 
  \begin{equation}
    \label{eq:90}
    \on{Var}\Gamma_{ij}(F_n) \succcurlyeq  
    \sqrt{\on{Var}\Gamma_{ii}(F_n)\on{Var}\Gamma_{jj}(F_n)}
  \end{equation}
  and 
  \begin{multline}
    \label{eq:91}
    \sum_{r=1}^{q_i \land q_j -\delta_{q_iq_j}}
    \sum_{s=1}^{q_i+q_j-2r-1}
    \norm{(f_{i,n} \widetilde{\otimes}_r f_{j,n}) \otimes_s (f_{i,n}
    \widetilde{\otimes}_r f_{j,n})}_{\mathfrak{H}^{\otimes
      2(q_i+q_j-2r-s)}}^2
  \\ \preccurlyeq 
    \sqrt{A_{1,n}} + \sqrt{A_{2,n}} + \sqrt{A_{3,n}} \,,
\end{multline}
where
\begin{align*}
  A_{1,n} &=
  \left(
  \sum_{r=1}^{q_i-1} \sum_{s=1}^{2(q_i-r)-1}
  M_r(f_{i,n},s)
\right)
  \left(
  \sum_{r=1}^{q_j-1} \sum_{s=1}^{2(q_j-r)-1}
  M_r(f_{j,n},s)
  \right)
  \\ A_{2,n} &=
  \left(
    \sum_{r=1}^{q_i-1}
    \norm{f_{i,n} \otimes_r f_{i,n}}_{\mathfrak{H}^{\otimes 2(q_i-r)}}^4
  \right)
  \left(
  \sum_{r=1}^{q_j-1} \sum_{s=1}^{2(q_j-r)-1}
  M_r(f_{j,n},s)
\right)
\intertext{and}
  A_{3,n} &=
  \left(
    \sum_{r=1}^{q_j-1}
    \norm{f_{j,n} \otimes_r f_{j,n}}_{\mathfrak{H}^{\otimes 2(q_j-r)}}^4
  \right)
  \left(
  \sum_{r=1}^{q_i-1} \sum_{s=1}^{2(q_i-r)-1}
  M_r(f_{i,n},s)
\right)
\end{align*}

   As a consequence, we get
   \begin{multline*}
      \frac{
      \sum_{r=1}^{q_i \land q_j - \delta_{q_iq_j}}
      \sum_{s=1}^{q_i+q_j-2r-1}
      \norm{(f_{i,n} \widetilde{\otimes}_r f_{j,n}) \otimes_s
        (f_{i,n} \widetilde{\otimes}_r
        f_{j,n})}_{\mathfrak{H}^{\otimes 2 (q_i+q_j-2r-1 -
          s)}}
    }{
      \sum_{r=1}^{q_i \land q_j - \delta_{q_iq_j}}
  \norm{f_{i,n}
  \widetilde{\otimes}_r f_{j,n}}^2_{\mathfrak{H}^{\otimes (q_i+q_j-2r)}}
}
 \\ \preccurlyeq
  \left(
  \frac{
    \sqrt{A_{1,n}} + \sqrt{A_{2,n}} + \sqrt{A_{3,n}}
  }{
    \on{Var} \Gamma_{ii}(F_n) \on{Var} \Gamma_{jj}(F_n)
    }
  \right)^{1/2}
  \to 0,
\end{multline*}
where the convergence to zero is implied by assumptions~\eqref{eq:85}
and~\eqref{eq:24}. In view of Proposition~\ref{prop:7}, this proves~\eqref{eq:88}. By
the same argument we obtain~\eqref{eq:89}, as $\Delta_C(F_n)
\preccurlyeq \Delta_{\Gamma}(F_n)$ implies that
\begin{equation*}
  \sqrt{\on{Var}\Gamma_{ij}(F_n)} + \abs{\E \left[ F_{i,n} F_{j,n}
  \right] - C_{ij}} \asymp \varphi_C(F_n)
\end{equation*}
 is equivalent to
$\sqrt{\on{Var}\Gamma_{ij}(F_n)}  \asymp
\Delta_{\Gamma}(F_n)$. 

To prove~\eqref{eq:90}, note that the Cauchy-Schwarz inequality
implies for $1 \leq r < q_i$ that
\begin{align*}
  \norm{f_{i,n} \widetilde{\otimes}_r f_{j,n}}_{\mathfrak{H}^{\otimes
      (q_i+q_j-2r)}}^2
  &\leq
  \norm{f_{i,n} \otimes_r f_{j,n}}_{\mathfrak{H}^{\otimes
      (q_i+q_j-2r)}}^2
  \\ &\leq
  \norm{f_{i,n} \otimes_r f_{i,n}}_{\mathfrak{H}^{\otimes
      2(q_i-r)}}
  \,
  \norm{f_{j,n} \otimes_r f_{j,n}}_{\mathfrak{H}^{\otimes
      2(q_j-r)}}.
\end{align*}
Therefore,
\begin{align*}
  \sum_{r=1}^{q_i \land q_j - 1}
  &\norm{f_{i,n} \widetilde{\otimes}_r f_{j,n}}_{\mathfrak{H}^{\otimes
      (q_i+q_j-2r)}}^2
  \\ &\qquad \leq
  \sum_{r=1}^{q_i \land q_j - 1}
  \norm{f_{i,n} \otimes_r f_{i,n}}_{\mathfrak{H}^{\otimes
      2(q_i-r)}}
  \,
  \norm{f_{j,n} \otimes_r f_{j,n}}_{\mathfrak{H}^{\otimes
      2(q_j-r)}} 
  \\ &\qquad\leq
  \left( \sum_{r=1}^{q_i-1} \norm{f_{i,n} \otimes_r
      f_{i,n}}_{\mathfrak{H}^{\otimes 2(q_i-r)}} \right)
  \left( \sum_{r=1}^{q_j-1} \norm{f_{j,n} \otimes_r
      f_{j,n}}_{\mathfrak{H}^{\otimes 2(q_j-r)}} \right)
\end{align*}
Together with assumption~\eqref{eq:24}, this gives
\begin{align*}
  \on{Var}_{ij}(F_n) 
  &=
  \sum_{r=1}^{q_i \land q_j - \delta_{q_iq_j}}
  \norm{f_{i,n} \widetilde{\otimes}_r
    f_{j,n}}^2_{\mathfrak{H}^{\otimes (q_i+q_j-2r)}} 
\\ &\preccurlyeq
  (1-\delta_{q_iq_j}) \norm{f_{i,n} \widetilde{\otimes}_{q_i}
    f_{j,n}}^2_{\mathfrak{H}^{\otimes (q_j-q_i)}}
  +
  \sqrt{
    \on{Var}_{ii}(F_n)
    \on{Var}_{jj}(F_n)
  }
  \\ &\preccurlyeq
    (1-\delta_{q_iq_j}) \norm{f_{i,n} \widetilde{\otimes}_{q_i}
    f_{j,n}}^2_{\mathfrak{H}^{\otimes (q_j-q_i)}}
  +
  \on{Var}_{ii}(F_n)
  +
  \on{Var}_{jj}(F_n)  
  \\ &\preccurlyeq
  \Delta_{\Gamma}(F_n)^2,
\end{align*}
where we set $\mathfrak{H}^{\otimes 0} = \R$.
As $\sqrt{\on{Var}_{ij}(F_n)} \asymp \Delta_{\Gamma}(F_n)$, this shows
\begin{equation*}
  \on{Var}_{ij}(F_n) 
  \asymp
  (1-\delta_{q_iq_j}) \norm{f_{i,n} \widetilde{\otimes}_{q_i}
    f_{j,n}}^2_{\mathfrak{H}^{\otimes (q_j-q_i)}}
  +
  \sqrt{
    \on{Var}_{ii}(F_n)
    \on{Var}_{jj}(F_n)
  }
\end{equation*}
and therefore~\eqref{eq:90}.

Now let $1 \leq r \leq q_i \land q_j - \delta_{q_iq_j}$, $1 \leq s \leq
q_i+q_j-2r-1$ and $G_{r,s}(f_{i,n},f_{j,n},\lambda)$ be an integral from the 
right hand side of representation~\eqref{eq:14}. By the Majorizing
Lemma~\ref{lem:7}, we can find integers $n_k \in \{ 0,1,\ldots,q_i-r \}$ and $m_k \in \{
0,1,\ldots,q_j-r\}$ such that $m_k + n_k=s$ for $1
\leq k \leq 4$ and
\begin{equation*}
  G_{r,s}(f_{i,n},f_{j,n},\lambda) \leq
  \prod_{k=1}^4
  \big(
  M_r(f_{i,n},m_k) M_r(f_{j,n},n_k)
  \big)^{1/8}.
\end{equation*}
As clearly
\begin{equation*}
  M_r(f_{i,n},m_k) \, M_r(f_{j,n},n_k)
  \leq
  A_{1,n} + A_{2,n} +A_{3,n},
\end{equation*}
this gives
\begin{equation*}
  G_{r,s}(f_{i,n},f_{j,n},\lambda) 
  \leq
  \prod_{k=1}^4
  \left( A_{1,n} + A_{2,n} + A_{3,n} \right)^{1/8}
  \leq
  \sqrt{A_{1,n}} + \sqrt{A_{2,n}} + \sqrt{A_{3,n}}.
\end{equation*}
The representation~\eqref{eq:14} thus yields
\begin{equation*}
      \norm{(f_{i,n} \widetilde{\otimes}_r f_{j,n}) \otimes_s
        (f_{i,n} \widetilde{\otimes}_r
        f_{j,n})}_{\mathfrak{H}^{\otimes 2 (q_i+q_j-2r -
          s)}}^2
      \preccurlyeq
        \sqrt{A_{1,n}} + \sqrt{A_{2,n}} + \sqrt{A_{3,n}} \, ,
\end{equation*}
wich immediately implies~\eqref{eq:91}.
\end{proof}

\begin{remark}
  \begin{enumerate}[a)]
  \item  Note that~\eqref{eq:24} is satisfied, if the kernels $f_{i,n}$ are
  either all non-negative or all non-positive for $n \geq
  n_0$.
\item In the case where $q_i=2$ for $1 \leq i \leq d$, we obtain
  Proposition~\ref{prop:1} as a special case of Theorem~\ref{thm:6}.
  Indeed,~\eqref{eq:24} is trivially satisfied, and, by~\eqref{eq:30}
  and~\eqref{eq:48}, conditions (i) and (ii) are equivalent
  to~\eqref{eq:102} and~\eqref{eq:107}, respectively.
  \end{enumerate}
\end{remark}

\section{Examples}
\label{s-24}

In this section, we provide several examples that illustrate our
techniques. 

\subsection{Step functions and matrix representations}
\label{s-1}

We start with a counterexample, that in a way shows that the kernels
involved can not be too ``simple'' in order for our techniques to work. 
Let $\mathfrak{H}=L^2([0,1),\mu)$, where $\mu$ is the Lebesgue measure
and partition $[0,1)$  into
$N$ equidistant intervals $\alpha_1,\alpha_2,\ldots,\alpha_N$ where
$
  \alpha_k =
  \left[
    \frac{k-1}{N}, \,
    \frac{k}{N}
  \right)
  $
for $k=1,\ldots,N$. Using this partition, we endow
$[0,1)^2$ with a grid and define a symmetric kernel $f \in  
\mathfrak{H}^{\odot 2}$ that is constant on each sector by
\begin{equation}
  \label{eq:70}
  f(x,y)
  =
  \sum_{i,j=1}^N
  a_{ij} \, 1_{\alpha_i}(x) 1_{\alpha_j}(y), 
\end{equation}
where the $a_{ij}$ are real constants and $a_{ij}=a_{ji}$.  Of course, $f$ is uniquely determined by the symmetric matrix
$A = (a_{ij})_{1 \leq i,j \leq N}$.
If $g$ is another kernel of the type~\eqref{eq:70}, given by a
matrix $B = (b_{ij})_{1 \leq i,j \leq N}$, we have
\begin{align*}
  (f \otimes_1 g) (x,y)
  &=
  \int_0^1
  \left(
  \sum_{i,j=1}^N
  a_{ij} \, 1_{\alpha_i}(t) 1_{\alpha_j}(x)
\right)
\left(
  \sum_{k,l=1}^N
  b_{kl} \, 1_{\alpha_k}(t) 1_{\alpha_l}(y)
  \right)
  \diff{\mu(t)}
  \\ &=
  \sum_{i,j,l=1}^N
  a_{ij} \, b_{jl}
  \,
  \mu(\alpha_j) \,
  1_{\alpha_i}(x) 1_{\alpha_l}(y)
  \\ &=
  \frac{1}{N}
  \sum_{i,j,l=1}^N
  a_{ij} \, b_{jl}
  1_{\alpha_i}(x) 1_{\alpha_l}(y)
\end{align*}
and
\begin{equation*}
  (f \widetilde{\otimes}_1 g)(x,y)
  =
  \frac{1}{2N}
  \sum_{i,j,l=1}^N
  (a_{ij} \, b_{jl} + a_{lj} \, b_{ji}) \,
  1_{\alpha_i}(x) 1_{\alpha_l}(y).
\end{equation*}
Therefore, $f \otimes_1 g$ can be identified with the matrix $C=\frac{1}{N} AB$ 
and $f \widetilde{\otimes}_1 g$ by $\frac{1}{2} \left( C + C^T
  \right)$, where $C^T$ denotes the transpose of $C$. Analogously, one
  can show that 
  \begin{equation*}
    \left\langle f,g \right\rangle_{\mathfrak{H}^{\otimes
        2}}
    =
    \frac{1}{N^2}
    \left\langle A,B \right\rangle_{H.S.}
    =
    \frac{
      \on{tr}(AB^T)
    }{N^2}.    
\end{equation*}

By formula~\eqref{eq:34}, it is now easy to see that for any $m \geq 2$ we get
\begin{equation}
  \label{eq:41}
  \kappa_m(I_2(f)) = 2^{m-1} (m-1)! \frac{\on{tr}A^m}{N^m}
\end{equation}
and therefore, by the Cauchy-Schwarz inequality,
\begin{equation}
  \label{eq:2}
  \frac{ 3!^2
  }{2 \times 7!} \,
  \frac{\kappa_8(I_2(f))}{\kappa_4(I_2(f))^2}
  =
 \frac{
   \on{tr} \left( A^8 \right)
}{
  \on{tr} \left( A^4 \right)^2
}
=
\left(
  1 +
  \frac{
    \sum_{1\leq k,l \leq N, k \neq l} \lambda_k^4 \lambda_l^4
  }{
    \sum_{1 \leq k \leq N} \lambda_k^8
  }
\right)^{-1}
\geq
\frac{1}{2},
\end{equation}
where $(\lambda_k)_{1 \leq k \leq N}$ denotes the eigenvalue sequence of $A$. 
Now fix $d \geq 1$ and choose a sequence $(N_n)$ of positive integers
greater than two. For $n \geq 1$, define random vectors
$F_n=(I_2(f_{1,n}),\ldots,I_2(f_{d,n})$, where the kernels $f_{i,n}$
are given by symmetric $(N_n \times N_n)$-matrices $A_{i,n}$, $1 \leq
i \leq d$, such that 
\begin{equation*}
  \frac{\on{tr} A_{i,n}^4}{N_n^4} \to 0 \qquad (n \to \infty)  
\end{equation*}
for $1 \leq i \leq d$. Then, by the Fourth Moment Theorem~\ref{thm:7}, $F_n$
converges in distribution to a $d$-dimensional, 
centered Gaussian random vector. However, by~\eqref{eq:2},
condition~\eqref{eq:107} is never satisfied and thus
Proposition~\ref{thm:6} fails to provide optimal rates of convergence. 

\begin{remark}
  \label{s-foll-expl-example}
  The following explicit example illustrates how symmetrization can drastically
  increase the speed of convergence (see~\cite{nourdin_asymptotic_2011},
  Remark 3.2(3) for another example): If $N=2$ and the 
kernels  $f$ and $g$ are represented by the matrices
\begin{equation*}
 A=
\begin{bmatrix}
  1 &\phantom{-}0 \\ 0 &-1
\end{bmatrix}
\qquad \text{and} \qquad
B=
\begin{bmatrix}
  0 &1 \\ 1&0
\end{bmatrix},
\end{equation*}
respectively, it holds that $AB + BA=0$ and $\on{tr}(ABBA)=2$. Therefore, $\norm{f \widetilde{\otimes}_1 g}_{\mathfrak{H}^{\otimes
    2}}^2 = 0$ while $\norm{f \otimes_1 g}_{\mathfrak{H}^{\otimes
    2}}^2 = \frac{1}{8}$.
\end{remark}
\subsection{Exploding integrals of Brownian sheets}
\label{s-11}

  Let $W= \{ W(t_1,\ldots,t_l) \colon 0 \leq t_1,\ldots,t_l \leq 1 \}$
  be a standard Brownian sheet on $[0,1]^l$, i.e. a centered Gaussian
  process such that
  \begin{equation*}
    \E \left[ W(s_1,\ldots,s_l) W(t_1,\ldots,t_l) \right]
    =
    \prod_{i=1}^l (s_i \land t_i)
  \end{equation*}
  for all $(s_1,\ldots,s_l),(t_1,\ldots,t_l) \in [0,1]^l$. We can
  identify the Gaussian space generated by $W$ with an isonormal
  process $X$ on $\mathfrak{H} = L^2([0,1]^l)$ via
  \begin{equation*}
       W(t_1,\ldots,t_l) \sim X \left( \prod_{i=1}^l \, 1_{[0,t_i]}
       \right).
  \end{equation*}

  For positive $\varepsilon$, we now define 
  \begin{equation}
    \label{eq:33}
    F_{\varepsilon} = \int_{[0,1]^l}
    \frac{
    W(t_1,\ldots,t_l)^2
  }{
  \left( t_1 \, t_2 \cdots t_l \right)^{2-\varepsilon}
}
\, 
\diff{\, (t_1,\ldots,t_l)}.
\end{equation}
  An application of
  Jeulin's lemma (see~\cite[Lemma 1, p. 44]{MR604176}) shows that $F_{\varepsilon}$
  ``explodes'' in the  limit, i.e. that $P$-almost surely  
  \begin{equation*}
    F_{\varepsilon} \to \infty \qquad (\varepsilon \to 0).
  \end{equation*}
  However, for the normalized sequence $\widetilde{F}_{\varepsilon}$,
  defined by 
  \begin{equation}
    \label{eq:4}
    \widetilde{F}_{\varepsilon} = \frac{F_{\varepsilon} - \E \left[ F_{\varepsilon} \right]}{\sqrt{\E \left[ F_{\varepsilon}^2 \right]}},
  \end{equation}
  the central limit theorem
  \begin{equation}
    \label{eq:36}
    \widetilde{F}_{\varepsilon} \xrightarrow{\mathcal{L}} Z \sim
    \mathcal{N}(0,1) \qquad (\varepsilon \to 0).
  \end{equation}
  holds. This is a consequence of the Fourth Moment
  Theorem~\ref{thm:7} and the forthcoming formula~\eqref{eq:95} that
  provides asymptotics for the cumulants of
  $\widetilde{F}_{\varepsilon}$. For slightly different exploding
  functionals of the above type, an analogous central limit theorem
  was established in~\cite{peccati_hardys_2004} for the case
  $l=1$,~\cite{deheuvels_quadratic_2006} for the case $l=2$
  and~\cite{MR2118863} for the case $l>2$. Exact asymptotics in the
  Kolmogorov distance were provided in~\cite{1196.60034}. Here, we are
  interested in vectors of such functionals. 
 
  Routine calculations show that
  \begin{equation*}
    \widetilde{F}_{\varepsilon} = \frac{F_{\varepsilon} -
      \mu_{\varepsilon}}{\sigma_{\varepsilon}} = 
    \frac{1}{\sigma_{\varepsilon}} I_2(f_{\varepsilon}),
  \end{equation*}
  where
  \begin{equation}
    \label{eq:68}
    f_{\varepsilon}(x_1,\ldots,x_l,y_1,\ldots,y_l)
    =
    \prod_{k=1}^l
    \int_0^1 \frac{1_{[0,t_k]}(x_k)
      1_{[0,t_k]}(y_k)}{t_k^{2-\varepsilon}} \diff{t_k},
  \end{equation}
  $\mu_{\varepsilon}=\E \left[ \widetilde{F}_{\varepsilon} \right]$,
  and $\sigma_{\varepsilon}^2 = \E \left[
    \widetilde{F}_{\varepsilon}^2 \right]$.

  From~\eqref{eq:68} we conclude that if
  $\varepsilon_1,\varepsilon_2,\ldots,\varepsilon_k$ are $k$ positive
  numbers, then
  \begin{equation}
    \label{eq:100}
    \left\langle
      ( \cdots ((f_{\varepsilon_1} \widetilde{\otimes}_1 f_{\varepsilon_2})
      \widetilde{\otimes}_1 f_{\varepsilon_3}) \widetilde{\otimes}_1
      \cdots )\widetilde{\otimes}_1 f_{\varepsilon_{k-1}}
      , f_{\varepsilon_k}
    \right\rangle_{\mathfrak{H}}
    =
    C(\varepsilon_1,\varepsilon_2,\ldots,\varepsilon_k)^l,
  \end{equation}
  where
  \begin{multline}
    \label{eq:57}
    C(\varepsilon_1,\varepsilon_2,\ldots,\varepsilon_k)
    \\= \int_{[0,1]^k}
    \frac{
      (s_1 \land s_2) (s_2 \land s_3) \cdots (s_{k-1} \land s_k) (s_k
      \land s_1) 
    }{s_1^{2-\varepsilon_1} s_2^{2-\varepsilon_2} \cdots s_k^{2-\varepsilon_k}} \diff{\,(s_1,\ldots,s_k)}.
  \end{multline}
  For convenience, we will
    write $C_k(\varepsilon) = C(\varepsilon,\ldots,\varepsilon)$, if
    all $k$ arguments are equal. With this notation, we have
    $\mu_{\varepsilon} = C(\varepsilon)^l$ and
    $\sigma^2_{\varepsilon} = C_2(\varepsilon)^l$.
  By partitioning the $k$-dimensional unit interval into
  simplexes, the integral~\eqref{eq:57} can be computed explicitly. These
  calculations yield that
  \begin{equation}
    \label{eq:101}
      C(\varepsilon_1,\ldots,\varepsilon_k)
      =
      \widetilde{c}(\varepsilon_1,\ldots,\varepsilon_k)
      \times
      \frac{
        k!
      }{
        \varepsilon_1+\ldots+\varepsilon_k
      }.
    \end{equation}
    Here, $\widetilde{c}(\varepsilon_1,\ldots,\varepsilon_k)$ is
    the canonical symmetrization of
    \begin{equation*}
      c(\varepsilon_1,\ldots,\varepsilon_k)
      =
      \big(
        (1+\varepsilon_1)
        (1+\varepsilon_1+\varepsilon_2)
        \cdots
        (1+\varepsilon_1+\varepsilon_2+\ldots + \varepsilon_{k-1})
        \big)^{-1}.
    \end{equation*}
    Observe that $0 < \widetilde{c}(\varepsilon_1,\ldots,\varepsilon_k) < 1$ and
    $\widetilde{c}(\varepsilon_1,\ldots,\varepsilon_k) \to 1$ if
    $\varepsilon_1,\ldots,\varepsilon_k \to 0$.

    Now fix $d \geq 1$ and, for
    $\varepsilon_1,\ldots,\varepsilon_d>0$, define
    $\widetilde{F}_{(\varepsilon_1,\ldots,\varepsilon_d)} =
    (\widetilde{F}_{\varepsilon_1},\ldots,\ldots,\widetilde{F}_{\varepsilon_d})$.
    As the inner     product on the left hand side of~\eqref{eq:100}
    does not depend on the order in which the $k$ kernels
    $f_{\varepsilon_1},f_{\varepsilon_2},\ldots,f_{\varepsilon_k}$ are
    contracted, the cumulant formula~\eqref{eq:37} yields that
  \begin{equation}
    \label{eq:94}
    \kappa_{\alpha}(\widetilde{F}_{(\varepsilon_1,\ldots,\varepsilon_d)})
    =
    2^{\abs{\alpha}-1} \, (\abs{\alpha}-1)! \, 
    C(\varepsilon_{l_1},\varepsilon_{l_2},\ldots,\varepsilon_{l_{\abs{\alpha}}})^l
    \prod_{k=1}^{\abs{\alpha}} C_2(\varepsilon_{l_k})^{-l/2},
  \end{equation}
  where $\alpha \in \N_0^d$ with elementary   
  decomposition $\{ l_1,\ldots,l_{\abs{\alpha}}
  \}$. Identity~\eqref{eq:101} shows that 
  \begin{equation}
    \label{eq:95}
    \kappa_{\alpha}(\widetilde{F}_n)^{-l}
    \asymp
      \frac{
        \left(\varepsilon_{l_1} \cdots \varepsilon_{l_{\abs{\alpha}}}
        \right)^{1/2}
      }{
        \varepsilon_{l_1} + \cdots + \varepsilon_{l_{\abs{\alpha}}}
      }.
  \end{equation}
  This allows us to apply Proposition~\ref{thm:6} to the
  vector $\widetilde{F}_{(\varepsilon_1,\ldots,\varepsilon_d)}$. We
  obtain the following explicit result.

  \begin{proposition}
    \label{prop:3}
    For $\varepsilon_1,\ldots,\varepsilon_d > 0$, let
    $\widetilde{F}_{\varepsilon_1,\ldots,\varepsilon_d} = 
    (\widetilde{F}_{\varepsilon_1},\ldots,\widetilde{F}_{\varepsilon_d})$ be
    defined by~\eqref{eq:4} and let $g
    \colon \R^2 \to \R$ be a three times 
    differentiable function with bounded derivatives up to order three.
    Then it holds that
    \begin{equation}
      \label{eq:96}
      \frac{
        \E \left[
          g(\widetilde{F}_{\varepsilon_1,\ldots,\varepsilon_d})
        \right] -
        \mathcal{E}_3(\widetilde{F}_{\varepsilon_1,\ldots,\varepsilon_d},Z_{\varepsilon_1,\ldots,\varepsilon_d}
        ,g)   
      }{
        \sqrt{
          \sum_{i=1}^d \varepsilon_i^{\, l}
          }
      }
      \to 0, \qquad \left( \varepsilon_1,\ldots,\varepsilon_d \to 0 \right),
    \end{equation}
    where  $Z_{\varepsilon_1,\ldots,\varepsilon_d}$ is a centered Gaussian random
    variable with the same covariance matrix as
    $\widetilde{F}_{(\varepsilon_1,\ldots,\varepsilon_d)}$.
    If, in addition, for $1 \leq i,j \leq d$ it holds that
    \begin{equation}
      \label{eq:40}
        \frac{
          2}{
        \sqrt{\frac{\varepsilon_i}{\varepsilon_j}}
        +
        \sqrt{\frac{\varepsilon_j}{\varepsilon_i}}
       }
      \to C_{ij}^{1/l}, \qquad \left(
        \varepsilon_1,\ldots,\varepsilon_d \to 0 \right), 
    \end{equation}
    and
    \begin{equation}
      \label{eq:97}
      \frac{
        \sum_{i,j=1}^d
        \left( 
          \left(
            \sqrt{\frac{\varepsilon_i}{\varepsilon_j}}
          +
          \sqrt{\frac{\varepsilon_j}{\varepsilon_i}}
        \right)^{-l}
        - \frac{C_{ij}}{2^l}
         \right)^2
       }{
         \sum_{i=1}^d \varepsilon_i^{\, l}
       }
       \to 0,
       \qquad \left(
        \varepsilon_1,\ldots,\varepsilon_d \to 0 \right), 
    \end{equation}
    where $C_{ij} \geq 0$, then
    \begin{equation*}
      \frac{\E \left[
          g(\widetilde{F}_{(\varepsilon_1,\ldots,\varepsilon_d)})
        \right] - \mathcal{E}_3(\widetilde{F}_{(\varepsilon_1,\ldots,\varepsilon_d)},Z,g)
      }{
        \sqrt{
          \sum_{i=1}^d \varepsilon_i^{\, l}
        }
      }
      \to 0 \qquad (\varepsilon_1,\ldots,\varepsilon_d \to 0)
    \end{equation*}
    and
    \begin{multline*}
      \label{eq:98}
      \lim_{\varepsilon_1,\ldots,\varepsilon_d \to 0}
      \frac{\E \left[
          g(\widetilde{F}_{(\varepsilon_1,\ldots,\varepsilon_d)})
        \right] - \E \left[ g(Z) \right] 
        }{
        \varphi_C(\widetilde{F}_{(\varepsilon_1,\ldots,\varepsilon_d)})
      }
      \\ =
      4 \sqrt{6}
      \lim_{\varepsilon_1,\ldots,\varepsilon_d \to 0}
      \frac{
        \sum_{i,j,k=1}^d
        \left(
          \sqrt{\frac{\varepsilon_i}{\varepsilon_j \varepsilon_k}}
          +
          \sqrt{\frac{\varepsilon_j}{\varepsilon_i \varepsilon_k}}
          +
          \sqrt{\frac{\varepsilon_k}{\varepsilon_i \varepsilon_j}}
        \right)^{-l}
        \E \left[ \partial_{ijk}g(Z) \right]
      }{
        \sqrt{
        \sum_{i,j=1}^d
        \left( \frac{1}{\varepsilon_i} + \frac{1}{\varepsilon_j}
        \right)^{-l}
        }
      }.
    \end{multline*}
    Here,
        $\varphi_C(\widetilde{F}_{(\varepsilon_1,\ldots,\varepsilon_d)})^2
    \asymp \sum_{i=1}^d\varepsilon_i^{\, l}$
    and
    $Z$ is a $d$-dimensional, centered Gaussian random variable with
    covariance $C=(C_{ij})_{i,j=1}^d$.
  \end{proposition}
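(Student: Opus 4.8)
The plan is to reduce every assertion to the second-chaos machinery of Proposition~\ref{prop:1} (itself the special case $q_i=2$ of Theorem~\ref{thm:6}), feeding in the explicit cumulant asymptotics furnished by~\eqref{eq:94} and~\eqref{eq:95}, which in turn rest on the product-form identity~\eqref{eq:100} and the closed expression~\eqref{eq:101}. Throughout, every kernel carries the product structure over the $l$ coordinates, so each contraction norm factorizes as an $l$-th power of a one-dimensional $C$-integral, and the behaviour as $\varepsilon_1,\ldots,\varepsilon_d\to0$ is governed by the simple poles $C(\varepsilon_{l_1},\ldots,\varepsilon_{l_k})\asymp k!/(\varepsilon_{l_1}+\cdots+\varepsilon_{l_k})$ coming from~\eqref{eq:101}.

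First I would prove~\eqref{eq:96}. Specializing~\eqref{eq:95} gives $\kappa_{4e_i}(\widetilde F_{(\varepsilon_1,\ldots,\varepsilon_d)})\asymp\varepsilon_i^{\,l}$ and $\kappa_{8e_i}(\widetilde F_{(\varepsilon_1,\ldots,\varepsilon_d)})\asymp\varepsilon_i^{\,3l}$. Hence~\eqref{eq:102} holds because $\sum_i\kappa_{4e_i}\asymp\sum_i\varepsilon_i^{\,l}\to0$, and, crucially, $\kappa_{8e_i}/\kappa_{4e_i}^2\asymp\varepsilon_i^{\,l}\to0$ for every $i$, so~\eqref{eq:107} is satisfied unconditionally. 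Proposition~\ref{prop:1}a) then yields~\eqref{eq:96} once one records that $\Delta_{\Gamma}(\widetilde F_{(\varepsilon_1,\ldots,\varepsilon_d)})^2\asymp\sum_i\kappa_{4e_i}\asymp\sum_i\varepsilon_i^{\,l}$, the off-diagonal variances $\on{Var}\Gamma_{ij}$ being dominated by the diagonal ones through Cauchy--Schwarz.

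For the second group of assertions I would read off from~\eqref{eq:94} and~\eqref{eq:101} that $\E[\widetilde F_{\varepsilon_i}\widetilde F_{\varepsilon_j}]\to\big(2/(\sqrt{\varepsilon_i/\varepsilon_j}+\sqrt{\varepsilon_j/\varepsilon_i})\big)^{l}$, so that hypothesis~\eqref{eq:40} is exactly the statement that the covariances converge to $C$, whence $\Delta_C(\widetilde F)\to0$; hypothesis~\eqref{eq:97} says precisely that $\Delta_C(\widetilde F)^2=o(\sum_i\varepsilon_i^{\,l})$, i.e. $\Delta_C=o(\Delta_{\Gamma})$ and therefore $\varphi_C\asymp\Delta_{\Gamma}\asymp\sqrt{\sum_i\varepsilon_i^{\,l}}$. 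Proposition~\ref{prop:1}b) then gives the Edgeworth statement with $\varphi_C$ in the denominator, which may be replaced by $\sqrt{\sum_i\varepsilon_i^{\,l}}$. For the explicit limit I would invoke Corollary~\ref{cor:1}b): the second-order summand in~\eqref{eq:66} is $O(\Delta_C/\varphi_C)=o(1)$ and drops out, while symmetrizing the third-order summand and using the identity $\sqrt{\on{Var}\Gamma_{ij}}\,(\rho_{ijk}+\rho_{jik})=\kappa_{e_i+e_j+e_k}(\widetilde F)$ established in the proof of Theorem~\ref{thm:1} collapses it to $\tfrac16\sum_{i,j,k}\kappa_{e_i+e_j+e_k}(\widetilde F)\,\E[\partial_{ijk}g(Z)]/\varphi_C$; substituting the explicit forms of $\kappa_{e_i+e_j+e_k}$ and of $\on{Var}\Gamma_{ij}$ produces the displayed ratio.

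The main obstacle is the constant-level asymptotics of the denominator: I must show $\on{Var}\Gamma_{ij}(\widetilde F)\asymp(1/\varepsilon_i+1/\varepsilon_j)^{-l}$ and determine its exact leading constant, because $\Delta_{\Gamma}$ sits under a square root and hence fixes the numerical prefactor $4\sqrt6$. This requires evaluating the four-kernel norm $\|f_{\varepsilon_i}\widetilde\otimes_1 f_{\varepsilon_j}\|^2$ via~\eqref{eq:100}, carefully accounting for the symmetrization (the transpose cross-term $\langle f_{\varepsilon_i}\otimes_1 f_{\varepsilon_j},(f_{\varepsilon_i}\otimes_1 f_{\varepsilon_j})^{T}\rangle$) and for the normalizing powers of $\sigma_{\varepsilon_i}$; matching the resulting $l$-dependent constants in numerator and denominator so that they cancel and leave $4\sqrt6$ is the only genuinely delicate bookkeeping. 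A secondary point to check is that the index pairs $(i,j)$ triggering the ACN hypotheses are exactly those with $\sqrt{\on{Var}\Gamma_{ij}}\asymp\Delta_{\Gamma}$, which is automatic here since all $\on{Var}\Gamma_{ij}$ are comparable to $\sum_i\varepsilon_i^{\,l}$ in the regime singled out by~\eqref{eq:40}.
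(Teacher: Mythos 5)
Your proposal is correct and takes essentially the same approach as the paper: the paper's own proof consists precisely of the observation that the cumulant asymptotics~\eqref{eq:94}--\eqref{eq:95} (which rest on~\eqref{eq:100} and~\eqref{eq:101}) allow one to apply Theorem~\ref{thm:6} --- equivalently its second-chaos specialization, Proposition~\ref{prop:1} --- together with Corollary~\ref{cor:1} for the explicit limit, which is exactly your reduction. Your verifications ($\kappa_{4e_i}\asymp\varepsilon_i^{\,l}$, $\kappa_{8e_i}\asymp\varepsilon_i^{\,3l}$, hence~\eqref{eq:107} holds unconditionally; \eqref{eq:40} and~\eqref{eq:97} encode $\Delta_C = o(\Delta_{\Gamma})$) are accurate, and the constant-level bookkeeping you flag as remaining is likewise left implicit in the paper.
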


  Note that by~\eqref{eq:94} and~\eqref{eq:101}, the Edgeworth
  expansions 
  $\mathcal{E}_3(\widetilde{F}_{\varepsilon_1,\ldots,\varepsilon_d}
  ,Z_{\varepsilon_1,\ldots,\varepsilon_d} ,g)$ and
  $\mathcal{E}_3(\widetilde{F}_{\varepsilon_1,\ldots,\varepsilon_d} 
  ,Z,g)$ can be calculated explicitly.

  To illustrate this, we choose a positive sequence
  $(a_n)_{n \geq 1}$ converging to zero and two positive numbers $\xi$ and
  $\zeta$. Then it holds that $\widetilde{F}_{(\xi \cdot a_n, \zeta \cdot a_n)}$
  has covariance
  \begin{equation*}
    C =
    \begin{pmatrix}
      1 & \rho^l \\
      \rho^l &1 
    \end{pmatrix}
  \end{equation*}
  for all $n \geq 1$, where
  \begin{equation*}
    \rho = \frac{2}{\sqrt{\frac{\xi}{\zeta}} + \sqrt{\frac{\zeta}{\xi}}}.
  \end{equation*}
  Thus, the conditions~\eqref{eq:40} and~\eqref{eq:97} are trivially satisfied
  and all conclusions of Proposition~\ref{prop:3} hold.   

\subsection{Continuous time Toeplitz quadratic functionals}
\label{s-23}

Let $(X_t)_{t \geq 0}$ be a centered, real valued Gaussian process
with a covariance function $r$ of the form
$ r(t) = \E \left[ X_u X_{u+t} \right] = \widehat{f}(t)$,
where $f \colon \R \to \R$ is an integrable, even function,
customarily called the spectral density of the process $(X_t)$ and
$\widehat{f}$ denotes its
Fourier transform $t \mapsto \int_{-\infty}^{\infty} \me^{\mi xt} f(x)
\diff{x}$.
If $h \colon \R \to \R$ is another integrable even function with Fourier
transform $\widehat{h}$ and $T > 0$, we define the Toeplitz functional
$Q_{h,T}$ associated with $h$ and~$T$ by
\begin{equation*}
  Q_{h,T} := \int_{[0,T]^2} \widehat{h}(t-s) X_t X_s \diff{(s,t)}.
\end{equation*}
and denote a normalized version by
\begin{equation*}
  \widetilde{Q}_T =
  \frac{Q_{h,T} - \E \left[ Q_{h,T} \right]}{\sqrt{T}}.
\end{equation*}
In the following, we want to apply our results to sequences of random
vectors whose components are (normalized) Toeplitz functionals,
analogous to the treatment in~\cite{1196.60034} for the
one-dimensional case.

For $T >0$ and $\psi \in L^1(\R)$, the \emph{truncated
  Toeplitz operator} $B_T(\psi)$, defined on $L^2(\R)$, is given by
\begin{equation*}
  B_T(\psi)(u)(t) = \int_0^T u(x) \widehat{\psi}(t-x) \diff{x}.
\end{equation*}
As usual, if $\psi_1,\ldots,\psi_m \in L^1(R)$ and $j \geq 1$, we write
\begin{equation*}
  B_T(\psi_m) B_T(\psi_{m-1}) \cdots B_T(\psi_1)
  =
  B_T(\psi_m) \circ B_T(\psi_{m-1}) \circ \cdots  \circ B_T(\psi_1)
\end{equation*}
and 
\begin{equation*}
  \left(
  B_T(\psi_2) B_T(\psi_1) \right)^j
  =
  \underbrace{B_T(\psi_2) B_T(\psi_1) \ldots B_T(\psi_2) B_T(\psi_1)}_{j \text{ times}}
\end{equation*}
for its operator products and powers, respectively. Explicitly, the
above operator product takes the form
\begin{multline*}
  \big( B_T(\psi_m) B_T(\psi_{m-1}) \cdots B_T(\psi_1)(u) \big)(t)
  \\ =
  \int_0^T \cdots \int_0^T \int_0^T
  u(x_1)
  \widehat{\psi}_1(x_2-x_1)
  \widehat{\psi}_2(x_3-x_2)
  \cdots
  \widehat{\psi}_m(t-x_m)
  \diff{x_1}
  \diff{x_2}
  \cdots
  \diff{x_m}.
\end{multline*}

To adapt the setting to our framework, we introduce the Hilbert space
 of complex-valued, square 
integrable and even functions $h \colon \R \to \C$, equipped with the
inner product $\left\langle h_1,h_2 \right\rangle_{\mathfrak{H}} =
\int_{-\infty}^{\infty} h_1(x) \overline{h_2(x)} f(x) \diff{x}$. The
process $(X_t)_{t \geq 0}$ can then be identified with an isonormal
Gaussian process on the real subspace $\mathfrak{H}$ generated by the family
$\{ x \mapsto \me^{\mi xt} \colon t > 0 \}$. This allows us to
represent the normalized Toeplitz functional $\widetilde{Q}_{h,T}$ as
a multiple integral of second order with kernel
$\varphi_{h,T}$, which is given by 
\begin{align}
  \label{eq:74}
  \varphi_{h,T}(x_,y)
  &=
  \int_{[0,T]^2} \widehat{h}(t-s) 
  \me^{\mi (sx+ty)}
  \diff{s} \diff{t}.
\end{align}

We now choose even functions $h_1,\ldots,h_d \in L^1(\R)$ be even
functions and define a random vector $F_T=(F_{1,T},\ldots,F_{d,T})$ by
setting   $F_{i,T}=\widetilde{Q}_{h_i,T}$ for $1 \leq i \leq d$ and $T
> 0$. The following Theorem collects some results from the literature.

\begin{theorem}
  \label{thm:8}
  Let $\alpha \in \N_0^d$ be a multi-index with $\abs{\alpha}  
    \geq 2$ and elementary decomposition $\{ \,
    l_1,\ldots,l_{\abs{\alpha}} \, \}$. Then the following is true.
  \begin{enumerate}[a)]
  \item The cumulant $\kappa_{\alpha}(F_T)$ is given by
    \begin{equation}
      \label{eq:78}
    \kappa_{\alpha}(F_T)
    =
    T^{-\abs{\alpha}/2} \,
    2^{\abs{\alpha}-1} \, (\abs{\alpha}-1)! \,
    \on{Tr} \left[
      B_T(f)^{\abs{\alpha}} \prod_{i=1}^{\abs{\alpha}} B_T(h_{l_i)})
      \right].
  \end{equation}
\item If $f \in L^1(\R) \cap L^{q_0}(\R)$ and $h_{i} \in L^1(\R) \cap
  L^{q_i}(\R)$ such that $1/q_0 + 1/q_i \leq 1/\abs{\alpha}$ for $1
  \leq i \leq d$, then
  \begin{equation}
    \label{eq:79}
    \lim_{T \to \infty}
    T^{\abs{\alpha}/2-1 }\kappa_{\alpha}(F_T)
    =
    2^{\abs{\alpha}-1} (\abs{\alpha}-1)!
    \int_{-\infty}^{\infty} f^{\abs{\alpha}}(x) \prod_{i=1}^{\abs{\alpha}}
    h_{l_i}(x) \diff{x}.
  \end{equation}
\item If $f \in L^1(\R) \cap L^{q_0}(\R)$ and $h_{i} \in L^1(\R) \cap
  L^{q_i}(\R)$ such that $1/q_0 + 1/q_i \leq 1/2$ for $1
  \leq i \leq d$, then
  \begin{equation}
    \label{eq:105}
    F_T \xrightarrow{d} \mathcal{N}(0,C), \qquad (T \to \infty),
  \end{equation}
  where the covariance matrix $C=(C_{ij})_{1 \leq i,j \leq d}$ is
  given by
  \begin{equation*}
    C_{ij}= 2 \int_0^{\infty} f^2(x) h_i(x) h_j(x) \diff{x}.
  \end{equation*}
  \end{enumerate}
\end{theorem}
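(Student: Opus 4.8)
The plan is to treat the three assertions in turn, using throughout that each component $F_{i,T}=\widetilde{Q}_{h_i,T}$ is a second-order multiple integral whose kernel is $\varphi_{h_i,T}$ from~\eqref{eq:74}. For part a), the starting point is the cumulant formula~\eqref{eq:37} for vectors all of whose components live in the second chaos, which expresses $\kappa_{\alpha}(F_T)$ as a sum, over permutations of the elementary decomposition, of iterated $\otimes_1$-contractions of the kernels $\varphi_{h_{l_i},T}$ paired in $\mathfrak{H}^{\otimes 2}$. The key step is to identify, in the weighted Hilbert space $\mathfrak{H}$ (whose inner product carries the spectral density $f$), the Hilbert--Schmidt operator $g\mapsto \varphi_{h_i,T}\otimes_1 g$ with a product of truncated Toeplitz operators built from $f$ and $h_i$. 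Once this is done, each iterated contraction becomes the trace of a product of the operators $B_T(f)$ and $B_T(h_{l_i})$; since traces are invariant under cyclic permutation, the sum over permutations collapses and the symmetrizations may be dropped, exactly as was observed for the inner products~\eqref{eq:100} in the Brownian-sheet example. Collecting the constant $2^{\abs{\alpha}-1}(\abs{\alpha}-1)!$ from~\eqref{eq:37} together with the factor $T^{-\abs{\alpha}/2}$ produced by the $\sqrt T$ in the definition of $\widetilde Q$ then yields~\eqref{eq:78}; for the purely diagonal cumulants $\kappa_{m e_i}$ this reduces to the trace identity~\eqref{eq:34}.

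For part b) the task is to evaluate the asymptotics of the trace in~\eqref{eq:78}. After factoring out one power of $T$, one must show $T^{-1}\on{Tr}[\cdots]\to \int_{-\infty}^{\infty} f^{\abs{\alpha}}(x)\prod_i h_{l_i}(x)\,dx$. This is a first-order Szegő-type limit theorem for products of truncated Toeplitz operators: under the integrability hypotheses $1/q_0+1/q_i\leq 1/\abs{\alpha}$, Hölder's inequality guarantees that the $\abs{\alpha}$ factors of $f$ and the factors $h_{l_i}$ combine into an integrable product, that the relevant operator products are trace class, and that the normalized trace converges to the stated integral. I would invoke the trace asymptotics already established in the Toeplitz literature and used in the one-dimensional treatment of~\cite{1196.60034}, with the Hölder exponents chosen precisely to match $\abs{\alpha}$.

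For part c), the CLT, I would combine part b) with the Fourth Moment Theorem~\ref{thm:7}. Taking $\abs{\alpha}=2$ in~\eqref{eq:79} (this is exactly the case $1/q_0+1/q_i\leq 1/2$) shows that the covariances converge, $\kappa_{e_i+e_j}(F_T)\to C_{ij}$, where the restriction to $(0,\infty)$ and the prefactor in the statement reflect the evenness of $f,h_i,h_j$ and the $\sqrt T$-normalization. It then remains to verify the fourth-moment condition $\kappa_{4e_i}(F_T)\to 0$ for each component: the trace bound for $\abs{\alpha}=4$ shows this quantity is $o(1)$ under the same hypothesis $1/q_0+1/q_i\leq 1/2$, since here only vanishing, not the sharp constant of part b) (which would need $1/q_0+1/q_i\leq 1/4$), is required. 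By part (A) of Theorem~\ref{thm:7}, each coordinate sequence $(F_{i,T})$ is thus ACN with converging variance, whence by part (B) the whole vector $(F_T)$ is ACN; as its covariances converge to $C$, this is equivalent to $F_T\xrightarrow{d}\mathcal{N}(0,C)$, which is~\eqref{eq:105}.

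The main obstacle is the operator identification underlying part a) together with the trace estimates driving parts b) and c): establishing rigorously that the contraction operators on $\mathfrak{H}$ are the genuine Toeplitz products $B_T(f)B_T(h_i)$, and controlling their traces under sharp integrability, is the analytic heart of the argument. The combinatorial reduction via~\eqref{eq:37} and the final application of the Fourth Moment Theorem are comparatively routine once these facts are in hand.
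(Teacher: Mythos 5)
Your overall route coincides with the paper's: part a) is the classical trace representation of cumulants of Toeplitz quadratic forms (the paper adapts~\cite[Chapter 11]{MR890515} to several dimensions), part b) is a Szeg\H{o}-type trace asymptotic imported from the Toeplitz literature (the paper cites~\cite[Theorem 1a)]{ginovian_toeplitz_1994}), and part c) combines part b) with the Fourth Moment Theorem~\ref{thm:7}. Parts a) and b) of your proposal are fine in substance, with one caveat: the ``collapse'' of the permutation sum in~\eqref{eq:37} uses more than cyclic invariance of the trace. Cyclicity together with transposition identifies an ordering only with its rotations and its reversal, so for $\abs{\alpha}\geq 4$ and distinct $h_{l_i}$ the $(\abs{\alpha}-1)!$ permutation terms are not all equal; this looseness is already present in the statement of~\eqref{eq:78} and is harmless for b) and c), since all orderings of the product share the same limit, but your appeal to the product-form kernels of~\eqref{eq:100} (where order-independence holds for a special reason) does not justify it in general.

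The genuine gap is in part c), at the step where you claim $\kappa_{4e_i}(F_T)\to 0$ under $1/q_0+1/q_i\leq 1/2$ because ``only vanishing, not the sharp constant, is required.'' No bound you have set up delivers this. Part b) with $\abs{\alpha}=4$ requires $1/q_0+1/q_i\leq 1/4$, which is \emph{not} implied by the hypothesis of part c), and the soft estimate obtainable from operator norms fails exactly at the boundary: using $f\geq 0$ one has
\begin{equation*}
  \kappa_{4e_i}(F_T)
  \asymp
  T^{-2}\,\on{Tr}\left[A_T^4\right]
  \leq
  T^{-2}\,\norm{A_T}_{op}^2\,\on{Tr}\left[A_T^2\right],
  \qquad
  A_T = B_T(f)^{1/2}B_T(h_i)\,B_T(f)^{1/2},
\end{equation*}
and combining $\on{Tr}\left[A_T^2\right]\asymp T$ (part b) with $\abs{\alpha}=2$) with the standard bound $\norm{B_T(\psi)}_{op} \preccurlyeq T^{1/q}\norm{\psi}_{L^q}$ only gives $\kappa_{4e_i}(F_T) \preccurlyeq T^{2(1/q_0+1/q_i)-1}$, which is $o(1)$ when $1/q_0+1/q_i<1/2$ but merely $O(1)$ at equality. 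Making the fourth cumulant vanish under the sharp condition requires an approximation argument (truncate $f$ and $h_i$ by bounded functions and control the error in Hilbert--Schmidt and trace norms); this is precisely the analytic content of Ginovian's one-dimensional CLT, which is how the paper proceeds: it cites~\cite{ginovian_toeplitz_1994} for part c) in the case $d=1$, and then obtains the multidimensional statement from part (B) of Theorem~\ref{thm:7} together with the covariance convergence coming from part b) with $\abs{\alpha}=2$. So you should either invoke the one-dimensional CLT as a black box (as the paper does) or supply the approximation argument; as written, your justification fails in the boundary case $1/q_0+1/q_i = 1/2$.
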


\begin{proof}
  Part a) follows from a straightforward adaptation of the arguments
  in~\cite[Chapter 11]{MR890515} to multiple dimensions. Part b)
  follows from a) and~\cite[Theorem
  1a)]{ginovian_toeplitz_1994}. Finally, part~c) is a consequence of
  part b) and the Fourth Moment Theorem~\ref{thm:7}. For the one-dimensional case ($d=1$), part c) was first proven  
in~\cite{ginovian_toeplitz_1994}. Weaker conditions for the
convergence~\eqref{eq:105} to take place can be found in~\cite{MR2299719}. 
\end{proof}

We are now able to prove the following Edgeworth expansion.

\begin{proposition}
  \label{prop:5}
  In the above framework, assume that $f \in L^1(\R) \cap
  L^{q_0}(\R)$, $h_{i} \in L^1(\R) \cap   L^{q_i}(\R)$ such that 
  $1/q_0 + 1/q_i \leq 1/8$ for $1 \leq i \leq d$ and let $g \colon
  \R^d \to \R$ be three times differentiable with bounded derivatives
  up to order three. Then it holds that
  \begin{equation}
    \label{eq:35}
    \frac{
      \E \left[ g(F_T) \right]
      -
      \mathcal{E}_3(F_T,Z,g)
    }{
      \varphi_C(F_T)
    }
    \to 0
    \quad (T \to \infty),
  \end{equation}
  where $Z$ is a $d$-dimensional Gaussian random variable with
  covariance matrix $C=(C_{ij})_{1 \leq i,j \leq d}$ given by
  $C_{ij}= 2 \int_0^{\infty} f^2(x) h_i(x) h_j(x) \diff{x}$.
\end{proposition}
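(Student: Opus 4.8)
The plan is to recognize that every component $F_{i,T} = \widetilde{Q}_{h_i,T}$ lies in the second Wiener chaos by~\eqref{eq:74}, so that the dedicated second-chaos result Proposition~\ref{prop:1}b) applies and delivers precisely the conclusion~\eqref{eq:35}. Two hypotheses of that proposition must be checked: the existence of a covariance matrix $C$ with $\Delta_C(F_T) \to 0$, and the cumulant ratio~\eqref{eq:107}, $\kappa_{8e_i}(F_T)/\kappa_{4e_i}(F_T)^2 \to 0$, for every index $i$ that contributes to the overall rate. Both are read off from the cumulant asymptotics of Theorem~\ref{thm:8}b); the standing hypothesis $1/q_0 + 1/q_i \le 1/8$ is exactly the integrability needed to apply that theorem to the eighth-order cumulants $\kappa_{8e_i}(F_T)$.

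First I would settle the covariance condition. Applying Theorem~\ref{thm:8}b) with $\alpha = e_i + e_j$ (which requires only $1/q_0 + 1/q_i \le 1/2$) shows that the entries $\E[F_{i,T} F_{j,T}] = \kappa_{e_i+e_j}(F_T)$ converge to the corresponding entries of the limiting covariance matrix $C$ furnished by Theorem~\ref{thm:8}c). Since $\Delta_C(F_T)^2 = \sum_{i,j} (\E[F_{i,T} F_{j,T}] - C_{ij})^2$, this gives $\Delta_C(F_T) \to 0$ at once.

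The ratio condition is the substance of the argument. Applying Theorem~\ref{thm:8}b) with $\alpha = 4e_i$ and then $\alpha = 8e_i$ yields the two limits $T\,\kappa_{4e_i}(F_T) \to 2^3\,3!\int_{-\infty}^{\infty} f^4 h_i^4\,dx$ and $T^3\,\kappa_{8e_i}(F_T) \to 2^7\,7!\int_{-\infty}^{\infty} f^8 h_i^8\,dx$, the second being legitimate precisely because of the $1/q_0 + 1/q_i \le 1/8$ bound. When the limiting integral $\int f^4 h_i^4$ is strictly positive one has $\kappa_{4e_i}(F_T) \asymp T^{-1}$ together with $\kappa_{8e_i}(F_T) = O(T^{-3})$, hence
\begin{equation*}
  \frac{\kappa_{8e_i}(F_T)}{\kappa_{4e_i}(F_T)^2} = O(T^{-1}) \longrightarrow 0,
\end{equation*}
which is~\eqref{eq:107}. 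Feeding $\Delta_C(F_T) \to 0$ and this ratio condition into Proposition~\ref{prop:1}b) produces~\eqref{eq:35}, completing the argument.

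The step I expect to be the genuine obstacle is the degenerate case $\int f^4 h_i^4 = 0$ (equivalently $f h_i = 0$ almost everywhere), where the clean asymptotics only give $\kappa_{4e_i}(F_T) = o(T^{-1})$ and the ratio~\eqref{eq:107} becomes formally indeterminate. In this case every covariance limit $C_{ij}$ in the $i$th row vanishes and $\E[F_{i,T}^2] \to 0$, so the component contributes nothing to $\Delta_{\Gamma}(F_T)$: using $f_{i,T}\,\widetilde{\otimes}_1\,f_{i,T} = f_{i,T} \otimes_1 f_{i,T}$ for a symmetric second-chaos kernel together with~\eqref{eq:30}, the diagonal quantity $\on{Var}\Gamma_{ii}(F_T) \asymp \kappa_{4e_i}(F_T)$ is of strictly smaller order than the dominant diagonal terms. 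The only remaining way such an $i$ could enter the rate is through $\Delta_C(F_T)$, and deciding whether~\eqref{eq:107} survives in that sub-case requires the finer, non-cancelling estimates available from the exact trace representation of Theorem~\ref{thm:8}a). In the generic situation where all the integrals $\int f^4 h_i^4$ are strictly positive this difficulty does not arise and the three steps above suffice.
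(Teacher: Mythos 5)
Your proposal is correct and takes essentially the same route as the paper: the paper's entire proof is ``By Theorem~\ref{thm:8}b), we immediately verify the conditions of Proposition~\ref{thm:6}b) and obtain the result,'' and since every $F_{i,T}$ lies in the second Wiener chaos, the conditions of Theorem~\ref{thm:6} reduce exactly to those of Proposition~\ref{prop:1}b) (the paper's own remark after Theorem~\ref{thm:6} makes this identification), so your entry point is the same criterion. Your explicit verification via Theorem~\ref{thm:8}b) with $\abs{\alpha}=2,4,8$ --- using that $1/q_0+1/q_i\leq 1/8$ is precisely what legitimizes the eighth-order cumulant limit --- is exactly the content of the paper's ``immediately verify,'' and the degenerate case $\int f^4h_i^4=0$ that you flag as unresolved is passed over in silence by the paper's proof as well, so it is not a defect of your argument relative to the paper's.
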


\begin{proof}
  By Theorem~\ref{thm:8}b), we immediately verify the conditions of
  Proposition~\ref{thm:6}b) and obtain the result.
\end{proof}

\subsection{Edgeworth expansions for the Breuer-Major Theorem}
\label{s-8}

Define $B = \{B_x \colon x \geq 0 \}$ to be a fractional Brownian motion with
Hurst index $H \in 
(0,\frac{1}{2})$, i.e. a centered Gaussian process with covariance
\begin{equation*}
  \E \left[ B_xB_y \right]
  =
  \frac{1}{2} \left(
    x^{2H} + y^{2H} - \abs{x-y}^{2H}
  \right),
  \qquad
  x,y \geq 0.
\end{equation*}
For fixed $H \in (0,\frac{1}{2})$, the Gaussian space generated by $B$
can be identified with an isonormal Gaussian process $X = \{ X(h)
\colon h \in \mathfrak{H} \}$, where the real and separable Hilbert
space $\mathfrak{H}$ is the closure of the set of all $\R$-valued step
functions on $\R_{+}$ with respect to the inner product
\begin{equation*}
  \left\langle 1_{[0,x]},1_{[0,y]} \right\rangle_{\mathfrak{H}}
  :=
  \E \left[ B_xB_y \right].
\end{equation*}
In particular, we have $B_x = X(1_{[0,x]})$ for $x > 0$. For more
details on fractional Brownian motion see for
example~\cite{pecnour_mall}, \cite{MR2200233} or~\cite{nourdin_fbm_2012}. We
denote  
the covariance function of the stationary increment process $(B_{x+1} - B_x)_{x
  \geq 0}$ by
\begin{equation*}
  \rho(t) := \E \left[ B_{x+1-x}
    \left( B_{x+1+t}-B_{x+t} \right)
  \right]
  =
  \frac{1}{2} \left(
    \abs{t+1}^{2H} + \abs{t-1}^{2H} - 2 \abs{t}^{2H}
  \right).
\end{equation*}
Now choose $d$ integers $q_i \geq 2$ and, for $T>0$, define the
centered random vectors $F_T=(F_{1,T},\ldots,F_{d,T})$ by
\begin{equation}
  \label{eq:47}
  F_{i,T} =
  \frac{1}{\sqrt{T}}
  \int_0^T
  H_{q_i}(B_{u+1}-B_u)
  \diff{u}.
\end{equation}
We immediately see that the covariance matrix $C_T = (C_{ij,T})_{1 \leq
i,j \leq d}$ of $F_T$ is given by
\begin{equation*}
  C_{ij,T} =
  \E \left[
    F_{i,T} F_{j,T}
  \right]
  =
  \delta_{q_iq_j}
  \frac{q_i!}{T}
  \int_{[0,T]^2}
  \rho^{q_i}(u-v) \diff{(u,v)}
\end{equation*}
and converges to $C = (C_{ij})_{1 \leq i,j \leq d}$ for $T \to
\infty$, where
\begin{equation*}
  C_{ij} = \delta_{q_iq_j} \, q_i!
  \int_{-\infty}^{\infty} \rho^{q_i}(x) \diff{x}.
\end{equation*}

It is well known (see for example~\cite{MR716933} or~\cite{MR799146})
that for each component $F_{i,T}$ the central limit theorem
\begin{equation*}
  F_{i,T} \xrightarrow{\mathcal{L}} \mathcal{N}(0,C_{ii}),
  \qquad (T \to \infty)
\end{equation*}
holds and the Fourth Moment Theorem~\ref{thm:7}A therefore implies the
joint convergence of $F_T$ towards a centered $d$-dimensional Gaussian
random vector $Z$ with covariance $C$.
By applying our methods, we are able to derive a fluctuating and
non-fluctuating Edgeworth expansion for $F_T$, which in many cases yield exact asymptotics. 

\begin{theorem}
  \label{thm:9}
  If, in the above setting, $g$ is three times differentiable with
  bounded derivatives up to order three it holds that
  \begin{equation}
    \label{eq:73}
    \frac{
      \E \left[ g(F_T) \right]
      -
      \mathcal{E}_3(F_T,Z,g)
    }{
      1/\sqrt{T}
    }
    \to 0, \qquad (T \to \infty).
  \end{equation}

\end{theorem}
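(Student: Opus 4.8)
The plan is to reduce the statement to an application of Theorem~\ref{thm:6} in its converging-variance form~\eqref{eq:89}, and then to convert the conclusion of that theorem, which is normalized by $\varphi_C(F_T)$, into the normalization by $1/\sqrt{T}$ demanded in~\eqref{eq:73}. First I would write each component as a multiple integral: since $B_{u+1}-B_u = X(1_{[u,u+1]})$ has unit variance, the identity $I_q(h^{\otimes q}) = q!\,H_q(X(h))$ for $\norm{h}_{\mathfrak{H}}=1$ gives
\[
  F_{i,T} = I_{q_i}(f_{i,T}), \qquad f_{i,T} = \frac{1}{q_i!\,\sqrt{T}} \int_0^T 1_{[u,u+1]}^{\otimes q_i} \diff{u}.
\]
The structural observation that drives the whole argument is that each kernel $f_{i,T}$ is \emph{non-negative}, being an average of non-negative tensor products. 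By part a) of the Remark following Theorem~\ref{thm:6}, this makes the symmetrization hypothesis~\eqref{eq:24} automatic, so I would never have to compare symmetrized and unsymmetrized contractions.

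Next I would carry out the power counting, which is where all the analytic work lies. Using $\langle 1_{[u,u+1]},1_{[v,v+1]}\rangle_{\mathfrak{H}} = \rho(u-v)$, the contraction norms become
\[
  \norm{f_{i,T}\otimes_r f_{j,T}}_{\mathfrak{H}^{\otimes(q_i+q_j-2r)}}^2 = \frac{1}{(q_i!\,q_j!)^2\,T^2} \int_{[0,T]^4} \rho(u-v)^r\,\rho(u'-v')^r\,\rho(u-u')^{q_i-r}\,\rho(v-v')^{q_j-r}\,\diff{(u,v,u',v')},
\]
and the majorizing integrals $M_r(f_{i,T},s)$ of Definition~\ref{def:2} are analogous $8$-fold integrals with the weighted-graph structure of~\eqref{eq:82}. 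The key input is that for $H\in(0,\tfrac12)$ one has $\rho(t)\sim H(2H-1)\abs{t}^{2H-2}$ with $2H-2<-1$, whence $\rho\in\ell^1(\mathbb{Z})$ and therefore $\rho\in\ell^p$ for every $p\geq 1$. Discretizing and passing to difference variables, the connectedness of the associated graph leaves exactly one free translation (a factor $\asymp T$) while every remaining summation is absorbed by an $\ell^1$-factor, since all edge weights appearing in~\eqref{eq:85} and in the above integral are $\geq 1$. This yields $\norm{f_{i,T}\otimes_r f_{j,T}}^2\asymp 1/T$ and $M_r(f_{i,T},s)\asymp 1/T^3$. Consequently condition~(i) of Theorem~\ref{thm:6} holds (each contraction tends to $0$), $\Delta_{\Gamma}(F_T)\asymp 1/\sqrt{T}$, and
\[
  \frac{\sum_{r}\sum_{s} M_r(f_{i,T},s)}{\sum_r \norm{f_{i,T}\otimes_r f_{i,T}}^4} \asymp \frac{1/T^3}{1/T^2} = \frac{1}{T} \to 0,
\]
which is exactly~\eqref{eq:85}.

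Finally I would control $\Delta_C(F_T)$. Writing $C_{ij,T}-C_{ij}$ as the tail contribution $\int_{\abs{t}>T}\rho^{q_i} + \tfrac1T\int_{-T}^{T}\abs{t}\,\rho^{q_i}$, the exponent $(2H-2)q_i<-2$ (using $q_i\geq 2$) forces $\Delta_C(F_T)=O(1/T)$. Combined with the lower bound $\Delta_{\Gamma}(F_T)\succcurlyeq 1/\sqrt{T}$ (the fourth-order quantities $\on{Var}\Gamma_{ii}(F_T)\asymp\sum_r\norm{f_{i,T}\widetilde\otimes_r f_{i,T}}^2$ having strictly positive limiting lattice sums, as in the classical Breuer--Major estimates), this gives $\Delta_C(F_T)\preccurlyeq\Delta_{\Gamma}(F_T)$ and $\varphi_C(F_T)\asymp 1/\sqrt{T}$. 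Theorem~\ref{thm:6} then produces $(\E[g(F_T)]-\mathcal{E}_3(F_T,Z,g))/\varphi_C(F_T)\to 0$, and multiplying by $\sqrt{T}\,\varphi_C(F_T)=O(1)$ yields~\eqref{eq:73}.

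I expect the main obstacle to be the rigorous justification of the power-counting estimates rather than any conceptual point: one must pass carefully from the continuous $[0,T]$-integrals to the lattice sums, control the error of this discretization uniformly in $T$, and verify that the limiting constants (in particular for $\on{Var}\Gamma_{ii}(F_T)$) are strictly positive so that $\Delta_{\Gamma}(F_T)\asymp 1/\sqrt{T}$ holds with a genuine two-sided bound. The non-negativity of the kernels is what makes the plan tractable, since it removes the delicate symmetrization comparison and lets the Majorizing Lemma~\ref{lem:7} reduce all cross-contractions to the diagonal majorizing integrals $M_r(f_{i,T},s)$ estimated above.
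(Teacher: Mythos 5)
Your proposal is correct and follows essentially the same route as the paper: both reduce Theorem~\ref{thm:9} to Theorem~\ref{thm:6} by writing $F_{i,T}=I_{q_i}(f_{i,T})$, establishing $\norm{f_{i,T}\otimes_r f_{i,T}}^2\asymp 1/T$ and $M_r(f_{i,T},s)\asymp 1/T^3$ by the same power counting, and then checking $\Delta_C(F_T)\preccurlyeq\Delta_{\Gamma}(F_T)\asymp 1/\sqrt{T}$. The only minor differences are that the paper verifies~\eqref{eq:24} directly (symmetrization merely permutes exponents in the integrand) instead of invoking the non-negativity remark, and your tail estimate $\Delta_C(F_T)=O(1/T)$ is actually sharper and more careful than the paper's stated rate $T^{1+2(H-1)q_{\text{min}}}$ (which ignores the $O(1)$ contribution of the tail integral near $v=0$), though either bound suffices for the conclusion.
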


\begin{remark}
  For a non-trivial application of 
  Theorem~\ref{thm:9} at least one of the integers $q_i$ should be
  even. Indeed, otherwise the   Edgeworth expansion
  $\mathcal{E}_3(F_T,Z_T,g)$ (or $\mathcal{E}_3(F_T,Z,g)$,
  respectively) would merely reduce to the expectation $\E \left[
    g(Z_T) \right]$ (or $\E \left[ g(Z) \right]$). 
\end{remark}

\begin{proof}[Proof of Theorem~\ref{thm:9}].
We want to apply Theorem~\ref{thm:6}.
Observe that by the well-known relation $H_{q_i}(B_{u+1}-B_u) =
I_q(1^{\otimes q_i}_{[u,u+1]})$ we can represent each component
$F_{i,T}$ by a multiple integral $I_{q_i}(f_{i,T})$, where the kernels
$f_{i,T}$ are given by   
\begin{equation*}
  f_{i,T} = \frac{1}{\sqrt{T}}
  \int_0^T
  1^{\otimes q}_{[u,u+1]}
  \diff{u}.  
\end{equation*}
Straightforward calculations now yield
\begin{align*}
  &\norm{f_{i,T} \otimes_r f_{i,T}}_{\mathfrak{H}^{\otimes 2(q_i-r)}}^2
  \\ &\qquad = 
  \frac{1}{T^2}
  \int_{[0,T]^4}
  \rho(v_1-u_1)^r \rho(v_2-u_2)^r
    \\ &\qquad \qquad\qquad\qquad\qquad
  \rho(v_2-v_1)^{q_i-r} \rho(u_2-u_1)^{q_i-r}
  \diff(u_1,u_2,v_1,v_2)
  \\ &\qquad \asymp
  \frac{1}{T}
  \int_{[-T,T]^3}
  \rho(x_1)^r \rho(x_2)^r \rho(x_3)^{q_i-r}
  \rho(x_1-x_2+x_3)^{q_i-r}
  \diff(x_1,x_2,x_3),
\end{align*}
where the integral in the last line, which has been obtained by a
change of variables, converges for $T \to \infty$ (due to the
restriction $H \in (0,1/2)$). As symmetrizing only changes the
exponents of the factors of the integrand, we see that 
\begin{equation}
  \label{eq:50}
  \norm{f_{i,T} \otimes_r f_{i,T}}_{\mathfrak{H}^{\otimes 2(q_i-r)}}
  \asymp
  \norm{f_{i,T} \widetilde{\otimes}_r f_{i,T}}_{\mathfrak{H}^{\otimes
      2(q_i-r)}}
  \asymp
  \frac{1}{\sqrt{T}}.
\end{equation}
Moreover, for $1 \leq r \leq q_i-1$ and $1 \leq m \leq q_i-r-1$, we
see that the majorizing integrals $M_r(f_{i,T},s)$ are given
by \begin{align*} 
  M_r(f_{i,T},s)
  &=
  \frac{1}{T^4}
  \int_{[0,T]^8}^{}
  \big(
    \rho(v_1-u_1) \rho(v_2-u_2) \rho(v_3-u_3) \rho(v_4-u_4)
    \big)^r
  \\ &\qquad \qquad \qquad
  \big(
    \rho(u_2-u_1) \rho(v_2-v_1) \rho(u_4-u_3) \rho(v_4-v_3)
    \big)^s
    \\ &\qquad \qquad \qquad
  \big(
    \rho(u_3-u_1) \rho(v_3-v_1) \rho(u_4-u_2)
    \rho(v_4-v_2)
    \big)^{q_i-r-s}
    \\ &\qquad \qquad \qquad
    \diff{(u_1,\ldots,u_4,v_1,\ldots,v_4)}
    \\ &\asymp
  \frac{1}{T^3}
  \int_{[-T,T]^7}^{}
  \big(
    \rho(x_1) \rho(x_2) \rho(x_3) \rho(x_4)
    \big)^r
  \\ &\qquad \qquad \qquad
  \big(
    \rho(x_5) \rho(x_2+x_5-x_1) \rho(x_6) \rho(x_4+x_6-x_3)
    \big)^s
    \\ &\qquad \qquad \qquad
  \big(
  \rho(x_7) \rho(x_3+x_7-x_1) \rho(x_6+x_7-x_5)
  \\ &\qquad \qquad \qquad \qquad \qquad \qquad
    \rho(x_4+x_6+x_7-x_5-x_2)
    \big)^{q_i-r-s}
    \\ &\qquad \qquad \qquad
    \diff{(x_1,\ldots,x_7)}.
\end{align*}
By the same argument as before, the integral converges for $T \to
\infty$ and we get
\begin{equation}
  \label{eq:51}
  M_r(f_{i,T},s) \asymp \frac{1}{T^3}.
\end{equation}
Together with the asymptotic relation~\eqref{eq:50}, this shows that
conditions conditions (i) and (ii) of Theorem~\ref{thm:6} are
satisfied. It remains to show that
  $\Delta_C(F_T) \preccurlyeq \Delta_{\Gamma}(F_T)$, so that~\eqref{eq:73} follows.
  by Theorem~\ref{thm:6}.  A linear change of
  variables allows us to write
  \begin{equation*}
    C_{ij,T} - C_{ij} =
    \delta_{q_iq_j}
    \frac{q_i!}{T}
    \int_0^T
    \int_{(-\infty,-v) \cup (T-v,\infty)}^{}
    \rho^{q_i}(w) \diff{w} \diff{v}.
  \end{equation*}
  Using the well known asymptotic relation $\rho(t) \asymp
  t^{2(H-1)}$, we get 
  \begin{equation*}
    C_{ij,T} - C_{ij} \asymp \delta_{q_iq_j}
  T^{2(H-1)q_i+1}
  \end{equation*}
  and thus
 \begin{equation*}
  \Delta_C(F_T) \asymp T^{1+2(H-1) q_{\text{min}}}.
\end{equation*}
 As, by~\eqref{eq:50}, $\Delta_{\Gamma}(F_T) \asymp 1/\sqrt{T}$ and
 $1+2(H-1)q_{\text{min}} < -1/2$, the proof is finished.
\end{proof}

\section*{Acknowledgements}
\label{s-19}
The author heartily thanks Giovanni Peccati for many fruitful discussions and
remarks, Denis Serre for providing the elegant example in
Remark~\ref{s-foll-expl-example} and an anonymous referee for the other elegant 
example hinted at in the same remark and for pointing out several mistakes in a
previous version. 

\bibliographystyle{amsalpha}
\bibliography{refs}

\end{document}